\newcommand{\Vol}{\text{Vol}}
\newcommand{\diam}{\text{diam}}
\newcommand{\N}{\mathbb{N}}
\newcommand{\R}{\mathbb{R}}
\newcommand{\cB}{\mathcal{B}}
\newcommand{\cC}{\mathcal{C}}
\newcommand{\cF}{\mathcal{F}}
\newcommand{\cH}{\mathcal{H}}
\newcommand{\cM}{\mathcal{M}}
\newcommand{\cS}{\mathcal{S}}
\newcommand{\cW}{\mathcal{W}}
\newcommand{\floor}[1]{\lfloor #1 \rfloor}
\newcommand{\ceil}[1]{\lceil #1 \rceil}
\newcommand{\norm}[1]{\left\|#1\right\|}
\newcommand{\ps}[2]{\left\langle#1\middle\vert#2\right\rangle}
\newcommand{\ton}[1]{\left(#1\right)}
\newcommand{\qua}[1]{\left[#1\right]}
\newcommand{\cur}[1]{\left\{#1\right\}}
\newcommand{\abs}[1]{\left|#1\right|}
\newcommand{\wto}{\rightharpoonup}
\newcommand{\dive}[1]{\operatorname{div}\ton{#1}}
\begin{document}

\newtheorem{theorem}{Theorem}[section]
\newtheorem{ctheorem}[theorem]{Conjectural Theorem}

\newtheorem{proposition}[theorem]{Proposition}

\newtheorem{lemma}[theorem]{Lemma}
\newtheorem{clemma}[theorem]{Conjectural Lemma}

\newtheorem{corollary}[theorem]{Corollary}

\theoremstyle{definition}
\newtheorem{definition}[theorem]{Definition}

\theoremstyle{remark}
\newtheorem{remark}[theorem]{Remark}

\theoremstyle{remark}
\newtheorem{example}[theorem]{Example}

\theoremstyle{remark}
\newtheorem{note}[theorem]{Note}

\theoremstyle{definition}
\newtheorem{notation}[theorem]{Notation}

\theoremstyle{remark}
\newtheorem{question}[theorem]{Question}

\theoremstyle{remark}
\newtheorem{conjecture}[theorem]{Conjecture}

\title{Quantitative regularity for p-harmonic maps}

\author{Aaron Naber\footnote{Northwestern University (USA) anaber@math.northwestern.edu}, Daniele Valtorta\footnote{EPFL (CH) daniele.valtorta@epfl.ch} and Giona Veronelli \footnote{Universit\'e Paris 13, Sorbonne Paris Cit\'e, LAGA, CNRS (UMR 7539) (EU) veronelli@math.univ-paris13.fr}}

\date{\today}
\maketitle
\begin{abstract}
In this article, we study the regularity of minimizing and stationary $p$-harmonic maps between Riemannian manifolds. The aim is obtaining Minkowski-type volume estimates on the singular set $\cS(u)=\cur{x \ \ s.t. \ \ u \text{ is not continuous at } x}$, as opposed to the weaker and non quantitative Hausdorff dimension bounds currently available in literature for generic $p$. 

The main technique used in this paper is the quantitative stratification, which is based on the study of the approximate symmetries of the tangent maps of $u$. In this article, we generalize the study carried out in \cite{ChNa2} for minimizing $2$-harmonic maps to generic $p\in (1,\infty)$. Moreover, we analyze also the stationary case where the lack of compactness makes the study more complicated.

In order to understand the degeneracy intrinsic in the behaviour of stationary maps, we study the defect measure naturally associated to a sequence of such maps and generalize the results obtained in \cite{lin_stat}.

By using refined covering arguments, we also improve the estimates in the case of isolated singularities and obtain a definite bound on the number of singular points. This result seems to be new even for minimizing $2$-harmonic maps.
\end{abstract}

\tableofcontents

\clearpage

\section{Introduction}
In this article, we study the regularity of minimizing and stationary $p$-harmonic maps between Riemannian manifolds, for $p\in (1,\infty)$. That is, given two compact Riemannian manifolds $M$ and $N$, where $N$ has empty boundary, we consider the critical points of the functional
\begin{gather*}
 E_p(u)= \int_M \abs{\nabla u}^p\, ,
\end{gather*}
and focus on the local regularity of $u$. The singular set of such a function is defined as 
\begin{gather*}
 \cS(u)=\cur{x\in M \ \ s.t. \ \ u \text{ is not continuous at } x}\, .
\end{gather*}
Similar to the $2$-harmonic case \cite{lin_stat} we will introduce the notion of a defect measure for limits of such mappings.  We will use this in conjunction with the quantitative stratification technique to prove effective Minkowski-type estimates not only on $\cS(u)$, but also on the regularity scale of $u$ (see Definition \ref{deph_regscale}), which roughly speaking controls the regularity of $u$ in a neighborhood of every point. As a corollary we obtain sharp integrability conditions for $\nabla u$.  See Theorems \ref{th_main} and \ref{th_main_m} for complete statements.

\subsection{Definitions and Notation} For the reader's convenience, we recall the standard definitions of $p$-harmonic maps. Let $M$ and $N$ be two smooth compact Riemannian manifolds, $N$ without boundary, and $M$ of dimension $m$. We will always assume that $N$ is isometrically embedded in some Euclidean space $\R^n$ (note that $n$ is not the dimension of $N$), and we will denote by $W^{1,p}(M,N)$ the Sobolev space of maps $u\in W^{1,p}(M,\R^n)$ such that $u(x)\in N$ a.e. in $M$. A map $u\in W^{1,p}(M,N)$ is said to be a weakly $p$-harmonic map if it (weakly) satisfies the equation
\begin{gather*}
 \Delta_p(u)=\dive{\abs{\nabla u}^{p-2} \nabla u} = -\abs{\nabla u}^{p-2} II(u)(\nabla u,\nabla u)\, ,
\end{gather*}
where $II$ is the second fundamental form of $N$. Equivalently, such a map has the property that for every smooth vector field $\xi:M\to \R^n$ with compact support
\begin{gather*}
 \left.\frac{d}{dt}\right\vert_{t=0} \int_M \abs{\nabla \ton{\Pi(u+t\xi)}}^p =0\, ,
\end{gather*}
where $\Pi$ is the nearest point projection on $N$ defined on a tubular neighborhood of the manifold inside the ambient Euclidean space. If in addition, $u$ is a critical point with respect to variations in its domain of definition, then it is called a \textit{stationary} $p$-harmonic map. In particular, a stationary map is a weakly $p$-harmonic map satisfying
 \begin{gather*}
  \left.\frac{d}{dt}\right\vert_{t=0} E_p(u(\exp_{x}(t\chi(x))))=\left.\frac{d}{dt}\right\vert_{t=0} \int_{M} \abs{\nabla u(\exp_{x}(t\chi(x)))}^p dV =0\, 
 \end{gather*}
for all smooth compactly supported $\chi:M\to TM$. Here by $\exp_{x}(\cdot)$ we mean the exponential map centered at $x$ which sends $T_x(M)$ into $M$. If $M\subset \R^m$, then evidently $\exp_{x}(t\chi(x))=x+t\chi(x)$. Note that a weakly $p$-harmonic map in $C^1(M,N)$ is necessarily stationary.
 
Finally, we define $u$ to be a minimizing $p$-harmonic map, or more simply a $p$-minimizer, if $u$ minimizes the $p$-energy in the class of $W^{1,p}$ maps with the same trace on $\partial M$.

An important tool in the study of such maps is the normalized energy, defined as
\begin{gather*}
 \theta_u(x,r)= r^{p-m} \int_{B_r(x)} \abs{\nabla u}^p\, .
\end{gather*}
This quantity turns out to be monotone (or almost monotone) for stationary maps.

Throughout the paper, we will use the standard notation $\floor p$ to denote the integral part of a real number, i.e., the biggest integer $\leq p$.

\subsection{Background}
The regularity of $p$-harmonic maps has been extensively studied in literature, in particular when $p=2$. One should also be careful in separating the minimizing and the stationary case. Note that by Sobolev embedding $u$ is continuous if $p>m$, making $p\leq m$ the only interesting case.

In \cite{SU} it was proved that the singular set $\cS(u)$ for $2$-minimizers has Hausdorff dimension at most $m-3$, and outside the singular set the map $u$ is actually smooth. Their proof is based on a dimension reduction argument and on an important $\epsilon$-regularity theorem according to which if $\theta(x,2r)<\epsilon(m,N)$, then $u$ is smooth on $B_{r}(x)$. Additionally, under the additional assumption that there exist no continuous $2$-minimizers $u:S^i\to N$ for $i=2,\cdots,k$, they can improve the Hausdorff dimension estimates to $m-k-2$.  

For generic $p\neq 2$, the situation is similar, although in this case the lack of uniform ellipticity makes $C^{1,\alpha}$ estimates the best regularity one can hope for, as opposed to smooth estimates. Indeed, in \cite{pHL} the authors extend the $\epsilon$-regularity theorem to this case and prove that $\cS(u)$ is a set of Hausdorff dimension $\leq m-\floor p -1$ outside of which $u$ is $C^{1,\alpha}$.

More recently, in \cite{ChNa2} the Hausdorff dimension estimates of \cite{SU} were improved to Minkowski dimension estimates in the $p=2$ case.  Indeed, the estimates of \cite{ChNa2} allow for the first $L^q$ estimates on the gradient and Hessian of solutions to be proved, and more importantly the first $L^q$ estimates on the regularity scale of solutions.  In particular, given a $2$-minimizer $u:B_2(0)\to N$ with $\int_{B_2(0)} \abs{\nabla u}^2 \leq \Lambda$, \cite{ChNa2} shows that for every $\epsilon>0$
\begin{gather}\label{eq_1}
 \Vol(B_r(\cS(u) \cap B_{1}(0)))\leq C(m,N,\Lambda,\epsilon)r^{3-\epsilon}\, .
\end{gather}
The key new ingredient for the proof in \cite{ChNa2} was the introduction of the quantitative stratification.

The goal of this paper is to introduce the quantitative stratification techniques to the generic $p$ context, and to use these results to prove similar effective estimates for $p$-harmonic maps between Riemannian manifolds.  To do this it will be necessary for us to develop the notion of a defect measure, which will allow us to study limits of $p$-harmonic maps.

Indeed, note that many arguments in the proofs of these results rely on some compactness properties enjoyed by the family of $p$-minimizers.  That is, if a sequence $u_i$ of $p$-minimizers converges weakly in the $W^{1,p}$ sense to some $u$, then the convergence is actually strong and $u$ is a $p$-minimizer (see \cite{luck} or \cite[section 2.9]{simon}).

Stationary maps do not enjoy this compactness property, and thus are in general worse behaved than minimizing ones.  Regardless Bethuel proved in \cite{beth} an $\epsilon$-regularity theorem for stationary $2$-harmonic maps.  This makes it possible to estimate that $\cH^{m-2}(\cS(u))=0$. A sharp estimate in this case is still an interesting open problem.

The technique used by Bethuel is difficult to generalize for arbitrary $p$, and in fact a full-blown $\epsilon$-regularity theorem is not available in literature. To the best of our knowledge, the most general result is the one in \cite{torowang}, which assumes that the target space $N$ is a homogeneous space with a left invariant metric. In this case, the authors are able to generalize the $\epsilon$-regularity theorem and obtain as a corollary that $\cH^{m-p}(\cS(u))=0$, where, as in the minimizing case, $u$ is $C^{1,\alpha}$ outside of its singular set. 

Just as Bethuel's result, this result is based on the duality between BMO and Hardy spaces, and on a special choice of gauge which allow to exploit this duality to conclude a polynomial decay for $\theta(x,r)$ when $\theta(x,1)\leq \epsilon$. However, when $p\neq 2$, finding this gauge presents nontrivial technical difficulties, which are easily overcome if the target space has some special structure.

Note that similar results are available when $N$ is a round sphere, see for example \cite{p-fuchs}, \cite{take_eps}, \cite{strze_eps}, \cite{mouya}, \cite{strze_eps_revenge}, \cite{Rtoc}.

Regarding the lack of compactness for stationary maps, an interesting study has been carried out in \cite{lin_stat} when $p=2$. Given a $W^{1,2}$ weakly convergent sequence of stationary maps $u_i\wto u$, one can define
\begin{gather*}
 \abs{\nabla u_i}^2dx \wto \abs{\nabla u}^2dx +\nu\, ,
\end{gather*}
where the convergence is in the weak sense of measures. The nonnegative measure $\nu$ is the \textit{defect measure}, and it is clear that $u_i$ converges strongly in $W^{1,2}$ to $u$ if and only if $\nu$ is null. In \cite{lin_stat}, the author studies the measure-theoretical properties of the measure $\nu$, focusing in particular on its relation with the $n-2$ Hausdorff measure and its rectifiability, and via dimension reduction arguments he is able to prove that if such a measure exists, then there exists also a smooth nonconstant stationary $2$-harmonic map $h:S^2\to N$. Thus in case such a map did not exist, stationary maps would enjoy the same compactness properties of minimizers, and thus also the same regularity properties. This fact is used in \cite[corollary 1.26]{chhana} to prove an estimate similar to \eqref{eq_1} for $2$-stationary maps.

In this paper we will similarly introduce the defect measure for limits of stationary $p$-harmonic maps, and we will see it enjoys all the same properties enjoyed by the defect measure for $2$-harmonic maps.  We will use it as in \cite[corollary 1.26]{chhana} to give regularity estimates for some stationary harmonic maps.


\subsection{Main Results}
In this article, we generalize the quantitative stratification technique introduced in \cite{ChNa2} to generic $p\in (1,\infty)$, and use it to obtain regularity estimates for both minimizers and stationary maps.  To do this we introduce and study the {\it defect measure} associated to a sequence of stationary $p$-harmonic maps.

For the sake of convenience, we will assume that the base manifold $M$ is a smooth Riemannian manifold with
\begin{gather}\label{eq_Mbounds}
 \abs{\operatorname{sec}(M)}\leq 1 \, , \quad \quad \operatorname{inj}(M)\geq 2\, .
\end{gather}

Before stating the results, we define two conditions on the target manifold $N$ under which we will be able to obtain improved regularity results.

\begin{definition}
 We say that a compact manifold $N$ satisfies condition \eqref{A} if
\begin{gather}\label{A}\tag{A}
 \not \exists \text{ nonconstant continuous $p$-minimizing maps  } u:S^i\to N \ \ \ i=\floor p, \cdots, a\, .
\end{gather}
 We say that a compact manifold $N$ satisfies condition \eqref{B} if
\begin{gather}\label{B}\tag{B}
 \not \exists \text{ nonconstant continuous $p$-stationary maps  } u:S^i\to N \ \ \ i=\floor p, \cdots, b\, .
\end{gather} 


\end{definition}

%

\subsubsection{Results for Minimizers}

In the minimizing case, by combining the quantitative stratification with the $\epsilon$-regularity theorem in \cite{pHL} we obtain the following Minkowski-type estimates 
\begin{theorem}\label{th_main}
 Let $u$ be a $p$-minimizing map $u:B_2(0)\subset M\to N$, where $N$ is compact (without boundary) and
 \begin{gather*}
  \int_{B_2(0)}\abs{\nabla u}^p dV \leq \Lambda\, .
 \end{gather*}
If $m\geq \floor{p}+1$, then for every $\eta>0$, there exists a constant $C(m,N,\Lambda,p,\eta)$ such that  for every $r\geq 0$
\begin{gather*}
 \Vol\ton{B_r(\cS(u))\cap B_{1}(0)}\leq C r^{\floor{p}+1-\eta}\, .
\end{gather*}
Under the additional assumption \eqref{A}, we can improve the result to
\begin{gather*}
 \Vol\ton{B_r(\cS(u))\cap B_{1}(0)}\leq C r^{a+2-\eta}\, .
\end{gather*}
\end{theorem}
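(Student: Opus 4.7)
The plan is to reduce the theorem to the Minkowski estimate for the quantitative stratification $\cS^k_{\eta,r}(f)$ and then invoke the $\epsilon$-regularity theorem of \cite{pHL} to identify which stratum contains $\cS(f)$. Concretely, one defines for each integer $k$ and parameters $\eta,r>0$ the set
\begin{gather}
 \cS^k_{\eta,r}(f)=\cur{x\in B_{1/2}(0)\ :\ \text{no ball }B_s(x),\ s\in[r,1),\ \text{is }(k+1,\eta)\text{-symmetric}}\, ,
\end{gather}
where $(k+1,\eta)$-symmetric is measured in the natural $L^p$-sense against homogeneous maps invariant under a $(k+1)$-plane, with symmetry detected by the drop of the normalized energy $\theta_f$. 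The first step is to adapt the cone-splitting/quantitative rigidity arguments of \cite{ChNa2} to $p$-harmonic maps: using the almost-monotonicity of $\theta_f$ and compactness of normalized minimizers in $W^{1,p}$, one shows that a ball with small energy drop is quantitatively close to a homogeneous map, and that a ball with several independent approximate symmetries is itself approximately $(k+1,\eta)$-symmetric around some $(k+1)$-plane. This yields the Minkowski-type bound
\begin{gather}
 \Vol\ton{B_r\ton{\cS^k_{\eta,r}(f)}\cap B_{1/2}(0)}\leq C(m,N,\Lambda,p,\eta)\, r^{m-k-\eta}\, .
\end{gather}

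Next, I would use the $\epsilon$-regularity theorem of \cite{pHL}: there exists $\epsilon(m,N,p)>0$ such that if $\theta_f(x,2r)<\epsilon$ then $f\in C^{1,\alpha}(B_r(x))$. Combined with the observation that any tangent map to a $p$-minimizer is a homogeneous $p$-minimizer with singular set of dimension at most $m-\floor p-1$, a standard dimension-reduction argument shows that if $B_s(x)$ is $(m-\floor p,\eta)$-symmetric for some $s\geq r$ with $\eta$ chosen small, then $x$ is a regularity point at scale $r$. Equivalently,
\begin{gather}
 \cS(f)\cap B_{1/2}(0)\subset \cS^{m-\floor p-1}_{\eta,0}(f)\, .
\end{gather}
Plugging $k=m-\floor p-1$ into the stratification estimate, passing to the limit $r\to 0$ along dyadic scales, and using a standard covering to turn the $r$-tube bound for $\cS^k_{\eta,r}$ into a tube bound for $\cS(f)$, gives $\Vol(B_r(\cS(f))\cap B_{1/2}(0))\leq C r^{\floor p +1-\eta}$ (after renaming $\eta$).

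For the improved bound under assumption \eqref{A}, the point is that \eqref{A} rules out nonconstant homogeneous $p$-minimizers from $\R^{i+1}$ with an isolated singularity for $i=\floor p,\dots, a$. By dimension reduction applied to tangent maps along the stratification, this forces any $p$-minimizing tangent map whose singular set has dimension at most $m-a-2$ to have been already detected as symmetric, so that in fact
\begin{gather}
 \cS(f)\cap B_{1/2}(0)\subset \cS^{m-a-2}_{\eta,0}(f)\, ,
\end{gather}
and applying the quantitative stratification estimate with $k=m-a-2$ yields the bound $Cr^{a+2-\eta}$.

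The main obstacle is step one: establishing the cone-splitting and quantitative rigidity underlying the Minkowski estimate for $\cS^k_{\eta,r}$ in the $p$-harmonic setting. The strong $W^{1,p}$ compactness of minimizers (from \cite{luck}) is exactly what makes this adaptation feasible; the rest of the argument is then parallel to \cite{ChNa2} modulo replacing smoothness of the regular set by $C^{1,\alpha}$, which is enough both for the $\epsilon$-regularity input and for converting regularity-scale estimates into the stated volume bound on $\cS(f)$.
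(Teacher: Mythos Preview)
Your plan is correct and follows essentially the same route as the paper: prove the Minkowski estimate for $\cS^k_{\eta,r}$ via cone-splitting and quantitative rigidity (using strong $W^{1,p}$ compactness of minimizers), then upgrade the $\epsilon$-regularity of \cite{pHL} to show $\cS(f)\subset \cS^{m-\floor p-1}_{\eta,r}$ (respectively $\cS^{m-a-2}_{\eta,r}$ under \eqref{A}) for every $r$, after which the tube bound follows by plugging in $k=m-\floor p-1$ (respectively $k=m-a-2$). The only superfluous step is your ``passing to the limit $r\to 0$ along dyadic scales'': since the inclusion $\cS(f)\subset\cS^{m-\floor p-1}_{\eta,r}$ already holds at each fixed scale $r$, the estimate $\Vol(B_r(\cS(f)))\leq \Vol(B_r(\cS^{m-\floor p-1}_{\eta,r}))\leq C r^{\floor p+1-\eta}$ is immediate without any limiting procedure or additional covering.
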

As a corollary of the proof, we will obtain the following integrability properties.
\begin{corollary}
 Under the hypotheses of the previous theorem, for all $\epsilon>0$, $\nabla u\in L^{\floor{p}+1-\epsilon}(B_1(0))$ with
 \begin{gather*}
  \int_{B_1(0)} \abs{\nabla u}^{\floor{p}+1-\epsilon}\leq C(m,\Lambda,N,p,\epsilon)\, .
 \end{gather*}
Moreover, under the additional assumption \eqref{A}, $\nabla u\in L^{a+2-\epsilon}(B_1(0))$ with
\begin{gather*}
 \int_{B_1(0)} \abs{\nabla u}^{a+2-\epsilon}\leq C(m,\Lambda,N,p,\epsilon)\, .
\end{gather*}
\end{corollary}

In the borderline case $m=\floor p +1$, it is known that the singularities are isolated (see for example \cite{SU,pHL}). Using a refined covering argument, we are able to improve the previous estimate to an effective finiteness of the number of singularities for the map $u$. This result appears to be new even if $p=2$.
\begin{theorem}\label{th_main_m}
 Let $u$ be a $p$-minimizing map $u:B_2(0)\subset M\to N$, where $N$ is compact (without boundary) and
 \begin{gather*}
  \int_{B_2(0)}\abs{\nabla u}^p dV \leq \Lambda\, .
 \end{gather*}
Suppose that $m= \floor{p}+1$ or that, under the additional assumption \eqref{A}, $m=a+2$. Then there exists a constant $C(p,N,\Lambda)$ such that
 \begin{gather*}
  \# \cS(u)\cap B_{1}(0)\leq C(p,\Lambda,N)\, .
 \end{gather*}
\end{theorem}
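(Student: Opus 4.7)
The plan is to upgrade the Minkowski volume estimate of Theorem~\ref{th_main} to an effective cardinality bound. Theorem~\ref{th_main} alone is insufficient: in the borderline case its decay exponent $m-\eta$ only bounds the number of balls of radius $r$ covering $\cS(f)\cap B_{1/2}$ by $Cr^{-\eta}$, which diverges as $r\to 0$. The strategy is therefore a compactness--contradiction argument which promotes the qualitative isolation of singularities known in the borderline case (from \cite{SU,pHL}) into a uniform quantitative separation, followed by a Vitali packing.

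Assume for contradiction that there is a sequence of $p$-minimizers $f_j:B_1(0)\to N$ with $\int_{B_1}\abs{\nabla f_j}^p\leq\Lambda$ and $N_j:=\#\ton{\cS(f_j)\cap B_{1/2}}\to\infty$. Let $r_j$ be the minimal pairwise distance between points of $\cS(f_j)\cap B_{1/2}$; a disjoint-ball volume packing shows $r_j\to 0$, since otherwise $N_j$ would be bounded by $C(m) r_j^{-m}$. Pick a minimizing pair $x_j,y_j\in\cS(f_j)\cap B_{1/2}$ and rescale via $g_j(z):=f_j(x_j+r_jz)$, defined on $B_{1/(2r_j)}(0)$. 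Monotonicity of the normalized energy gives $\theta_{g_j}(0,R)=\theta_{f_j}(x_j,r_jR)\leq 2^{m-p}\Lambda$ on each fixed $B_R(0)\subset\R^m$ for $j$ large, so by the strong $W^{1,p}$--compactness of $p$-minimizers (cf.~\cite{luck} and \cite{simon}) a subsequence converges strongly to a $p$-minimizer $g_\infty:\R^m\to N$. Applying the $\epsilon$-regularity theorem of \cite{pHL} through the strong convergence, both $0$ and $z_\infty:=\lim(y_j-x_j)/r_j\in\partial B_1(0)$ belong to $\cS(g_\infty)$.

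The contradiction is extracted from the rigidity of $g_\infty$. Federer-type dimension reduction applied to the tangent maps of $g_\infty$ (refined using \eqref{A} when applicable) yields $\dim_{\cH}\cS(g_\infty)\leq m-\floor{p}-1=0$ (resp.\ $m-a-2=0$), so $\cS(g_\infty)$ is discrete; this alone does not rule out $\{0,z_\infty\}\subset\cS(g_\infty)$, so additional structure is required. I would invoke a cone-splitting argument: the tangent maps of $g_\infty$ at $0$ and at $z_\infty$ are both homogeneous of degree $0$ with respect to their centers, and an iterated blow-up / diagonal scheme produces a further rescaled limit $\tilde g_\infty$ which is genuinely $1$-symmetric along the direction $z_\infty$. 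Since $\theta\geq\epsilon_0$ is preserved at the singular centers under blow-up, the $1$-symmetry propagates this density condition along the entire spine, forcing $\cS(\tilde g_\infty)$ to contain a line and contradicting the $0$-dimensional bound. This establishes a uniform quantitative separation $|x-y|\geq r_0(p,N,\Lambda)$ for distinct singular points of any admissible $f$, and a final Vitali packing in $B_{1/2}$ yields $\#\ton{\cS(f)\cap B_{1/2}}\leq C(m)r_0^{-m}$.

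The main obstacle is the cone-splitting step: $g_\infty$ itself is not a priori $0$-symmetric at $0$ or $z_\infty$, only its tangent maps are, so turning two infinitesimal $0$-symmetries into a macroscopic $1$-symmetry of a limiting object requires a careful diagonal blow-up together with the quantitative symmetry and almost-monotonicity estimates developed earlier in the paper. Under condition \eqref{A} one must additionally propagate the hypothesis ruling out nonconstant continuous $p$-minimizing maps $S^i\to N$ for $i\leq a$ through the iterated tangent construction, which is precisely what sharpens the dimension bound to $0$ in the borderline case $m=a+2$ rather than $m=\floor{p}+1$.
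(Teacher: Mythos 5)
There is a genuine gap, and it sits exactly where you flag your ``main obstacle'': the cone-splitting step does not work, and the uniform separation it is supposed to deliver is neither proved by you nor asserted (nor needed) by the paper. Your blow-up at the minimal-separation scale is fine as far as it goes: you obtain an entire $p$-minimizer $g_\infty$ with bounded normalized energy and two singular points $0,z_\infty$ at distance $1$ (persistence of singularities via $\epsilon$-regularity and strong convergence is correct). But this configuration is not by itself contradictory, and your proposed upgrade to a $1$-symmetric limit is not a legitimate use of cone-splitting. Proposition \ref{prop_conespl} (and its qualitative model, Remark \ref{rem_khom}) requires a \emph{single map} that is (almost) $0$-symmetric about two reasonably separated points \emph{at comparable, macroscopic scales}; equivalently, by Theorem \ref{th_qrig}, it requires energy pinching $\theta(x,r)-\theta(x,\chi r)\leq\delta$ at those scales. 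What you actually have is only that the \emph{tangent maps} of $g_\infty$ at $0$ and at $z_\infty$ are separately homogeneous: in any blow-up at $0$ fine enough to see homogeneity, the point $z_\infty$ has escaped to infinite rescaled distance, and vice versa. No diagonal scheme merges these two infinitesimal symmetries into one map with two symmetry points at unit distance, because nothing forces $\theta_{g_\infty}(0,1)-\theta_{g_\infty}(0,0)$ to be small --- this pinching can be of definite size, and then there is simply no almost-symmetry at scale $1$ to split. (In very rigid special cases, e.g.\ $p=2$, $N=S^2$, one can exclude a second singularity of an entire minimizer through the density rigidity of Brezis--Coron--Lieb, but the theorem is for arbitrary compact $N$, where the density spectrum admits multiple values and that argument collapses.) Consequently your asserted uniform separation $\abs{x-y}\geq r_0(p,N,\Lambda)$ between singular points is unestablished; the paper's proof is structured precisely so as \emph{not} to need it, because singular points may a priori cluster at small distances.

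The paper's actual argument replaces the separation bound by a count of \emph{scales}. First, a compactness lemma (the borderline analogue of your isolation step): if $f$ is $(0,\eta,r,x)$-symmetric with bounded energy, then $f$ has no singular points in the annulus $B_{r/2}(x)\setminus B_{r/4}(x)$, since a limit would be a homogeneous minimizer with an off-center singularity, impossible when $m=\floor{p}+1$ (or $m=a+2$ under \eqref{A}). Second, by monotonicity and pinching exactly as in Lemma \ref{lemma_K}, each fixed point $x$ has at most $C(p,\Lambda,N)$ ``bad'' dyadic scales at which this symmetry fails, hence at most $C$ dyadic annuli around $x$ that can contain singular points. Third, a multiscale induction: at each dyadic step the singular set is covered by at most $c(m)$ balls; either all singular points concentrate in one ball (no cost), or the maximal ball loses at most a factor $c(m)^{-1}$ of the points \emph{and} a singular point is produced in a definite annulus $2^{-i-1}\leq d(z_i,x_{\bar i})\leq 2^{-i+2}$ about the last surviving singular point $x_{\bar i}$, i.e.\ a bad scale for $x_{\bar i}$. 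Hence the number of splitting steps is at most $3C(p,\Lambda,N)$ and $\#\cS(f)\cap B_{1/2}(0)\leq c(m)^{3C}$. In short: your scheme tries to forbid clustering outright, which cannot be done with the tools at hand; the paper instead permits clustering but shows it can occur at only boundedly many scales per point, which suffices for the cardinality bound.
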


\begin{remark}
 As it is evident, the lower bound on the injectivity radius and the sectional curvature of the manifold $M$ in \eqref{eq_Mbounds} are arbitrary. Indeed, by scaling and covering it is immediate to see that all the results in this section hold for a generic smooth manifold, up to letting $C$ depend also on the lower bounds on curvature and injectivity radius.
\end{remark}

\begin{remark}
 As mentioned before, the case $m<p$ is not interesting since Sobolev embedding implies immediately Holder continuity, and by standard arguments one gets effective $C^{1,\alpha}$ regularity from it. The borderline case $m=p$ is also not very difficult to deal with (\cite[section 3.6]{simon}). For the reader's convenience, we will briefly sketch a quick self-contained argument to prove these statements in Theorem \ref{th_m<p}.
\end{remark}

\subsubsection{Results for Stationary maps}

As for the stationary case, we will start by generalizing the study of the defect measure in \cite{lin_stat} to a generic $p\in (1,\infty)$. An essential tool in this study is the $\epsilon$-regularity theorem, and given that for $p\neq 2$ such a theorem has been proved only if the target $N$ is a compact homogeneous space with a left invariant metric (see \cite{torowang}), we will restrict our study to this case. It is worth noticing that the $\epsilon$-regularity theorem is the only part where the homogeneity of $N$ plays a role, the rest of the arguments are valid for any compact target manifold. 

Using blow-ups and dimension reduction arguments, we will prove that the defect measure can be nonzero only if $p$ is an integer and if there exists a $C^{1,\alpha}$ stationary $p$-harmonic map from $S^p$ to $N$. Thus if we assume that $p$ is not an integer or that such a map doesn't exist, we recover all the regularity results proved in the minimizing case. In particular, we obtain

\begin{theorem}
 Let $u:B_2(0)\to N$ be a stationary $p$-harmonic map, where $N$ is a smooth compact homogeneous space with a left invariant metric. If $p$ is not an integer, then for all $\epsilon>0$:
\begin{gather*}
  \Vol\ton{B_r(\cS(u))\cap B_1(0)}\leq C(m,N,p,\epsilon) r^{\floor p +1-\epsilon}\, .
\end{gather*} 
Moreover, for all $p$ and under the additional assumption \eqref{B}, we can improve the previous estimate to
\begin{gather*}
  \Vol\ton{B_r(\cS(u))\cap B_1(0)}\leq C r^{b+2-\eta}\, .
 \end{gather*}  
\end{theorem}

As in the minimizing case, we also prove the following integrability results.
\begin{corollary}
Under the hypotheses of the previous theorem, for all $\epsilon>0$, $\nabla u\in L^{\floor{p}+1-\epsilon}(B_{1}(0))$ with
 \begin{gather*}
  \int_{B_{1}(0)} \abs{\nabla u}^{\floor{p}+1-\epsilon}\leq C(m,\Lambda,N,p,\epsilon)\, .
 \end{gather*}
Moreover, under the additional assumption \eqref{B}, $\nabla u\in L^{b+2-\epsilon}(B_{1}(0))$ with
 \begin{gather*}
  \int_{B_{1}(0)} \abs{\nabla u}^{b+2-\epsilon}\leq C(m,\Lambda,N,p,\epsilon)\, .
 \end{gather*}
\end{corollary}

Also the estimates for the borderline case carry over immediately.
\begin{theorem}
Under the hypothesis of the previous theorem, suppose that $p$ is not an integer and $m=\floor{p}+1$, or that, for any $p$, $m=b+2$ under the additional assumption \eqref{B}. Let $u$ be a stationary $p$-harmonic map $u:B_2(0)\to N$, where 
 \begin{gather*}
  \int_{B_2(0)}\abs{\nabla u}^p dV \leq \Lambda\, .
 \end{gather*}
Then
 \begin{gather*}
  \# \cS(u)\cap B_{1}(0)\leq C(p,\Lambda,N)\, .
 \end{gather*}
\end{theorem}


\begin{remark}
 For the sake of simplicity, we will only deal with the case $u:B_2(0)\subset \R^m \to N$. Given the local nature of the quantitative stratification, with simple modifications the results hold verbatim also for Riemannian manifolds with \eqref{eq_Mbounds}. The most important modifications needed for the general case will be pointed out in the study of $p$-minimizing maps (Section \ref{sec_minimizing}), while for $p$-stationary maps we refer to the analysis made by Lin for $p=2$, see \cite{lin_stat}, Section 5.
\end{remark}

\subsection{Sketch of the proof}
In this section, we will briefly sketch the main ideas involved in the quantitative stratification. 

It is well known that the monotonicity of the normalized energy $\theta_u(x,\cdot)$ implies the existence of (not necessarily unique) tangent maps for $u$ at every point (see for example \cite{simon}). Tangent maps are necessarily homogeneous weakly harmonic maps, and one says that a tangent map is $k$-symmetric if it is homogeneous and invariant wrt a $k$-dimensional subspace of $\R^m$ (for precise definitions, see Section \ref{sec_pre}). This allows to define a standard stratification of the domain of $u$ based on the number of symmetries of tangent maps. More precisely, for any integer $k\in[0,m]$ we define $\cS^k$ as the set of points $x$ such that all tangent maps at $x$ are not $k+1$ symmetric.

In a manner similar to \cite{chna1} and \cite{ChNa2}, we will define a quantitative stratification which refines the standard one. Roughly speaking, for fixed $r,\eta>0$ the quantitative stratification separates the points $x$ based on the number of $\eta$-{\it almost} symmetries of an approximate tangent map of $u$ at scales  $\geq r$; for a more precise statement see Definition \ref{deph_qstrat}.

The essential point of this paper is to prove Minkowski-type volume estimates on the quantitative strata, as opposed to the weaker Hausdorff estimates on the standard ones.

The key ideas involved in proving the estimates for the quantitative stratification are the {\it energy decomposition}, the $\epsilon$\textit{-regularity theorem} and {\it cone-splitting}.

In general, cone-splitting is the principle that, in the presence of conical structure, an object which is symmetric with respect to two distinct points automatically enjoys a higher order symmetry.

For example, in the setting of this article homogeneity with respect to a point plays the role of conical structure.
A function $h$ is said to be homogeneous wrt to a point, or equivalently $0$-symmetric at a point, if it is constant on the rays through that point. It is immediate to see that if $h$ is homogeneous with respect to two distinct points, then it is automatically constant on all lines parallel to the one joining these points.

In our terminology, we can rephrase this by saying that if a function is $0$-symmetric at two distinct points, then the function is actually $1$-symmetric. Using a simple compactness argument, it is possible to turn this statement into a quantitative cone-splitting for $p$-harmonic maps (see Proposition \ref{prop_conespl}). Roughly speaking, we will prove that if a function is \textit{almost} $0$-symmetric at two reasonably distant points, then it is actually \textit{almost} $1$-symmetric.

The $\epsilon$-regularity theorem provides a link between the strata $\cS^k$ and the singular set $\cS(u)$. Indeed, we will show that if a minimizing map $u$ is close enough in the appropriate sense to an $(m-\floor p)$-symmetric function, then $\nabla u$ is bounded, and $u$ does not have singular points in its domain. Equivalently, $\cS(u)\subset \cS^{m-\floor p-1}$.

The energy decomposition will exploit this by decomposing the space $B_1(0)$ based on which scales $u$ looks almost $0$-symmetric. On each such piece of the decomposition, nearby points automatically either force higher order symmetries or an improved covering of the space. By the $\epsilon$-regularity theorem, if a function has enough approximate symmetries then it is regular, and thus we obtain a good covering of the singular set in each piece of the decomposition. 
The final theorem is obtained by noting that, thanks to the monotonicity properties of the normalized energy, there are far fewer pieces to the decomposition than might  {\it apriori} seem possible.

The volume estimates on the singular points are an easy corollary of the estimates on the quantitative strata and a $\epsilon$-regularity type theorems from \cite{SU,pHL} for the minimizing case, and from \cite{beth,torowang} for the stationary one. Note that in the stationary case and for generic $p$, the $\epsilon$-regularity theorem has been proved only for homogeneous target manifolds. For this reason, we will restrict our study to this setting.

\paragraph{Regularity scale}
Actually the main estimates will not just be on $\cS(u)$, but on $\cB_r(u)$, an even bigger set. Indeed, we will be able to bound not only the size of the singular points, but also the size of the points where the gradient is big. Since the precise definition of $\cB_r(u)$ is rather technical (see \ref{deph_cbr}), here we only point out that
\begin{gather*}
 \cS(u)\subset \cB_r(u)\subset \cur{x\ \ s.t. \ \ \abs{\nabla u}(y)\leq r^{-1}  \ \ \forall y\in B_r(x)}^C\, .
\end{gather*}
Since the techniques described above are quantitative in nature, it should not be surprising that we are able to obtain these kind of quantitative results.

\vspace{3mm}

By using a refined covering, we will also improve the estimates in the case of isolated singularities and obtain a definite bound on the number of singular points.

\subsection{Preliminary properties}\label{sec_pre}
In this section we recall some of the basic properties related to normalized energy and homogeneous maps.
\begin{definition}
 For $u\in W^{1,p}(B_1(0),N)$, and for all $x,r$ such that $B_r(x)\subset B_1(0)$, define
 \begin{gather*}
  \theta_u(x,r) = r^{p-m} \int_{B_r(x)} \abs{\nabla u}^p dV\, .
 \end{gather*}
\end{definition}

A crucial property of stationary (and thus also of minimizing) $p$-harmonic maps is the monotonicity of $\theta(x,r)$ wrt $r$. The monotonicity follows from this well-known first variational formula (see for example \cite[eq. 1.3]{lin_stat}).
\begin{proposition}
 Let $u$ be a stationary $p$-harmonic map $u:B_1(0)\subset \R^m\to N$. Then for all smooth compactly supported vector fields $\xi\in C^\infty_c(B_1(0),\R^m)$,
 \begin{gather}
   \label{eq_p-stat} \int_{B_r(x)} \abs{\nabla u}^{p-2} \qua{\abs{\nabla u}^2 \delta^i_j - p \nabla^i u \nabla_j u }\partial_i \xi^j dV =0\, .
 \end{gather}
\end{proposition}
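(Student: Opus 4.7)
The plan is to derive \eqref{eq_p-stat} directly from the definition of stationarity by computing the first variation of $E_p$ along an inner deformation $\phi_t(y) := y + t\xi(y)$ of the domain. The quantity in brackets is (up to an overall sign) the stress--energy tensor $T^i_j := \abs{\nabla f}^{p-2}\qua{\abs{\nabla f}^2 \delta^i_j - p\,\nabla^i f\,\nabla_j f}$, and the identity to be proved is precisely the statement that $T$ is divergence-free in the distributional sense.

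Given $\xi \in C^\infty_c(B_r(x),\R^m)$, the map $\phi_t$ is, for $|t|$ small, a diffeomorphism of $B_1(0)$ which is the identity off the support of $\xi$. Thus $f_t := f \circ \phi_t$ is admissible in the inner variation definition of stationarity, giving $\left.\frac{d}{dt}\right|_{t=0} E_p(f_t) = 0$.

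To exploit this, expand $\abs{\nabla f_t}^p$ to first order in $t$. By the chain rule $\partial_i f_t(y) = \partial_k f(\phi_t(y))(\delta^k_i + t\,\partial_i \xi^k(y))$, and using the symmetry of $\nabla^k f\,\nabla^l f$ in $k,l$,
\[\abs{\nabla f_t}^p(y) = \abs{\nabla f}^p(\phi_t(y)) + tp\,\abs{\nabla f}^{p-2}\,\nabla^k f\,\nabla^l f(\phi_t(y))\,\partial_k \xi^l(y) + O(t^2).\]
Now change variables $z = \phi_t(y)$ in $E_p(f_t) = \int \abs{\nabla f_t(y)}^p\,dy$, using $\det D\phi_t = 1 + t\,\partial_i \xi^i + O(t^2)$ (hence $dy = (1 - t\,\partial_i \xi^i(z) + O(t^2))\,dz$) and $\partial_k \xi^l(\phi_t^{-1}(z)) = \partial_k \xi^l(z) + O(t)$. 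Collecting first-order contributions yields
\[E_p(f_t) - E_p(f) = t\int_{B_r(x)} \abs{\nabla f}^{p-2}\qua{p\,\nabla^k f\,\nabla^l f - \abs{\nabla f}^2\,\delta^k_l}\partial_k \xi^l\,dV + O(t^2).\]
Setting the linear term to zero and relabelling indices produces exactly \eqref{eq_p-stat}.

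The argument is standard first-variation bookkeeping and poses no genuine obstacle. The only small care needed is to retain both independent $O(t)$ contributions --- the chain-rule expansion of the gradient and the Jacobian of $\phi_t$ --- and to observe that the two chain-rule terms coalesce because of the symmetry $\nabla^k f\,\nabla^l f = \nabla^l f\,\nabla^k f$, yielding the correct coefficient $p$ (rather than $p/2$) in front of $\nabla^i f\,\nabla^j f$.
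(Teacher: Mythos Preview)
Your derivation is correct and is the standard first-variation computation; the paper itself does not supply a proof of this proposition but simply records it as the ``well-known first variational formula'' with a reference to \cite[eq.~1.3]{lin_stat}. Your argument---expanding $\abs{\nabla(f\circ\phi_t)}^p$ to first order via the chain rule, then absorbing the Jacobian of $\phi_t$ through a change of variables---is exactly how one obtains the distributional divergence-free condition on the stress--energy tensor, so there is nothing to compare against and nothing missing.
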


\begin{proposition}\label{prop_mono_stat}
Let $u$ be a stationary $p$-harmonic map $u:B_1(0)\subset \R^m\to N$, then the normalized $p$-energy is monotone nondecreasing in $r$. In particular for a.e. $r>0$:
\begin{gather}\label{eq_p-der}
 \frac{d}{dr} \theta(x,r) = pr^{p-m}\int_{\partial B_r(x)} \abs{\nabla u}^{p-2} \abs{\frac{\partial u}{\partial n}}^2 dS \geq 0\, .
\end{gather}
\end{proposition}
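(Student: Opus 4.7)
The plan is to insert a radial cutoff vector field into the stationarity identity \eqref{eq_p-stat} and then localize to the sphere $\partial B_r(x)$. Fix $x\in B_1(0)$ and $r$ with $B_r(x)\subset B_1(0)$, and for $\epsilon>0$ pick a smooth decreasing cutoff $\phi_\epsilon\in C_c^\infty([0,\infty))$ with $\phi_\epsilon\equiv 1$ on $[0,r-\epsilon]$ and $\phi_\epsilon\equiv 0$ on $[r,\infty)$. I would then test \eqref{eq_p-stat} against the radial vector field $\xi^j(y):=\phi_\epsilon(|y-x|)(y^j-x^j)$.

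A direct computation gives
\begin{equation*}
\partial_i \xi^j = \phi_\epsilon(|y-x|)\,\delta_i^j + \phi_\epsilon'(|y-x|)\,\frac{(y_i-x_i)(y^j-x^j)}{|y-x|}.
\end{equation*}
Using $\nabla^i f \nabla_j f\,\delta_i^j = |\nabla f|^2$ together with the identification $\nabla^i f (y_i-x_i)/|y-x| = \partial f/\partial n$ along radial rays from $x$, the identity \eqref{eq_p-stat} reduces to
\begin{equation*}
0 = \int_{B_1(0)} |\nabla f|^{p-2}\left\{(m-p)\phi_\epsilon |\nabla f|^2 + \phi_\epsilon'(|y-x|)\,|y-x|\left[|\nabla f|^2 - p\,\abs{\tfrac{\partial f}{\partial n}}^2\right]\right\} dV.
\end{equation*}

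Next I would let $\phi_\epsilon\to\chi_{[0,r]}$, so that $\phi_\epsilon'\to -\delta_r$ as a distribution in the radial variable. For almost every $r$ the traces of $|\nabla f|^p$ and $|\nabla f|^{p-2}|\partial f/\partial n|^2$ on $\partial B_r(x)$ lie in $L^1$, by the coarea formula applied to the $L^1(B_1)$ function $|\nabla f|^p$, and the limit yields the integrated monotonicity identity
\begin{equation*}
(m-p)\int_{B_r(x)} |\nabla f|^p dV = r\int_{\partial B_r(x)}|\nabla f|^p dS - pr\int_{\partial B_r(x)} |\nabla f|^{p-2}\abs{\tfrac{\partial f}{\partial n}}^2 dS.
\end{equation*}
Differentiating $\theta_f(x,r)=r^{p-m}\int_{B_r(x)}|\nabla f|^p dV$ directly in $r$ and using this identity to eliminate the bulk integral then produces exactly the claimed formula, whose nonnegativity is manifest.

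The main technical subtlety is the passage from the smooth cutoff $\phi_\epsilon$ to the characteristic function $\chi_{[0,r]}$: one has to restrict to the full-measure set of radii for which $\partial B_r(x)$ carries well-defined $L^1$ traces of the relevant integrands, which is provided by the coarea formula, and then upgrade the integrated identity to the pointwise derivative at a.e.\ $r$ via a standard Lebesgue differentiation argument.
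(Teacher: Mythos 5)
Your proposal is correct and follows exactly the route the paper intends: the paper states the stationarity identity \eqref{eq_p-stat} immediately before the proposition (citing Lin) precisely so that monotonicity follows by testing it with the radial cutoff field $\xi^j(y)=\phi_\epsilon(|y-x|)(y^j-x^j)$, which yields the Pohozaev-type identity $(m-p)\int_{B_r(x)}|\nabla f|^p\,dV = r\int_{\partial B_r(x)}|\nabla f|^p\,dS - pr\int_{\partial B_r(x)}|\nabla f|^{p-2}\bigl|\tfrac{\partial f}{\partial n}\bigr|^2\,dS$ for a.e.\ $r$, and then \eqref{eq_p-der} by differentiating $\theta$. Your handling of the technical points (coarea for a.e.\ traces, Lebesgue differentiation, and implicitly the absolute continuity of $r\mapsto\theta(x,r)$ needed to pass from the a.e.\ nonnegative derivative to genuine monotonicity) is standard and sound.
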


\begin{remark}
 If $u$ is defined on a Riemannian manifold, then $\theta(x,r)$ is not monotone but only ``almost'' monotone in the following sense: there exists a constant $C$ depending on $m$, $N$ and $p$ such that $e^{Cr}\theta(x,r)$ is monotone for all $r\leq \operatorname{inj}(M)$. See \cite[section 7]{pHL} for details in the minimizing case (the stationary case is completely analogous). This version of almost monotonicity is enough for all our purposes.
\end{remark}

As it is clear from equation \eqref{eq_p-der}, the normalized energy is very much related to homogeneous maps, of which we recall the definition here.
\begin{definition}
 We say that $h\in W^{1,p}(\R^m,N)$ is a homogeneous function of degree zero wrt the origin if for a.e. $\lambda >0$ and $x\in \R^m$:
 \begin{gather*}
  h(\lambda x) = h(x) \, ,
 \end{gather*}
or equivalently if $\frac{\partial h}{\partial n}=0$. We say that $h$ is a $k$-symmetric function if $h$ is homogeneous of degree zero and there exists a subspace $V$ of $\R^m$ of dimension $k$ such that
\begin{gather*}
 h(x+y)= h(x)
\end{gather*}
for a.e. $x\in \R^m$ and $y\in V$. 
\end{definition}

\begin{remark}
 For simplicity, from now on we will use the terms $0$-symmetric, homogeneous and homogeneous of degree zero as equivalent.
\end{remark}

Evidently, $h$ is $m$-symmetric if and only if it is a.e. constant.

\begin{remark}\label{rem_khom}
 By simple considerations, it is easy to see that the class of homogeneous functions $h:\R^m\to N$ is closed in the $L^p$ topology for any $p<\infty$. Moreover, if $h$ is homogeneous wrt the points $\{x_i\}$, then $h$ is symmetric wrt the affine space spanned by these points.
\end{remark}

We define also almost homogeneous functions according to their closeness to homogeneous functions. Before doing so, we define the blow-ups $T_{x,r}^u$.
\begin{definition}
 For $x\in B_1(0)$ and $r\leq 1$, define $T_{x,r}^u : B_1(0)\subset \R^m \to N$ by
 \begin{gather*}
T_{x,r}^u (y) \equiv u(x+r y)\, .
 \end{gather*}
For ease of notation we will write $T_{x,r}$ instead of $T_{x,r}^u$ when no ambiguity is possible.
 \end{definition}
 \begin{remark}In case $M$ is a Riemannian manifold, it is natural to replace the Euclidean blow-up with the one given by the exponential map. In particular, in this case we would define $T^u_{x,r}:B_1(0)\subset T_x(M)\to \R$ by
 \begin{gather*}
  T^u_{x,r} (y) \equiv u\ton{\exp_x(ry)}\, .
 \end{gather*}
\end{remark}
\begin{remark}[Scale invariance]
 From the definition of normalized energy, it is immediate to see that $\theta$ is scale-invariant. In other words
 \begin{gather*}
  \theta_u (x,r) = \theta_{T^u_{x,r}} (0,1)\, .
 \end{gather*}
\end{remark}

\begin{definition}
 We say that $u$ is $(k,\epsilon,r,x)$-symmetric if there exists a $k$-symmetric map $h$ such that
 \begin{gather*}
  \fint_{B_1(0)} d\ton{T^u_{x,r},h}^p dV <\epsilon\, .
 \end{gather*}
\end{definition}
With this definition, we can define the strata $\cS^{k}_{\eta,r}$ by:
\begin{definition}\label{deph_qstrat}
 Given a $p$-minimizing map $u$, an integer $k\geq 0$ and $r,\eta>0$, we define
 \begin{gather*}
  \cS^k_{\eta,r} = \cur{x\in B_1(0) \ \ s.t. \ \ \forall s\in [r,1]\, , \ \ u \ \ \text{is NOT} \  \ \ton{k+1,\eta,s,x}\text{-symmetric}}.
 \end{gather*}
\end{definition}

\section{Minimizing maps}\label{sec_minimizing}
The aim of this chapter is to prove the volume estimates on the strata $\cS^k_{\eta,r}$ for $p$-minimizers, and use them to prove regularity results. We start by proving a quantitative cone-splitting theorem (one could call it an ``almost'' cone-splitting).
\subsection{Cone-splitting theorem}
The cone-splitting theorem is the quantitative version of Remark \ref{rem_khom}. Using a simple compactness argument, we see that if $u$ is almost symmetric with respect to a set of points, and if this set of points ``almost spans'' a $k$ dimensional space, then $u$ is almost $k$ symmetric.

\begin{proposition}\label{prop_conespl}
 Let $u$ be a $p$-minimizing map with $\int_{B_2(0)}\abs{\nabla u}^p\leq \Lambda$, and fix some $\eta,\tau>0$. Then there exists $\epsilon=\epsilon(m,N,\Lambda,p,\eta,\tau)$ such that if
 \begin{enumerate}
  \item $u$ is NOT $(k+1,\eta,r,x)$-symmetric;
  \item $u$ is $(0,\epsilon,2r,x)$-symmetric;
 \end{enumerate}
then there exists a $k$-dimensional plane $V$ such that
\begin{gather*}
 \cur{y\ \ s.t. \ \ u\ \ \text{is} \ \ (0,\epsilon,2r,y)\text{-symmetric}}\cap B_r(x) \subset B_{\tau r} (V)\, ,
\end{gather*}
where $B_r(S)=\cur{x \ \ s.t. \ \ d(x,S)<r}$ is the tubular neighborhood of radius $r$ around the set $S$.
\end{proposition}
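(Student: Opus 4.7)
My plan is a contradiction-and-compactness argument, exploiting the strong $W^{1,p}$-compactness of $p$-minimizers cited in the introduction. Suppose the conclusion fails for some fixed $\eta,\tau>0$: along a sequence $\epsilon_i\to 0$ there exist $p$-minimizers $f_i$ with $\int_{B_2(0)}|\nabla f_i|^p\le\Lambda$, centers $x_i$ and scales $r_i$ satisfying hypotheses (1) and (2) at scale $r_i$, but for which
\[
  S_i:=\gr{y\in B_{r_i}(x_i) \ : \ f_i \text{ is } (0,\epsilon_i,2r_i,y)\text{-symmetric}}
\]
is not contained in any $\tau r_i$-tubular neighborhood of a $k$-plane. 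As a first step, I would extract $k+2$ quantitatively affinely independent points from $S_i$ by the following iteration: set $y_i^{(0)}:=x_i$, which lies in $S_i$ by hypothesis (2); once $y_i^{(0)},\dots,y_i^{(j)}$ are chosen with $j\le k$, their affine span has dimension $\le k$ and so can be enlarged to a $k$-plane $V$, and the failure of the conclusion provides $y_i^{(j+1)}\in S_i$ at distance $>\tau r_i$ from $V$, hence from the previous affine span.

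The next step is to rescale: put $\tilde f_i:=T_{x_i,r_i}^{f_i}$ and $\tilde y_i^{(j)}:=(y_i^{(j)}-x_i)/r_i\in\overline{B_1(0)}$. By scale invariance of $\theta$ together with its almost-monotonicity, the $\tilde f_i$ are $p$-minimizers with uniformly bounded normalized energy on some fixed larger ball $B_R(0)$, once $|x_i|$ and $r_i$ are sufficiently small (which we may arrange). The rescaled base-points remain $\tau$-affinely independent with $\tilde y_i^{(0)}=0$, and the hypotheses become: $\tilde f_i$ is $(0,\epsilon_i,2,\tilde y_i^{(j)})$-symmetric for every $j$, and not $(k+1,\eta,1,0)$-symmetric. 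By the strong $W^{1,p}$-compactness of $p$-minimizers (\cite{luck}, \cite[section 2.9]{simon}), I pass to a subsequence with $\tilde f_i\to f_\infty$ strongly in $W^{1,p}(B_R(0))$, with $f_\infty$ itself a $p$-minimizer, and $\tilde y_i^{(j)}\to y_\infty^{(j)}$ still $\tau$-affinely independent.

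Finally I pass the approximate homogeneities to the limit. For each $j$, the blow-up $T^{\tilde f_i}_{\tilde y_i^{(j)},2}$ converges strongly in $L^p(B_1(0))$ to $T^{f_\infty}_{y_\infty^{(j)},2}$, and it is $\epsilon_i$-close to a homogeneous function $h_i^{(j)}$; by the $L^p$-closedness of the class of homogeneous functions (Remark \ref{rem_khom}), the limit $T^{f_\infty}_{y_\infty^{(j)},2}$ is itself homogeneous, so $f_\infty$ is homogeneous with respect to every $y_\infty^{(j)}$. By the cone-splitting statement of Remark \ref{rem_khom}, homogeneity with respect to $k+2$ affinely independent points forces $f_\infty$ to be symmetric under their $(k+1)$-dimensional affine span, hence $(k+1)$-symmetric. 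Passing the non-$(k+1,\eta,1,0)$-symmetry of $\tilde f_i$ to the strong $L^p$-limit however yields $\fint_{B_1(0)} d(f_\infty,h)^p\ge\eta$ for every $(k+1)$-symmetric $h$, in direct contradiction. I expect the main obstacle to lie precisely in this last step: one needs \emph{strong} (not merely weak) $W^{1,p}$-convergence of the rescaled minimizers so that the $L^p$-closeness to homogeneous functions is preserved in the limit, which is exactly why a separate compactness theorem for minimizers must be invoked, and which explains why the analogous statement for stationary maps will require the heavier defect-measure machinery developed later.
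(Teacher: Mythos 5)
Your proposal is correct and follows essentially the same route as the paper: a contradiction-compactness argument in which one extracts $k+2$ points that are $\tau$-independent and almost-symmetric, rescales (the paper simply normalizes $x=0$, $r=1$ at the outset), uses strong $W^{1,p}$-convergence of $p$-minimizers to obtain a limit minimizer homogeneous with respect to all limit points, and invokes the cone-splitting of Remark \ref{rem_khom} to contradict the failure of $(k+1,\eta,1,0)$-symmetry. The only differences are expository: you make explicit the iterative extraction of the $\tau$-separated points, which the paper's proof posits directly in its contradiction hypothesis.
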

\begin{proof}
For convenience, we fix $x=0$ and $r=1$. Suppose by contradiction that the proposition is false. Then for each fixed $\eta$ and $\tau$, we can find a sequence of $p$ minimizing maps $u_i$ and a sequence of points $x_0^{(i)},\cdots,x_{k+1}^{(i)}\in B_1(0)$ such that
\begin{enumerate}
 \item $x_0=0$,
 \item $u_i$ is $(0,i^{-1},2,x_j^{(i)})$ symmetric for all $j$,
 \item for all $j=1,\cdots,k+1$, $d\ton{x_j^{(i)},\operatorname{span}\ton{x_0^{(i)},x_1^{(i)},\cdots,x_{j-1}^{(i)}}} \geq \tau$,
 \item $\int_{B_2(0)}\abs{\nabla u_i}^p\leq \Lambda$ .
\end{enumerate}
By compactness, $u_i$ (sub)converges weakly in the $W^{1,p}$ sense to a function $u$. According to \cite[Corollary 2.8]{pHL}, since $u_i$ are $p$-minimizers the convergence is also strong $W^{1,p}$ sense, and it is a minimizer by \cite{luck} (see also \cite[section 2.9]{simon}).

Moreover, by passing to a subsequence if necessary, we have $\lim_{i\to \infty}x^{(i)}_j = x_j$, and $\operatorname{span}(x_j)_{j=0}^{k+1}$ is a $k+1$ dimensional subspace.

The almost homogeneity properties of $u_i$ imply that $u$ is homogeneous with respect to all $x_j$ on $B_2(x_j)\supset B_1(0)$, and thus it is $k+1$ symmetric on $B_1(0)$. Since $u_i$ converges to $u$, for $i$ sufficiently large $u_i$ has to be $(k+1,\eta,1,0)$ symmetric, which is a contradiction.
\end{proof}

\subsection{Energy pinching and almost homogeneity}\label{sec_normen}
An immediate consequence of the monotonicity property (or better, of equation \eqref{eq_p-der}), is that if $\theta_u(x,r_1)=\theta_u(x,r_2)$, then $u$ is homogeneous wrt $x$ on the annulus $B_{r_2}(x)\setminus B_{r_1}(x)$. By a simple compactness argument, we can prove that if the normalized energy is sufficiently pinched, i.e. if $\theta(x,r)-\theta(x,\chi r)$ is small enough, then $u$ is almost homogeneous. This gives a very powerful characterization of almost homogeneous functions, specially given the monotonicity of $\theta$. Indeed, if we consider a sequence of scales $r_k=\chi^{-k}$, by monotonicity only for a bounded number of $k$ the difference $\theta(x,r_k)-\theta(x,r_{k+1})$ can be big. This proves that, for each $x$, $p$-minimizers are almost homogeneous wrt $x$ at all but a bounded number of scales.
\begin{theorem}\label{th_qrig}
 Let $u$ be a $p$-minimizer with $\int_{B_2(0)}\abs{\nabla u}^p dV \leq \Lambda$, $x\in B_1(0)$ and $r\leq 1$. Then for every $\epsilon>0$, there exists $\delta=\delta(m,N,\Lambda,p,\epsilon)$ and $0<\chi=\chi(m,N,\Lambda,p,\epsilon)\leq 1/2$ such that
 \begin{gather*}
 \theta(x,r)-\theta(x,\chi r)\leq \delta
 \end{gather*}
implies that $u$ is $(0,\epsilon,r,x)$-symmetric.
\end{theorem}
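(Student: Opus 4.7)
The plan is a standard compactness/contradiction argument that converts the energy pinching into the equality case of the monotonicity formula \eqref{eq_p-der} and then exploits the rigidity of that equality case. By applying the blow-up $T^f_{x,r}$ I reduce to $x=0$, $r=1$: both the normalized energy and the $p$-minimizing property are preserved under this rescaling, while the energy bound degrades by at most a fixed constant.

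Suppose the statement fails. Then for some $\epsilon_0>0$ I can find sequences $\delta_i\downarrow 0$, $\chi_i\downarrow 0$, and $p$-minimizers $f_i:B_2(0)\to N$ with $\int_{B_2(0)}|\nabla f_i|^p\le \Lambda$, none of them $(0,\epsilon_0,1,0)$-symmetric, yet satisfying
\begin{gather*}
\theta_{f_i}(0,1)-\theta_{f_i}(0,\chi_i)\le \delta_i.
\end{gather*}
Invoking the same strong-convergence compactness theorem used in the proof of Proposition~\ref{prop_conespl} (\cite[Corollary 2.8]{pHL}, \cite{luck}), a subsequence of $f_i$ converges strongly in $W^{1,p}_{\mathrm{loc}}(B_2(0))$ to a $p$-minimizer $f$. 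Strong $L^p$ convergence of $\nabla f_i$ ensures that $\theta_{f_i}(0,\rho)\to\theta_f(0,\rho)$ for every $\rho\in(0,1]$.

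Next I would extract the rigidity. Fix any $s\in(0,1]$. Once $\chi_i<s$, Proposition~\ref{prop_mono_stat} gives $\theta_{f_i}(0,\chi_i)\le\theta_{f_i}(0,s)$, hence
\begin{gather*}
0\le \theta_{f_i}(0,1)-\theta_{f_i}(0,s)\le \theta_{f_i}(0,1)-\theta_{f_i}(0,\chi_i)\le \delta_i,
\end{gather*}
and passing to the limit, $\theta_f(0,1)=\theta_f(0,s)$ for every $s\in(0,1]$. Integrating \eqref{eq_p-der} over $[s,1]$ forces $|\nabla f|^{p-2}|\partial f/\partial n|^2\equiv 0$ a.e.\ on $B_1(0)$, so $\partial f/\partial n=0$ a.e.\ and $f$ is $0$-symmetric on $B_1(0)$. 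The same strong $W^{1,p}$ convergence also implies $\fint_{B_1(0)} d(f_i,f)^p\to 0$, so for $i$ large the limit map $f$ itself serves as a witness that $f_i$ is $(0,\epsilon_0,1,0)$-symmetric, a contradiction.

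The main obstacle is the upgrade from weak to strong $W^{1,p}$ compactness: this is the one place the minimizing hypothesis genuinely enters. Without it, the pinching inequality need not descend to the limit (the energies $\theta_{f_i}(0,\cdot)$ could lose mass), nor could one produce the $L^p$-close $0$-symmetric competitor at the end. Precisely this failure is what will later force the defect-measure analysis for stationary maps.
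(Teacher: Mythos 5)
Your proposal is correct and follows essentially the same route as the paper's proof: a contradiction argument using the strong $W^{1,p}$ compactness of $p$-minimizers (\cite[Corollary 2.8]{pHL}, \cite{luck}) to pass the energy pinching to a limit minimizer whose normalized energy $\theta_f(0,\cdot)$ is constant, whence homogeneity via the rigidity of \eqref{eq_p-der} and a contradiction with the assumed failure of $(0,\epsilon_0,1,0)$-symmetry. You merely spell out steps the paper leaves implicit (the convergence $\theta_{f_i}(0,\rho)\to\theta_f(0,\rho)$, the equality case of monotonicity, and using the homogeneous limit itself as the $L^p$-close competitor), all of which are accurate.
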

\begin{proof}
 Given the scale-invariant nature of this statement, we can assume without loss of generality that $x=0$ and $r=1$. Consider a sequence of $p$-minimizers $u_i$ with $\int_{B_2(0)}\abs{\nabla u_i}^p\leq \Lambda$ and 
 \begin{gather*}
  \theta_{u_i}(0,1)-\theta_{u_i}(0,i^{-1})\leq i^{-1}\, .
 \end{gather*}
By weak compactness, we can assume that $u_i$ (sub)converges weakly in $W^{1,p}(B_1(0))$ to some $u$.

In order to prove that $u$ is homogeneous, consider that $u_i$ are $p$-minimizers. Thus $u_i$ converge {\it strongly} to $u$, and in particular $\theta_0^{u}(r)$ is constant for $r\in (0,1)$. Thus $u$ is homogeneous on $B_1(0)$.

Alternatively, one can use an argument similar to the proof of \cite[Lemma 2.5]{SU} to prove the homogeneity of the tangent map.
\end{proof}
In case of a Riemannian manifold, the previous statement needs to be tweaked a little. Indeed, the limit function $u$ in the previous proof is defined on $B_1(0)\subset T_x(M)$ and it minimizes the $p$-energy with respect to the metric on the manifold, not with respect to the standard Euclidean metric. Moreover, since $\theta$ in this case is only almost monotone, $u$ need not be homogeneous. For these reasons, we also need $r$ in the previous theorem to be effectively small, so that the geodesic ball $B_r(0)$ is close enough to the Euclidean ball with the same radius.
\begin{theorem}\label{th_qrig_riemm}
 Let $u:B_2(0)\subset M \to N$ be a $p$-minimizer with $\int_{B_2(0)}\abs{\nabla u}^p dV \leq \Lambda$, $x\in B_1(0)$ and $r\leq 1$. Then for every $\epsilon>0$, there exists $\delta=\delta(m,N,\Lambda,p,\epsilon)$, $r_0=r_0(m,N,\Lambda,p,\epsilon)$ and $0<\chi=\chi(m,N,\Lambda,p,\epsilon)\leq 1/2$ such that $r\leq r_0$ and
 \begin{gather*}
 \theta(x,r)-\theta(x,\chi r)\leq \delta
 \end{gather*}
implies that $u$ is $(0,\epsilon,r,x)$-symmetric.
\end{theorem}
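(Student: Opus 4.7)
The plan is to mimic the contradiction argument used for Theorem \ref{th_qrig}, but to perform the compactness step on blow-ups, so that the curved background metric is flattened to Euclidean in the limit and the Euclidean monotonicity formula \eqref{eq_p-der} becomes available. Suppose for contradiction that the statement fails for some $\epsilon>0$: then there exist $p$-minimizers $f_i:B_2(0)\subset M\to N$ with $\int_{B_2(0)}\abs{\nabla f_i}^p\leq \Lambda$, points $x_i\in B_1(0)$, and parameters $r_i,\chi_i\to 0$ such that $\theta_{f_i}(x_i,r_i)-\theta_{f_i}(x_i,\chi_i r_i)\leq i^{-1}$ while no $f_i$ is $(0,\epsilon,r_i,x_i)$-symmetric.

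First I would introduce the rescaled maps $g_i:=T_{x_i,r_i}^{f_i}$, viewed via the exponential chart as maps $B_1(0)\subset T_{x_i}(M)\to N$, and equip $T_{x_i}(M)\cong\R^m$ with the rescaled pull-back metrics $h_i:=r_i^{-2}\exp_{x_i}^{*}g_M$. The bounds \eqref{eq_Mbounds} combined with $r_i\to 0$ guarantee that $h_i\to g_{\mathrm{eucl}}$ smoothly on every compact set of $\R^m$. Each $g_i$ is a $p$-minimizer for $(B_1(0),h_i)$, and scale invariance together with almost monotonicity bounds its $p$-energy by $e^C\Lambda$. Weak $W^{1,p}$ compactness extracts a limit $g$, and by adapting \cite[Corollary~2.8]{pHL} and Luckhaus's argument \cite{luck} from a fixed background to smoothly converging ones, the convergence is promoted to strong $W^{1,p}$ and the limit $g$ is a genuine Euclidean $p$-minimizer on $B_1(0)$.

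Strong convergence and scale invariance give $\theta_g(0,s)=\lim_i\theta_{g_i}(0,s)=\lim_i\theta_{f_i}(x_i,s\,r_i)$ for every $s\in(0,1]$. Since $r_i\to 0$, the almost-monotonicity factor $e^{Cs\,r_i}$ in the Riemannian monotonicity inequality tends to $1$, and the pinching hypothesis then forces $\theta_g(0,\cdot)$ to be constant on $(0,1]$. Feeding this into the Euclidean monotonicity identity \eqref{eq_p-der} yields $\partial g/\partial n=0$ almost everywhere, so $g$ is $0$-symmetric on $B_1(0)$. Because $g_i\to g$ in $L^p$, for $i$ large enough $g_i$ is $(0,\epsilon,1,0)$-symmetric, equivalently $f_i$ is $(0,\epsilon,r_i,x_i)$-symmetric, contradicting the choice of the sequence.

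The principal obstacle is the passage to the limit when the background metric itself varies. One has to check both that the $p$-minimizing property survives under $h_i\to g_{\mathrm{eucl}}$ (a $\Gamma$-convergence-type argument) and that the weak $W^{1,p}$ limit is genuinely strong, extending \cite{luck} from a single metric to smoothly converging ones. This is precisely the role of the hypothesis $r\leq r_0$: the rescaling is what makes the almost monotonicity, the almost Euclidean $p$-minimality, and the deviation of the exponential chart from an affine chart all controlled by an error that vanishes with $r_0$, and hence can be absorbed into the compactness argument.
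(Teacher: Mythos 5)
Your proof follows essentially the same route as the paper's: a contradiction argument run on exponential-chart blow-ups $T^{f_i}_{x_i,r_i}$, with a uniform $W^{1,p}$ bound from almost monotonicity and \eqref{eq_Mbounds}, and with strong convergence plus Euclidean $p$-minimality of the limit obtained by adapting \cite[Propositions 4.7 and 5.2]{SU} or Luckhaus's technique \cite{luck}, which is exactly the step the paper delegates to those references. Your closing observation on the role of $r\le r_0$ --- that the rescaling makes the metric deviation and the almost-monotonicity error vanish and hence absorbable into the compactness argument --- is precisely the point of the paper's remark preceding the theorem.
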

\begin{proof}
The proof proceeds as in the Euclidean case. In particular, by contradiction we build a sequence $u_i$ which minimize the Riemannian $p$-energy on $B_{i^{-1}}(0)$. By the almost monotonicity of $\theta$, and by the assumptions \eqref{eq_Mbounds}, the sequence $T_i=T_{0,i^{-1}}^{u_i}$ has a uniform $W^{1,p}(B_1(0))$ bound. Thus $T_i$ has a weakly convergent subsequence.

The strong convergence of $T_i$ and the fact that $T$ is a Euclidean $p$-minimizer can be proved by a simple adaptation of \cite[Proposition 4.7 and Proposition 5.2]{SU}. Alternatively, one can use the technique of $\epsilon$-almost minimizers developed in \cite{luck} (see also \cite[section 2]{simon}).
\end{proof}
\begin{remark}
 Since $r_0$ depends only on $m,N,\Lambda,p$, the extra assumption $r\leq r_0$ does not change in a significant way any of the volume estimates we want to prove.
\end{remark}

\subsection[e-regularity theorem]{$\epsilon$-regularity theorem}
The last important ingredient needed for the proof of our main theorems is the so-called $\epsilon$-regularity theorem for $p$-minimizers. This theorem states that if $u$ is close enough to a constant in the $L^p$ sense, then $u$ is regular. More precisely we have
\begin{theorem}[$\epsilon$-regularity theorem]\cite[Corollary 2.7, Theorem 3.1]{pHL}\label{th_eps_pHL}
 Let $u$ be a $p$-minimizing map $u:B_2(0)\to N$. Then for every $\Lambda>0$, there exists constants $\delta(\Lambda,m,N,p)>0$, $\alpha(m,N,p)>0$ and $C(m,N,p)>0$ such that if
 \begin{gather*}
  \int_{B_2(0)} \abs {\nabla u} ^p dV \leq \Lambda \quad \text{and} \quad \int_{B_2(0)} d(u,w)^p dV \leq \delta\, ,
 \end{gather*}
where $w$ is any fixed point $w\in N$, then $f$ is $(1,\alpha)$-Holder continuous on $B_{1}(0)$ and
\begin{gather*}
 \norm{f}_{C^{1,\alpha}(B_{1}(0))}\leq C\, .
\end{gather*}
\end{theorem}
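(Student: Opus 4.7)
The plan is to follow the Schoen--Uhlenbeck paradigm adapted to the degenerate $p$-Laplacian. The proof has four conceptual steps: (i) convert the $L^p$ smallness hypothesis into smallness of the normalized energy $\theta_f$; (ii) establish a one-step energy decay; (iii) iterate to get Hölder continuity; (iv) bootstrap to $C^{1,\alpha}$ via linear elliptic theory for the $p$-harmonic system.

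\textbf{Step 1 (energy smallness).} The first task is to show that the hypothesis $\fint_{B_1}d(f,w)^p\leq \delta$ forces $\theta_f(0,3/4)\leq \epsilon(\delta)\to 0$ as $\delta\to 0$. I would use a Luckhaus-type gluing. Pick a good radial slice $\partial B_\rho$ with $\rho\in[3/4-\sigma, 3/4]$ on which both the boundary $p$-energy of $f$ and the $L^p$ distance $\norm{f-w}_{L^p(\partial B_\rho)}$ are controlled by Fubini from the bulk data. On the thin annulus $B_\rho\setminus B_{\rho-\sigma}$, build a comparison map by radial linear interpolation between $f|_{\partial B_\rho}$ and $w$ followed by the nearest-point projection $\Pi$ onto $N$; extend by the constant $w$ inside $B_{\rho-\sigma}$. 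The resulting competitor has $p$-energy controlled by $\sigma^{1-p}\norm{f-w}_{L^p(\partial B_\rho)}^p+\sigma\cdot(\text{boundary energy})$, and minimality of $f$ then gives a quantitative bound of $\int_{B_{3/4}}\abs{\nabla f}^p$ by a positive power of $\delta$.

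\textbf{Step 2 (decay).} Next I would prove a one-step decay: there are $\epsilon_0,\tau,\theta\in(0,1)$ such that $\theta_f(x,r)<\epsilon_0$ implies $\theta_f(x,\tau r)\leq \theta\cdot\theta_f(x,r)$. This is a standard blow-up/contradiction argument: assuming failure, one extracts a sequence of $p$-minimizers $f_i$ with $\theta_{f_i}(0,1)\to 0$ yet $\theta_{f_i}(0,\tau)\geq \theta\cdot \theta_{f_i}(0,1)$. The strong $W^{1,p}$ compactness of minimizers (cited in the paper's Section~\ref{sec_pre} discussion) yields a limit $f_\infty$ which is a $p$-minimizing homogeneous map with vanishing normalized energy, hence constant; but strong convergence of the energies contradicts the assumed lower decay. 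The monotonicity formula from Proposition \ref{prop_mono_stat} is used both to make sense of $\theta_{f_\infty}$ and to get the homogeneity of the limit.

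\textbf{Step 3 (Hölder continuity).} Iterating the decay gives $\theta_f(x,r)\leq Cr^{\beta}$ on $B_{1/2}$ for some $\beta>0$. Combined with the Poincaré inequality, this is a Morrey--Campanato estimate on the mean oscillation of $f$, from which $f\in C^{0,\gamma}(B_{1/2})$ follows with a uniform norm bound.

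\textbf{Step 4 ($C^{1,\alpha}$).} Once $f$ is continuous with small oscillation, $f(B_{1/2})$ sits in a single coordinate chart on $N$, and the equation $\Delta_p f = -\abs{\nabla f}^{p-2}II(f)(\nabla f,\nabla f)$ becomes a quasilinear $p$-harmonic system with a right-hand side of quadratic growth in $\nabla f$ and smooth dependence on $f$. The $C^{1,\alpha}$ regularity theory for such systems (Uhlenbeck, Tolksdorf) then promotes Hölder continuity of $f$ to $C^{1,\alpha}$ with effective bounds, completing the proof.

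\textbf{Main obstacle.} The delicate step is Step 1: the Luckhaus-type extension must preserve the constraint $f\in N$, which requires using $\Pi$ on the linear interpolant and therefore requires the interpolant to stay in the tubular neighborhood of $N$ where $\Pi$ is defined. This is exactly where the hypothesis that $f$ is $L^p$-close to a \emph{single} point $w$ is used, and where quantitative control of the slicing constants is most subtle. All subsequent steps are robust once Step 1 is in hand.
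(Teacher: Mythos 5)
The paper itself does not prove this statement: it is quoted directly from Hardt--Lin \cite[Corollary 2.7, Theorem 3.1]{pHL}, so your proposal must be measured against that argument, whose overall architecture (convert $L^p$-closeness to a constant into energy smallness, one-step decay, Morrey iteration, Tolksdorf--Uhlenbeck bootstrap) your outline correctly reconstructs. Steps 1, 3 and the spirit of Step 4 are sound; indeed Step 1 can even be obtained more cheaply by contradiction from the strong $W^{1,p}$ compactness of $p$-minimizers (the Luckhaus machinery you invoke is what underlies that compactness), since $\delta$ is allowed to depend on $\Lambda,m,N,p$ and no explicit rate is needed.

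However, Step 2 as written contains a genuine gap. Your failure hypothesis is the \emph{ratio} inequality $\theta_{f_i}(0,\tau)\geq \theta\cdot\theta_{f_i}(0,1)$ together with $\theta_{f_i}(0,1)\to 0$; both sides then tend to zero, so strong convergence of the unrenormalized maps to a constant yields only $0\geq\theta\cdot 0$ and no contradiction at all. The compactness scheme works only after renormalizing: one must blow up $v_i=(f_i-\bar f_i)/\lambda_i$ with $\lambda_i^p=\theta_{f_i}(0,1)$, show the $v_i$ converge strongly to a solution of the unconstrained constant-coefficient $p$-Laplace system (the curvature term $\abs{\nabla f}^{p-2}II(f)(\nabla f,\nabla f)$ scales away in this limit), use Uhlenbeck's interior gradient estimates for that degenerate system to obtain decay of $\theta_v$ at scale $\tau$, and then transfer the decay back to $f_i$ for large $i$. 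Establishing strong convergence and the transfer of minimality at the renormalized level is precisely the technical heart of the Hardt--Lin proof and again requires a comparison-map construction; it cannot be replaced by the plain compactness statement you cite. Relatedly, your assertion that the limit is homogeneous is unjustified and plays no role: homogeneity of limits comes from pinching of the monotone quantity $\theta$ (as in Theorem \ref{th_qrig}), not from energy smallness. Finally, in Step 4 mere continuity is not sufficient for systems whose right-hand side has critical growth $\abs{\nabla f}^p$; what rescues the bootstrap is the smallness of the scaled energy carried along from Step 2, which permits the perturbation argument off the Tolksdorf--Uhlenbeck theory, and this should be said explicitly.
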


The authors is \cite{pHL} use the $\epsilon$-regularity theorem and the monotonicity of $\theta$ to prove that the Hausdorff dimension of $\cS(u)$ is bounded above by $m-\floor{p}-1$. In particular, this implies that all $m-\floor{p}$ symmetric $p$-minimizers are constant. Using this and a simple compactness argument, we can improve the $\epsilon$-regularity theorem to the following version.
\begin{theorem}\label{th_eps_+}
 Let $u$ be a $p$-minimizing map $u:B_2(0)\to N$ with $\int_{B_2(0)} \abs {\nabla u} ^p dV \leq \Lambda$. There exists constants $\epsilon(\Lambda,m,N,p)>0$ and $\alpha(m,N,p)>0$ such that if $u$ is $(m-\floor p,\epsilon,1,0)$-symmetric then $u$ is $(1,\alpha)$-Holder continuous on $B_{1}(0)$ and
\begin{gather*}
 \norm{u}_{C^{1,\alpha}(B_{1}(0))}\leq 1\, .
\end{gather*}
\end{theorem}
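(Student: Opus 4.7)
The plan is to argue by contradiction using a compactness argument that combines the strong $W^{1,p}$ convergence of $p$-minimizers with the sharp Hausdorff dimension bound $\dim_{\cH} \cS(f) \leq m-\floor{p}-1$ already established in \cite{pHL}. The key observation is that an $(m-\floor{p})$-symmetric $p$-minimizer must be constant, so if $f$ is only quantitatively close to such a symmetric map, basic $\epsilon$-regularity kicks in.

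First I would assume the conclusion fails and extract a sequence of $p$-minimizers $f_i : B_1(0) \to N$ with $\int_{B_1(0)} |\nabla f_i|^p \le \Lambda$ such that each $f_i$ is $(m-\floor{p},i^{-1},1,0)$-symmetric but violates the claimed $C^{1,\alpha}$ bound on $B_{1/2}(0)$. By definition, there exist $(m-\floor{p})$-symmetric maps $h_i$, each homogeneous and invariant under some $(m-\floor{p})$-plane $V_i$, with $\fint_{B_1(0)} d(f_i,h_i)^p \to 0$. By the strong $W^{1,p}$ compactness of $p$-minimizers (\cite[Corollary 2.8]{pHL} combined with \cite{luck}) and compactness of the Grassmannian, I pass to a subsequence along which $f_i \to f$ strongly in $W^{1,p}(B_1(0))$ (with $f$ a $p$-minimizer of energy $\le \Lambda$) and $V_i \to V$. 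From $f_i - h_i \to 0$ in $L^p$ and Remark \ref{rem_khom} (the $L^p$-closedness of $k$-symmetric maps) it follows that $f$ agrees on $B_1(0)$ with a globally defined $(m-\floor{p})$-symmetric map, which by homogeneity I extend to all of $\R^m$ and which is a $p$-minimizer on every ball (by rescaling, using the homogeneity to get minimality at every scale).

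The heart of the proof is showing that this limit $f$ is forced to be constant. Splitting $\R^m = V^{\perp} \oplus V$ with $\dim V^{\perp} = \floor{p}$, the $V$-invariance of $f$ implies $\cS(f)$ is a union of translates of $V$, while homogeneity implies $\cS(f)$ is a cone with vertex $0$. If $f$ were not constant, then $f$ would fail to be continuous at $0$ (since a continuous homogeneous map is constant), so $0 \in \cS(f)$; but then $V \subset \cS(f)$, giving $\dim_{\cH} \cS(f) \geq m-\floor{p}$, contradicting the sharp bound of \cite{pHL}. Equivalently, one can slice to a homogeneous $p$-minimizer $\tilde f$ on $V^{\perp} \cong \R^{\floor{p}}$, which is continuous (by Sobolev embedding if $\floor{p} < p$, or by the empty-singular-set conclusion if $\floor{p} = p$), hence constant.

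Once $f \equiv w \in N$ is known, the strong $W^{1,p}$ convergence gives $\fint_{B_1(0)} d(f_i, w)^p \to 0$. For $i$ sufficiently large, the smallness hypothesis of Theorem \ref{th_eps_pHL} is satisfied and we obtain a uniform $C^{1,\alpha}(B_{1/2}(0))$ bound on $f_i$. The bound $\le 1$ is then reached by a second compactness step: by interior $C^{1,\alpha}$ estimates and the strong convergence $f_i \to f$, the $f_i$ actually converge in $C^{1,\alpha}_{\mathrm{loc}}(B_{1/2}(0))$ to the constant $f$, so the relevant $C^{1,\alpha}$ quantity can be made arbitrarily small (in particular $\le 1$) for large $i$, yielding the desired contradiction. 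The main obstacle is Step 3, where one must simultaneously exploit translational symmetry along $V$, scale symmetry, and the sharp dimension bound to rule out nonconstant limits; the rest is standard compactness bookkeeping, but it does rely essentially on the strong $W^{1,p}$ compactness available only in the minimizing category.
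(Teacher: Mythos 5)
Your proposal is correct and follows essentially the same route as the paper: a contradiction--compactness argument using strong $W^{1,p}$ convergence of $p$-minimizers, the fact (deduced from the Hausdorff dimension bound $\dim_{\mathcal{H}}\cS(f)\leq m-\floor{p}-1$ of \cite{pHL}) that every $(m-\floor{p})$-symmetric $p$-minimizer is constant, and then Theorem \ref{th_eps_pHL} plus a second compactness step to upgrade to the norm bound $\leq 1$. The only differences are cosmetic: you spell out the constancy of symmetric minimizers (the paper cites it as a consequence of \cite{pHL}, though it proves the analogous lemma under condition \eqref{A} by exactly your argument), and your final convergence is really in $C^{1,\beta}$ for $\beta<\alpha$ by Arzel\`a--Ascoli (the paper uses $C^{1,\alpha/2}$), which suffices after renaming the H\"older exponent.
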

\begin{proof}
This theorem follows from the previous one and an easy compactness argument.


Suppose by contradiction that this theorem is false. Then there exists a sequence of $p$-minimizing maps $u_i$ and a sequence of $m-\floor{p}$ symmetric maps $h_i$ such that
 \begin{gather*}
  \int_{B_1(0)} \abs {\nabla u_i} ^p dV \leq \Lambda \quad \text{and} \quad \int_{B_1(0)} d(u_i,h_i)^p dV \leq i^{-i}\, ,
 \end{gather*}
 but for which $\int_{B_1(0)}d(u_i,w)^p dV \geq \epsilon$ for all $w\in N$.

Given the compactness of $N$, $h_i$ has a subsequence which converges strongly in $L^p(B_1(0))$ to an $m-\floor{p}$ symmetric function $h$. Moreover, $u_i$ has a subsequence which converges strongly in $W^{1,p}(B_1(0))$ to a $p$-minimizer $u$.

Thus $h=u$ is an $m-\floor{p}$ symmetric $p$-minimizer, which is necessarily constant by \cite{pHL}. The previous theorem then ensures that $u_i$ converges to $h$ also in the sense of $C^{1,\alpha/2}$, and this concludes the proof.
\end{proof}


Under the additional assumptions \eqref{A}, we can improve the previous results and show that any almost $m-a-1$ symmetric map is constant. 
\begin{corollary}\label{cor_eps_+}
 Suppose that condition \eqref{A} holds, and let $u$ be a $p$-minimizing map $u:B_2(0)\to N$ with $\int_{B_2(0)} \abs {\nabla u} ^p dV \leq \Lambda$. There exists constants $\epsilon(\Lambda,m,N,p)>0$ and $\alpha(m,N,p)>0$ such that if $u$ is $(m-a-1,\epsilon,1,0)$-symmetric then $u$ is $(1,\alpha)$-Holder continuous on $B_{1}(0)$ and
\begin{gather*}
 \norm{\nabla u}_{C^{1,\alpha}(B_{1}(0))}\leq 1\, .
\end{gather*}
\end{corollary}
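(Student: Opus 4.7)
The plan is to mimic the proof of Theorem \ref{th_eps_+} verbatim, replacing the role of $\floor{p}$ by $a+1$, and replacing the dimension bound of \cite{pHL} by its strengthening under hypothesis \eqref{A}. Concretely, I would argue by contradiction: suppose the conclusion fails, so that there exist $p$-minimizers $f_i:B_1(0)\to N$ with $\int_{B_1(0)}|\nabla f_i|^p\leq\Lambda$, each $(m-a-1,i^{-1},1,0)$-symmetric via a comparison $(m-a-1)$-symmetric map $h_i$, but such that the $C^{1,\alpha}$ norm of $f_i$ on $B_{1/2}(0)$ exceeds $1$.

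Next I would extract limits. By compactness of $N$ the maps $h_i$ have a subsequence converging strongly in $L^p(B_1(0))$ to an $(m-a-1)$-symmetric map $h$, and by the strong $W^{1,p}$-compactness of minimizers (\cite[Corollary 2.8]{pHL} and \cite{luck}) the $f_i$ subconverge strongly in $W^{1,p}$ to a $p$-minimizer $f$ with $f=h$. Thus $f$ is an $(m-a-1)$-symmetric $p$-minimizer on $B_1(0)$.

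The central step is to show that under \eqref{A} such an $f$ must be constant. Choose coordinates so that the invariant subspace is $\R^{m-a-1}\times\{0\}\subset\R^{m-a-1}\times\R^{a+1}$; then $f(x,y)=g(y)$ for some homogeneous $p$-minimizer $g:\R^{a+1}\to N$. Its singular set $\cS(g)$ is a cone from the origin, so if $g$ were singular at $0$ then $\dim_{\cH}\cS(g)\geq 1$ and $\dim_{\cH}\cS(f)\geq m-a$. On the other hand, the dimension-reduction argument of \cite{SU,pHL} (whose extension to arbitrary $p$ with hypothesis \eqref{A} for $i=\floor p,\ldots,a$ is explicitly noted in the Background section) gives $\dim_{\cH}\cS(f)\leq m-a-2$. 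This forces $g$ to be continuous at the origin, and since any continuous $0$-symmetric map is constant ($g(y)=\lim_{\lambda\to 0^+}g(\lambda y)=g(0)$), $g$ and hence $f$ is constant.

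Finally I would close the contradiction by applying the base $\epsilon$-regularity theorem \ref{th_eps_pHL} to $f_i$: since $f_i\to f\equiv\text{const}$ strongly in $W^{1,p}$, for $i$ large the hypothesis $\int_{B_1(0)}d(f_i,w)^p<\delta(\Lambda,m,N,p)$ is satisfied with $w=f(0)$, yielding uniform $C^{1,\alpha}$ bounds on $B_{1/2}(0)$; a further interpolation or a $C^{1,\alpha/2}$-convergence argument as in the proof of Theorem \ref{th_eps_+} then shows $\|\nabla f_i\|_{C^{1,\alpha}}\leq 1$ for $i$ large, contradicting our assumption. The only delicate point is the dimension-reduction implication in the third paragraph: one must verify that the strengthened dimension estimate under \eqref{A} indeed applies to $(m-a-1)$-symmetric minimizers in the class we are considering, which follows from the standard Federer-type reduction once nonconstant continuous minimizing spheres $S^i\to N$ are ruled out for all $\floor p\leq i\leq a$.
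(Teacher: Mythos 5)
Your overall architecture is the same as the paper's: reduce by compactness to showing that every $(m-a-1)$-symmetric $p$-minimizer is constant, then close via Theorem \ref{th_eps_pHL} and $C^{1,\alpha/2}$-convergence, exactly as the paper does when it says ``the rest of the proof carries over immediately.'' The divergence, and the genuine gap, is in how you establish the rigidity step. The paper proves it directly, in the unlabeled lemma following Corollary \ref{cor_eps_+}: the induced homogeneous minimizer $\tilde h:\R^{a+1}\to N$ either has an isolated singularity at the origin (ruled out by \eqref{A} via the induced map $\hat h:S^a\to N$) or a singularity elsewhere, in which case a tangent map at that point gains one symmetry, and one inducts upward until reaching $(m-\floor p)$-symmetric maps, which are constant by \cite{pHL}. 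You instead invoke as a black box the improved Hausdorff dimension bound $\dim_{\cH}\cS(f)\leq m-a-2$ under \eqref{A} for general $p$, asserting it is ``explicitly noted in the Background section.'' It is not: the Background attributes that improvement only to \cite{SU} in the case $p=2$; for generic $p$ the cited result of \cite{pHL} is only $\dim_{\cH}\cS(f)\leq m-\floor p-1$. Proving the improved bound for general $p$ is precisely the Federer-type reduction you defer to your last sentence --- which is the content of the paper's lemma. So your central step is assumed rather than proved; the deferred verification is not a side remark but the whole point of the corollary's proof.

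There is also a concretely false sub-claim: ``$\cS(g)$ is a cone from the origin, so if $g$ were singular at $0$ then $\dim_{\cH}\cS(g)\geq 1$.'' A homogeneous minimizer can be singular \emph{only} at the origin --- the paper's own example $f(x)=x/\abs{x}$ is exactly such a map --- and $\cS(g)=\{0\}$ is a (degenerate) cone of dimension $0$. The cone argument yields $\dim_{\cH}\cS(g)\geq 1$ only when $g$ is singular at some $y\neq 0$. The repair is easy and does not need your overshoot to $m-a$: if $0\in\cS(g)$, then $\cS(f)$ contains the invariant subspace $\R^{m-a-1}\times\{0\}$, already of dimension $m-a-1>m-a-2$, so any singularity of $g$ whatsoever contradicts the improved bound. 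But note that this repaired dichotomy (isolated origin singularity versus singularity off the axis) is exactly the case split in the paper's inductive lemma, which again shows that granting the improved dimension estimate is equivalent to granting the rigidity you are asked to prove. To make your proposal complete you would need to carry out the tangent-map induction yourself, at which point it coincides with the paper's argument.
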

\begin{proof}
A key element in the proof of the previous theorem is that all minimizing $p$-harmonic maps which are $(m-\floor p)$-symmetric are necessarily constant. In the next lemma, we show using a standard argument that under assumption \eqref{A} any $(m-a-1)$-symmetric minimizing map is constant. The rest of the proof carries over immediately.
\end{proof}

\begin{lemma}
 Under the additional assumptions \eqref{A}, all $(m-a-1)$-symmetric $p$-minimizing maps $h:\R^m\to N$ are constant. 
\end{lemma}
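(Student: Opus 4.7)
The plan is to run a Federer-style dimension reduction, using assumption \eqref{A} to terminate it. Suppose by contradiction that $h:\mathbb{R}^m\to N$ is a nonconstant $(m-a-1)$-symmetric $p$-minimizer. Writing $\mathbb{R}^m=V\oplus V^\perp$ with $\dim V=m-a-1$, the invariance under $V$ lets us descend to a nonconstant homogeneous $p$-minimizer $\tilde h:\mathbb{R}^{a+1}\to N$ whose singular set $\mathcal{S}(\tilde h)$ is a dilation-invariant cone in $\mathbb{R}^{a+1}$. The goal is to show such a $\tilde h$ cannot exist.

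First I would handle the base case where $\mathcal{S}(\tilde h)=\{0\}$. Then $\tilde h$ is $C^{1,\alpha}$ on $\mathbb{R}^{a+1}\setminus\{0\}$, and homogeneity forces $\tilde h$ to be determined by a continuous nonconstant restriction $\tilde h|_{S^a}:S^a\to N$. By a standard argument, $\tilde h|_{S^a}$ is itself a $p$-minimizer in the appropriate sense, so assumption \eqref{A} (applied at $i=a\geq\floor p$) yields a direct contradiction.

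For the general case, I would iterate. At each step, pick a singular point $x_0\neq 0$ of the current homogeneous $p$-minimizer $\tilde h_n:\mathbb{R}^{n}\to N$. Consider any tangent map $g$ at $x_0$: by strong $W^{1,p}$ compactness of minimizers it is itself a nonconstant homogeneous $p$-minimizer at its origin, and the homogeneity of $\tilde h_n$ at $0$ together with the blow-up at $x_0$ produces, via a cone-splitting argument (the analog of Remark \ref{rem_khom} in this limit form: points of $0$-symmetry at $-x_0/r\to\infty$ yield translational symmetry along $\mathbb{R} x_0$), a translational invariance of $g$ in the direction of $x_0$. Quotienting by $\mathbb{R} x_0$ gives a nonconstant homogeneous $p$-minimizer $\tilde h_{n-1}:\mathbb{R}^{n-1}\to N$. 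As long as the singular set of $\tilde h_{n-1}$ is not reduced to $\{0\}$, repeat; after $k\leq a-\floor p$ steps one either lands in the isolated-singularity regime and contradicts \eqref{A} via a nonconstant continuous $p$-minimizer $S^{a-k}\to N$ (with $a-k\in[\floor p, a]$), or reaches dimension $\leq\floor p$, in which case the Dirichlet integral of any nonconstant homogeneous map would diverge at the origin ($\int_0^1 r^{n-1-p}\,dr=\infty$ for $n\leq\floor p$), contradicting finite local $p$-energy.

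The main obstacle is making the iteration step fully rigorous, particularly verifying that (i) the tangent map $g$ inherits the claimed translational symmetry along $\mathbb{R} x_0$ as a clean cone-splitting statement at infinity for homogeneous functions, and (ii) $g$ (hence its quotient $\tilde h_{n-1}$) really is nonconstant---this uses that $x_0$ is a genuine singular point together with the $\epsilon$-regularity Theorem \ref{th_eps_pHL}, which forces the tangent at a singular point to be non-trivial. Once these are in place the iteration terminates in at most $a-\floor p+1$ steps and the lemma follows.
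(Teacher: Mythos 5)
Your proposal is correct and takes essentially the same route as the paper's proof: descend via the $(m-a-1)$-fold invariance to a homogeneous minimizer on $\R^{a+1}$, apply \eqref{A} in the isolated-singularity base case, and otherwise blow up at an off-origin singular point to gain one translational symmetry per step (nonconstancy of the tangent map coming from $\epsilon$-regularity via $\theta_h(x,0)>\epsilon$), terminating because $(m-\floor{p})$-symmetric minimizers are constant. Your only deviation is making the terminal case self-contained through the divergence $\int_0^1 r^{n-1-p}\,dr=\infty$ for $n\leq\floor{p}$, where the paper instead quotes from \cite{pHL} that such maps are constant.
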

\begin{proof}
 Suppose by contradiction that there exists such a map $h$ with a singularity, and let $S$ be its invariant subspace of dimension $\geq m-a-1$. By invariance, the map $h$ induces a minimizing map $\tilde h:\R^{a+1}\to N$. If the origin is the only isolated singularity of $\tilde h$, then it is immediate to obtain a continuous $p$-minimizing map $\hat h:S^{a}\to N$, which is trivial by assumption, thus $h$ would be constant.
 
 We finish the proof by induction. If $h$ has a singularity at $x\not \in S$, then by the $\epsilon$-regularity theorem $\theta_h(x,0)>\epsilon$. Let $h'$ be a tangent map at $x$, thus $h'$ is a nonconstant minimizing map which is easily seen to be invariant both with respect to $S$ and with respect to the subspace generated by $x$. In other words $h'$ is $(m-a)$-symmetric. By the previous argument, $h'$ induces a minimizing map from $\R^a$ to $N$, and this map cannot have an isolated singularity at the origin. If this map had other singularities, by induction we would obtain a minimizing map with one more symmetry. Since $m-\floor p$ symmetric maps are necessarily constant, the proof is finished.
\end{proof}

\subsection{Regularity scale}\label{sec_regscale}
Given the scale-invariant properties of the problem we are focusing on, it is convenient to define some scale-invariant quantities measuring the regularity of the function $u$.
\begin{definition}\label{deph_regscale}
 Let $\alpha=\alpha(m,N,p)>0$ be the one given by Theorem \ref{th_eps_+}. We define the scale-invariant norm $\norm{u}_{x,r}$ of $u$ at the point $x$ at scale $r$ as
 \begin{gather*}
  \norm{u}_{x,r} = \begin{cases} r\sup_{y\in B_r(x)}\cur{\abs {\nabla u(y)} } + r^{1+\alpha}\sup_{z\neq y \in B_r(x)}\cur{\frac{\abs {\nabla u(y)-\nabla u(z)}}{\abs{y-z}^\alpha} },&if\  u\in C^{1,\alpha}(B_r(x))\\
                    +\infty,& otherwise.
                   \end{cases}
 \end{gather*}
 We define also the regularity scale by
 \begin{gather*}
  r_u(x)=\sup_{r\geq 0}\cur{\norm{u}_{x,r} \leq 1}\, .
 \end{gather*}

\end{definition}

\begin{remark}
Note that this definition is scale-invariant, in the sense that $\norm{T^u_{x,r}}_{0,1} = \norm{u}_{x,r}$.  Moreover $\norm{\cdot}_{x,r}$ is monotone in $r$. In particular, if $r\leq s$, then
 \begin{gather*}
  \norm u _{x,r} \leq \norm u_{x,s}\, .
 \end{gather*}
\end{remark}

\begin{definition}\label{deph_cbr}
 Let $u$ be a $p$-minimizing map as in the statement of Theorem \ref{th_main_proof}, and $r>0$. Define the set
 \begin{gather*}
  \cB_r(u)= \cur{x\in B_1(0) \ \ s.t. \ \ \norm{T_{x,r} }_{0,1} = \norm u _{x,r}> 1}=\cur{x\in B_1(0) \ \ s.t. \ \ r_u(x)< r}\, .
 \end{gather*}
\end{definition}

We can restate the $\epsilon$-regularity theorem in the following form.
\begin{theorem}\label{th_eps_hom}
Let $u$ be a $p$-minimizing map $u:B_2(0)\to N$, where $N$ is compact (without boundary) and
 \begin{gather*}
  \int_{B_2(0)}\abs{\nabla u}^p dV \leq \Lambda\, .
 \end{gather*}  Then there exists a positive $\epsilon=\epsilon(m,N,\Lambda,p)$ such that, for all $r\leq 1$,
 \begin{gather*}
  \cS(u)\cap B_1 (0) \subset \cB_r(u) \cap B_1 (0) \subset \cS^{m-\floor{p}-1}_{\epsilon,r} (u)\cap B_1 (0) \, .
 \end{gather*}
Under the additional assumption \eqref{A}, we can improve the previous result to
 \begin{gather*}
  \cS(u)\cap B_1 (0) \subset \cB_r(u) \cap B_1 (0) \subset \cS^{m-a-2}_{\epsilon,r} (u)\cap B_1 (0) \, .
 \end{gather*}

\end{theorem}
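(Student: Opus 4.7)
The plan is to verify the two inclusions separately. The first, $\cS(f) \subset \cB_r(f)$, is immediate from the definitions: if $x \in \cS(f)$ then $f$ fails to be continuous at $x$, so $f$ is certainly not $C^{1,\alpha}$ on any ball around $x$, forcing $\norm{f}_{x, s} = +\infty$ for every $s > 0$ and hence $r_f(x) = 0 < r$.

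For the substantive inclusion $\cB_r(f) \subset \cS^{m-\floor{p}-1}_{\epsilon,r}(f)$, I argue by contrapositive. Assume $x \in B_1(0)$ is not in $\cS^{m-\floor{p}-1}_{\epsilon,r}(f)$, so that there is some scale $s \in [r, 1]$ at which $f$ is $(m-\floor{p}, \epsilon, s, x)$-symmetric. By definition, this is exactly the statement that the blow-up $T_{x,s}$ is $(m-\floor{p}, \epsilon, 1, 0)$-symmetric. Combining the monotonicity formula from Proposition \ref{prop_mono_stat} with the energy bound $\int_{B_1(0)} \abs{\nabla f}^p \leq \Lambda$ yields a universal bound $\int_{B_1(0)} \abs{\nabla T_{x,s}}^p \leq C(m, p, \Lambda)$ on the blow-up. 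The threshold $\epsilon = \epsilon(m, N, \Lambda, p)$ produced by Theorem \ref{th_eps_+} then applies to $T_{x,s}$, yielding $\norm{T_{x,s}}_{C^{1,\alpha}(B_{1/2}(0))} \leq 1$. Reading this through the scale invariance $\norm{T^f_{x,s}}_{0, 1} = \norm{f}_{x, s}$ produces $\norm{f}_{x, s} \leq 1$, so $r_f(x) \geq s \geq r$ and $x \notin \cB_r(f)$. The strengthening to $\cS^{m-a-2}_{\epsilon, r}(f)$ under hypothesis \eqref{A} is identical in structure, with Corollary \ref{cor_eps_+} replacing Theorem \ref{th_eps_+} to extract $C^{1,\alpha}$ regularity from approximate $(m-a-1)$-symmetry rather than approximate $(m-\floor p)$-symmetry.

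The main obstacle is purely a matter of constants in the rescaling step. Read literally, Theorem \ref{th_eps_+} delivers regularity only on $B_{1/2}(0)$, so via $\norm{T^f_{x,s}}_{0, 1/2} = \norm{f}_{x, s/2}$ one directly obtains only $r_f(x) \geq s/2$, giving $\cB_{r/2}(f) \subset \cS^{m-\floor p - 1}_{\epsilon, r}(f)$ rather than the statement as written. This half-scale loss is harmless: one either shrinks $\epsilon$ so that Theorem \ref{th_eps_+} produces a quantitatively small $C^{1,\alpha}$ bound that can be propagated to $B_1(0)$ by a standard bootstrap, or applies the $\epsilon$-regularity at the enlarged scale $2s$ for $s \leq 1/2$ while disposing of the large-scale case $s \in [1/2, 1]$ by a trivial covering.
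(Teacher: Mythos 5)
Your proposal is correct and takes essentially the same route as the paper, whose entire proof is that $\cS(f)\subset \cB_r(f)$ is immediate and that the second inclusion is ``just a scale-invariant form'' of Theorem \ref{th_eps_+} and Corollary \ref{cor_eps_+} --- exactly the contrapositive argument you spell out. The half-scale loss you flag ($r_f(x)\geq s/2$ rather than $s$) is real but silently absorbed by the paper; your observation that it is harmless is right, since the downstream volume estimates are unaffected by replacing $\cB_r(f)\subset \cS^{m-\floor p-1}_{\epsilon,r}(f)$ with $\cB_{r/2}(f)\subset \cS^{m-\floor p-1}_{\epsilon,r}(f)$ or adjusting $\epsilon$.
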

\begin{proof}
 The inclusion $\cS(u)\cap B_1 (0) \subset \cB_r(u)\cap B_1 (0) $ is immediate, while the inclusion
 \begin{gather*}
  \qua{\cS^{m-\floor{p}-1}_{\epsilon,r} (u) \cap B_1 (0) }^C \subset \qua{\cB_r(u) \cap B_1 (0) }^C
 \end{gather*}
is just a scale-invariant form of Theorem \ref{th_eps_+} and Corollary \ref{cor_eps_+}.
\end{proof}

\subsection{Volume estimates on the strata}
By applying the quantitative stratification technique (see \cite{ChNa2}), we now prove effective volume bounds on the singular strata $\cS_{\eta,r}$. In the next section, we will see how these bounds imply effective regularity estimates on the map $u$.
\begin{theorem}\label{th_main_proof}
Let $u$ be a $p$-minimizing map as in the statement of Theorem \ref{th_eps_hom}. 
Then for every $\eta>0$, there exists a constant $C(m,N,\Lambda,p,\eta)$ such that
\begin{gather*}
 \Vol\ton{B_r(\cS_{\eta,r}^k(u))\cap B_{1}(0)}\leq C r^{m-k-\eta}\, .
\end{gather*}
\end{theorem}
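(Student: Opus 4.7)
The plan is to follow the quantitative stratification strategy of \cite{ChNa2}, combining the three ingredients just developed: cone-splitting (Proposition~\ref{prop_conespl}), the energy-pinching characterization of almost $0$-symmetry (Theorem~\ref{th_qrig}), and monotonicity of $\theta$. First the parameters are chosen in the order $\eta \to \tau \to \epsilon \to \delta \to \chi$: given $\eta>0$, pick a tube width $\tau$ (eventually comparable to the scale ratio), apply Proposition~\ref{prop_conespl} with $\eta,\tau$ to obtain $\epsilon$, apply Theorem~\ref{th_qrig} to obtain $\delta$ and an upper threshold $\chi_0$, and finally fix $\chi\leq\chi_0$ (and comparable to $\tau$) small enough, depending only on $\eta$ and a universal covering constant $C_1(m)$ produced below, that $C_1(m)\leq\chi^{-\eta/2}$. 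Theorem~\ref{th_qrig} still applies with this possibly smaller $\chi$, since its hypothesis only becomes more restrictive as $\chi$ decreases.

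I construct inductively, for $j=0,\dots,J$ with $\chi^J\simeq r$, a family $\cF_j$ of balls of radius $r_j=\chi^j$ centered on $\cS^k_{\eta,r}$ and covering it, together with an integer tag $0\leq q\leq Q$ attached to each ball. Each $B_{r_j}(x)\in\cF_j$ is refined into $\cF_{j+1}$ in one of two ways. In the \textbf{good case}, where $f$ is $(0,\epsilon,2r_j,x)$-symmetric, the fact that $x\in\cS^k_{\eta,r_j}$ forces $f$ to NOT be $(k+1,\eta,r_j,x)$-symmetric, so Proposition~\ref{prop_conespl} constrains every $y\in B_{r_j}(x)\cap\cS^k_{\eta,r}$ that is itself $(0,\epsilon,2r_j,y)$-symmetric to the $\chi r_j$-neighborhood of a $k$-plane, covered by at most $C_1(m)\chi^{-k}$ balls of radius $r_{j+1}$ centered on the stratum; the tag is left unchanged. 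In the \textbf{bad case}, where $f$ is not $(0,\epsilon,2r_j,x)$-symmetric, Theorem~\ref{th_qrig} forces $\theta_f(x,2r_j)-\theta_f(x,2\chi r_j)>\delta$; one covers $B_{r_j}(x)$ by the trivial collection of at most $C_2(m)\chi^{-m}$ balls of radius $r_{j+1}$ and increments $q$ by one. Monotonicity of $\theta$ combined with $\theta(x,2)\leq C(m,p)\Lambda$ caps the tag of any ball in $\cF_J$ by $Q=\lceil C(m,p)\Lambda/\delta\rceil$, independently of $J$.

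Summing over the refinement tree gives
\begin{gather*}
 \#\cF_J\leq \sum_{q=0}^{Q}\binom{J}{q}\ton{C_1\chi^{-k}}^{J-q}\ton{C_2\chi^{-m}}^{q}\leq C(Q,\chi,m)(J+1)^Q\,C_1(m)^J\,\chi^{-kJ},
\end{gather*}
whence the Minkowski content obeys
\begin{gather*}
 \Vol\ton{B_r(\cS^k_{\eta,r}(f))\cap B_{1/2}(0)}\leq C(Q,\chi,m)(J+1)^Q\,C_1(m)^J\,r^{m-k}.
\end{gather*}
With $\chi$ chosen as above, $C_1(m)^J\leq\chi^{-\eta J/2}=r^{-\eta/2}$, while $(J+1)^Q\lesssim|\log r|^Q$ is absorbed into the remaining $r^{-\eta/2}$, delivering the announced $Cr^{m-k-\eta}$. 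The main obstacle is the coherent ordering of the parameter choices above, which have a superficially circular interdependence through Propositions~\ref{prop_conespl} and Theorem~\ref{th_qrig}; this is resolved via the monotonicity observation that shrinking $\chi$ preserves the validity of Theorem~\ref{th_qrig} without forcing $\epsilon$ or $\delta$ to change. The Riemannian base-manifold case follows verbatim after substituting Theorem~\ref{th_qrig_riemm} in place of Theorem~\ref{th_qrig} and imposing an initial scale cutoff $r\leq r_0$, absorbed into the constant as already noted in the preceding remarks.
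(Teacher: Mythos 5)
Your architecture---cone-splitting plus energy pinching plus monotonicity, organized into a tree of coverings at scales $\chi^j$ whose branches carry at most $Q\simeq \Lambda/\delta$ bad steps, with the final count $\sum_q \binom{J}{q}(C_1\chi^{-k})^{J-q}(C_2\chi^{-m})^q$ and the absorption of $(J+1)^Q$ into $r^{-\eta/2}$---is the same as the paper's. But there is a genuine gap in the bookkeeping: you attach the good/bad label and the tag to the \emph{ball}, deciding the dichotomy from its center $x$, whereas Proposition~\ref{prop_conespl} confines to the tube only those $y\in B_{r_j}(x)$ that are \emph{themselves} $(0,\epsilon,2r_j,y)$-symmetric. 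In your good case, the stratum points of $B_{r_j}(x)$ that fail to be almost $0$-symmetric at that scale may lie far from the $k$-plane, and your refinement provides no balls for them: the inductive property that $\cF_{j+1}$ covers $\cS^k_{\eta,r}$ is lost at the first step. If you repair this by letting every ball spawn both tube children (tag unchanged) and trivial children (tag incremented), the tag cap then fails: the pinching drops $\theta(\cdot,2r_j)-\theta(\cdot,2\chi r_j)>\delta$ recorded along a branch occur at \emph{different} centers, so no single point's $\theta$ telescopes over the disjoint scale windows, and nothing forces a ball with tag $q>Q$ to be empty; multiplicative comparisons of $\theta$ at nearby points do not transfer an additive drop of size $\delta$.

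The missing device is the paper's pointwise history filtration (the Decomposition Lemma~\ref{lemma_dec}): to each point one attaches the tuple $T^j(x)$ recording at which scales $x$ itself lies in $H_{\gamma^i,\epsilon}(f)$, and the covering is run separately on each set $E(\bar T^j)$, with balls centered in, and covering only, $E(\bar T^j)$. This fixes both problems at once: at a scale where the tuple entry is $0$, every point of the set being tracked---including each ball center---is almost $0$-symmetric, so Lemma~\ref{lemma_cov} applies cone-splitting legitimately and covers all such points; and Lemma~\ref{lemma_K} caps $\abs{\bar T^j}\leq D$ by monotonicity of $\theta$ at a single fixed point, since all the drops encoded in a tuple refer to that same point. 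The count $j^D$ of admissible tuples plays the role of your $\binom{J}{q}$ factor. A secondary flaw is your parameter ordering: you need the tube width $\tau$ at most comparable to the refinement ratio $\chi$ for the $C_1(m)\chi^{-k}$ covering count, but $\epsilon$ depends on $\tau$ and the pinching ratio $\chi_0$ on $\epsilon$; shrinking $\chi$ below $\chi_0$ is indeed free in Theorem~\ref{th_qrig}, but shrinking $\tau$ is \emph{not} free in Proposition~\ref{prop_conespl}, so your loop does not close as claimed. The paper avoids this by fixing the refinement ratio $\gamma=c_0^{-2/\eta}$ and $\tau=\gamma/10$ from $m$ and $\eta$ alone, then applying Theorem~\ref{th_qrig} across $A$ levels with $\gamma^{A}\leq\chi$, at the price of the factor $A$ in the bound \eqref{eq_estK} for $D$.
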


The scheme of the proof is the following: fix $\gamma=c_0^{-2/\eta}>0$, where $c_0=c_0(m)$ is the dimensional geometrical constant appearing in the proof of Lemma \ref{lemma_cov2}. Up to increase the value of $c_0$, we can suppose that $\gamma<1/10$.
We will prove that there exists a covering of $\cS^{k}_{\eta,\gamma^j}$ made of nonempty open sets in the collection $\{\cC^k_{\eta,\gamma^j}\}$. Each set $\cC^k_{\eta,\gamma^j}$ is the union of a controlled number of balls of radius $\gamma^j$.

This will give the desired volume bound. In particular:

\begin{lemma}[Decomposition Lemma]\label{lemma_dec}
There exists $c_0(m),c_1(m)>0$ and $D(m,N,\gamma,\Lambda,p,\eta)>1$ such that for every $j\in \N$:
\begin{enumerate}
 \item $\cS^k_{\eta,\gamma^j}\cap B_{1}(0)$ is contained in the union of at most $j^D$ \textit{nonempty} open sets $\cC^k_{\eta,\gamma^j}$
 \item Each $\cC^k_{\eta,\gamma^j}$ is the union of at most $(c_1\gamma ^{-m})^D (c_0\gamma^{-k})^{j-D}$ balls of radius $\gamma^j$
\end{enumerate}
\end{lemma}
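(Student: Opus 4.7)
This is a standard quantitative-stratification decomposition in the Cheeger–Naber style. The plan is to build the sets $\cC^{k}_{\eta,\gamma^{j}}$ by induction on $j$, refining a covering by balls of radius $\gamma^{j-1}$ into one by balls of radius $\gamma^{j}$, and tracking for each ball whether its center sees an approximate $0$-symmetry (``good'' scale) or a definite drop of the normalized energy $\theta$ (``bad'' scale). The cone-splitting Proposition \ref{prop_conespl} makes good scales cheap, and the monotonicity of $\theta$ together with the total energy bound $\theta_{f}(x,2)\le C(m)\Lambda$ limits the number of bad scales to an absolute constant $D$.

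\textbf{Step 1 (choice of constants).} Pick $\tau$ small (e.g.\ $\tau=1/10$) so that a $\tau \gamma^{j-1}$-tube around a $k$-plane intersected with $B_{\gamma^{j-1}}$ is covered by at most $c_{0}(m)\gamma^{-k}$ balls of radius $\gamma^{j}$ (this is the content of the covering lemma referenced as Lemma \ref{lemma_cov2}, which fixes $c_{0}$). Apply Proposition \ref{prop_conespl} with $\eta$ and this $\tau$ to produce an $\epsilon>0$. Then apply the energy pinching Theorem \ref{th_qrig} with this $\epsilon$ to obtain $\delta>0$ and $\chi\in(0,1/2]$ such that whenever $\theta_{f}(x,r)-\theta_{f}(x,\chi r)\le \delta$, the map $f$ is $(0,\epsilon,r,x)$-symmetric. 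Finally set $D = \lceil C(m)\Lambda/\delta\rceil + K$, where $K=K(\chi)$ accounts for the at most $K$ overlapping geometric sub-sequences of scales needed so that, for each $x$, on each sub-sequence the number of bad scales $j$ with $\theta(x,\gamma^{j-1})-\theta(x,\chi\gamma^{j-1})>\delta$ is bounded by $C(m)\Lambda/\delta$ by telescoping.

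\textbf{Step 2 (inductive refinement).} Start with the trivial covering of $B_{1/2}(0)$ by at most $c_{1}(m)\gamma^{-m}$ balls of radius $\gamma$, labelled arbitrarily. Given a covering $\{B_{\gamma^{j}}(x_{i})\}$ of $\cS^{k}_{\eta,\gamma^{j}}\cap B_{1/2}(0)$, classify each center $x_{i}$ as \emph{good at scale} $j$ if $f$ is $(0,\epsilon,\gamma^{j},x_{i})$-symmetric and \emph{bad} otherwise. At a good center, since $x_{i}\in \cS^{k}_{\eta,\gamma^{j}}$ implies $f$ is not $(k+1,\eta,\gamma^{j},x_{i})$-symmetric, Proposition \ref{prop_conespl} applies and forces $\cS^{k}_{\eta,\gamma^{j}}\cap B_{\gamma^{j}}(x_{i})$ into a $\tau\gamma^{j}$-tube around some $k$-plane $V_{i}$, which is covered by $c_{0}\gamma^{-k}$ balls of radius $\gamma^{j+1}$. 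At a bad center, cover $B_{\gamma^{j}}(x_{i})$ trivially by $c_{1}\gamma^{-m}$ balls of radius $\gamma^{j+1}$. Crucially, the monotonicity of $r\mapsto \cS^{k}_{\eta,r}$ in $r$ means these new balls still cover $\cS^{k}_{\eta,\gamma^{j+1}}\cap B_{1/2}(0)$.

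\textbf{Step 3 (book-keeping and the main obstacle).} Each ball at scale $\gamma^{j}$ inherits a word $w\in\{G,B\}^{j}$ recording the good/bad labels of its ancestors. Let $\cC^{k,w}_{\eta,\gamma^{j}}$ be the union of balls with the same word $w$; these are the sets $\cC^{k}_{\eta,\gamma^{j}}$ of the lemma. By Step 2, $|\cC^{k,w}_{\eta,\gamma^{j}}|\le (c_{1}\gamma^{-m})^{|B|_{w}}(c_{0}\gamma^{-k})^{|G|_{w}}$, so it remains to show that (a) the only words that can occur have $|B|_{w}\le D$, and (b) the number of such words is at most $j^{D}$. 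Part (a) is the heart of the argument: when a ball is labelled bad at scale $j$, Theorem \ref{th_qrig} forces $\theta(x_{i},\gamma^{j})-\theta(x_{i},\chi\gamma^{j})>\delta$, and summing over the corresponding geometric sub-sequence of scales (there are only $K$ such sub-sequences) gives at most $C(m)\Lambda/\delta$ bad scales per sub-sequence, hence at most $D$ in total by choice of $D$. Part (b) is then combinatorial: $\#\{w\in\{G,B\}^{j}:|B|_{w}\le D\}\le \binom{j}{\le D}\le j^{D}$ (absorbing a constant into $D$ if needed). The main technical obstacle is precisely (a)---in particular, matching the discrete scale $\gamma$ with the intrinsic scale ratio $\chi$ coming from energy pinching---which is why the constant $D$ is defined via a slightly enlarged bound to absorb the finitely many shifted sub-sequences needed to make the telescoping argument go through.
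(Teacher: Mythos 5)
Your overall architecture (good/bad scales, cone-splitting at good scales, energy pinching plus monotonicity to bound the number of bad scales, words counted by $\binom{j}{\le D}\le j^D$) is the right one, but Step 2 contains a genuine gap in how cone-splitting is applied. Proposition \ref{prop_conespl} does \emph{not} say that $\cS^k_{\eta,\gamma^j}\cap B_{\gamma^j}(x_i)$ lies in a tube when the center $x_i$ is good; its conclusion confines only the set $\{y:\ f \text{ is } (0,\epsilon,2r,y)\text{-symmetric}\}\cap B_r(x_i)$. A point $y\in\cS^k_{\eta,\gamma^j}\cap B_{\gamma^j}(x_i)$ that is \emph{not} almost $0$-symmetric at that scale is completely unconstrained by the proposition, so your tube covering of $c_0\gamma^{-k}$ balls can miss it and the inductive invariant (that the new balls still cover the stratum) breaks. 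This is exactly why the paper attaches the good/bad label to the \emph{points} rather than to the centers of covering balls: it defines $T^j(x)$ as a function of $x$, partitions $B_{1/2}(0)$ into the sets $E(\bar T^j)$ of points with identical label history, and at each stage covers only $\cS^k_{\eta,\gamma^j}\cap E(\bar T^a)$. Then, at a scale $a$ with $\bar T^j_a=0$, \emph{every} point of the set being covered is almost symmetric at that scale, so a point off the tube would itself witness the spread of almost-symmetric points and force $(k+1,\eta,\cdot)$-symmetry, contradicting membership in $\cS^k_{\eta,\gamma^j}$. The $j^D$ count of sets in point 1 of the lemma is precisely the count of admissible tuples $\bar T^j$ — in your ball-centered version that count has no analogous justification.

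The same per-point versus per-center confusion undermines your bad-scale count in Step 3(a). The telescoping inequality $\sum_i\big(\theta(x,r_i)-\theta(x,s_i)\big)\le \theta(x,1/3)\le C(m,p,\Lambda)$ holds for a \emph{fixed} point $x$ and disjoint intervals; along an ancestry chain of balls the centers $x_a$ change from scale to scale, and the drops $\theta(x_a,\gamma^a)-\theta(x_a,\chi\gamma^a)$ at varying centers do not telescope without additional comparison estimates between $\theta$ at nearby points. With the paper's point-based tuples this issue disappears: Lemma \ref{lemma_K} bounds the number of bad scales of the single point $x\in E(\bar T^j)$. Two smaller points: your constant $D=\lceil C(m)\Lambda/\delta\rceil+K$ has the wrong arithmetic — partitioning the scales into $A$ arithmetic subsequences (with $\gamma^A\le\chi$, your $K$) to obtain disjoint intervals gives the \emph{multiplicative} bound $D\le A\,C(m,p,\Lambda)/\delta$ as in \eqref{eq_estK}, not an additive correction; and the symmetry in the label should be tested at a scale slightly larger than the ball radius (the paper uses $r/(5\gamma)$ in $L_{r,\epsilon}$) so that cone-splitting can be run on the parent ball $B_{\gamma^{a-1}}$ while the resulting $(k+1,\eta,10^{-1}\gamma^{a-1},\cdot)$-symmetry still contradicts $x\in\cS^k_{\eta,\gamma^j}$ at a scale in $[\gamma^j,1]$.
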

Once this lemma is proved, Theorem \ref{th_main_proof} follows easily.
\begin{proof}[Proof of Theorem \ref{th_main_proof}]
Since we have a covering of $\cS^k_{\eta,\gamma^j}\cap B_{1}(0)$ by balls of radius $\gamma^j$, it is easy to get a covering of $B_{\gamma^j}\ton{\cS^k_{\eta,\gamma^j}}\cap B_{1}(0)$. In fact it is sufficient to double the radius of the original balls. Now it is evident that:
\begin{gather*}
 \Vol\qua{B_{\gamma^j}\ton{\cS^k_{\eta,\gamma^j}}\cap B_{1}(0)} \leq j^D \ton{(c_1\gamma^{-m})^D (c_0\gamma^{-k})^{j-D}} \omega_m 2^m \ton{\gamma^j}^m
\end{gather*}
where $\omega_m$ is the volume of the $m$-dimensional unit ball. By plugging in the simple rough estimates
\begin{gather*}
 j^D \leq c(m,N,\Lambda,p,\eta)\ton{\gamma^j}^{-\eta/2}\, ,\\
\notag (c_1\gamma^{-m})^D(c_0\gamma^{-k})^{-D}\leq c(m,N,\Lambda,p,\eta)\, ,
\end{gather*}
and using the definition of $\gamma$, we obtain the desired result.
\end{proof}

\paragraph{Proof of the Decomposition Lemma}
Now we turn to the proof of the Decomposition Lemma. In order to do this, we define a new quantity which measures the non-homogeneity of $u$ at a certain scale.

Given $u$ as in Theorem \ref{th_main_proof} and $\epsilon>0$, we divide the set $B_{1}(0)$ into two subsets according to the behaviour of the points with respect to their quantitative symmetry. In particular, define
\begin{align*}
L_{r,\epsilon}(u)=&\{x\in B_{1}(0) \ s.t.\ u \ \ is \ \ (0,\epsilon,{r}/\ton{5\gamma},x)\text{-symmetric}\}\, ,\\
H_{r,\epsilon}(u)=&L_{r,\epsilon}(u)^C\, .
\end{align*}
Next, to each point $x\in B_{1}(0)$ we associate a $j$-tuple $T^j(x)$ of numbers $\{0,1\}$ in such a way that the $i$-th entry of $T^j(x)$ (which will be denoted by $T^j_i(x)$) is $1$ if $x\in H_{\gamma^i, \epsilon}(u)$, and zero otherwise. Then, for each fixed $j$-tuple $\bar T^j$, set:
\begin{gather*}
 E(\bar T^j) = \{x\in B_{1}(0) \ \ s.t. \ \ T^j(x)=\bar T^j\}
\end{gather*}
Also, we denote by $T^{ j-1}$, the $(j - 1)$-tuple obtained from $T^ j$ by dropping the last entry, and set $\abs{T^j}$ to be number of $1$ in the $j$-tuple $T^j$, i.e., $\abs{T^j(x)} = \sum_{i=1}^j T^j_i(x)$.

We will build the families $\{\cC^k_{\eta,\gamma^a}\}$ by induction on $a=0,\cdots,j$ in the following way.
For $a=0$, $\{\cC^k_{\eta,\gamma^0}\}$ consists of the single ball $B_{1}(0)$.

\paragraph{Induction step}
For fixed $a\leq j$, suppose that by induction we have already built the family $\cur{\cC^k_{\eta,\gamma^{a-1}}}$, and consider all the $2^a$ $a$-tuples $\bar T^a$. Label the sets of balls in the family $\{\cC^k_{\eta,\gamma^a}\}$ by all the possible $a$-tuple $\bar T^a$. We will build $\cC^k_{\eta,\gamma^a}(\bar T^a)$ inductively as follows.  For each ball $B_{\gamma^{a-1}}(y)$ in $\{\cC^k_{\eta,\gamma^{a-1}}(\bar T^{a-1})\}$ take a minimal covering of $B_{\gamma^{a-1}}(y)\cap \cS^{k}_{\eta,\gamma^j} \cap E(\bar T^a)$ by balls of radius $\gamma^a$ centered at points in $B_{\gamma^{a-1}}(x)\cap \cS^k_{\eta,\gamma^j}\cap E(\bar T^a)$. Note that it is possible that for some $a$-tuple $\bar T^a$, the set $E(\bar T^a)$ is empty, and in this case $\{\cC^k_{\eta,\gamma^{a}}(\bar T^{a})\}$ is the empty set.

Now we need to prove that the minimal covering satisfies points 1 and 2 in Lemma \ref{lemma_dec}. We will do this in the next three lemmas.
\begin{remark}
For the moment let $\epsilon>0$ be an arbitrary fixed small quantity. Its value will be chosen in order to apply Proposition \ref{prop_conespl} with $\eta$ as in the statement of Theorem \ref{th_main_proof} and $\tau=10^{-1}\gamma$.
\end{remark}

\paragraph{Point 1 in Lemma}
As we will see below, we can use the monotonicity of $\theta$ to prove that for every $\bar T^j$, $E(\bar T^j)$ is empty if $\abs{\bar T^j}\geq D$. Since for every $j$ there are at most $\binom j D\leq j^D$ choices of $j$-tuples which do not satisfy such a property, the first point will be proved.

\begin{lemma}\label{lemma_K}
 There exists  $D=D(\epsilon,\gamma,m,N,\Lambda,p)$ such that $E(\bar T^j)$ is empty if $\abs{\bar T^j}\geq D$.
\end{lemma}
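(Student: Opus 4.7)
\textbf{Proof plan for Lemma \ref{lemma_K}.} The strategy is to use Theorem \ref{th_qrig} (energy pinching implies almost homogeneity) in its contrapositive form, combined with the monotonicity of $\theta$ from Proposition \ref{prop_mono_stat}, to convert each ``bad'' scale in the tuple $\bar T^j$ into a definite drop of the normalized energy. Since the total energy drop is bounded in terms of $\Lambda$, only finitely many such drops are possible. Concretely, suppose $x\in E(\bar T^j)$ and let $i_1<i_2<\cdots<i_N$ (with $N=|\bar T^j|$) be the indices where $\bar T^j(i)=1$, i.e., where $x\in H_{\gamma^{i},\epsilon}(f)$. By definition, $f$ is NOT $(0,\epsilon,\gamma^{i-1}/5,x)$-symmetric for each such $i$. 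Applying Theorem \ref{th_qrig} with the fixed threshold $\epsilon$, we obtain constants $\delta=\delta(\epsilon,m,N,\Lambda,p)>0$ and $\chi=\chi(\epsilon,m,N,\Lambda,p)\in(0,1/2]$ such that at each bad scale
\begin{gather*}
\theta_f(x,\gamma^{i-1}/5)-\theta_f(x,\chi \gamma^{i-1}/5)\geq \delta.
\end{gather*}

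The main technical point is to make these energy drops additive. Since $\chi$ and $\gamma$ are chosen independently, we cannot in general guarantee that consecutive bad annuli are disjoint, so I would pick out a sparse subsequence of the bad indices. Choose the integer $K=K(\gamma,\chi)=\lceil \log\chi/\log\gamma\rceil$, so that $\gamma^{K}\leq \chi$. Selecting indices greedily from $\{i_1,\dots,i_N\}$ with consecutive gaps at least $K$, I obtain a subset of at least $\lceil N/K\rceil$ bad indices $i_{k_1}<i_{k_2}<\cdots$ with the property that $\gamma^{i_{k_{\ell+1}}-1}/5\leq \chi\gamma^{i_{k_\ell}-1}/5$. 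Consequently the annuli $B_{\gamma^{i_{k_\ell}-1}/5}(x)\setminus B_{\chi\gamma^{i_{k_\ell}-1}/5}(x)$ are pairwise disjoint, and telescoping via the monotonicity of $\theta$ yields
\begin{gather*}
\theta_f(x,1/5)-\lim_{r\to 0^+}\theta_f(x,r)\;\geq\;\sum_{\ell}\bigl[\theta_f(x,\gamma^{i_{k_\ell}-1}/5)-\theta_f(x,\chi\gamma^{i_{k_\ell}-1}/5)\bigr]\;\geq\;\bigl\lceil N/K\bigr\rceil\,\delta.
\end{gather*}

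Since $x\in B_{1/2}(0)$ gives $B_{1/5}(x)\subset B_1(0)$, the left-hand side is bounded above by $\theta_f(x,1/5)\leq 5^{m-p}\Lambda$. Rearranging,
\begin{gather*}
N\;\leq\;K\cdot\frac{5^{m-p}\Lambda}{\delta}\;=:\;D(\epsilon,\gamma,m,N,\Lambda,p),
\end{gather*}
which is exactly the required bound. Hence $E(\bar T^j)$ is empty whenever $|\bar T^j|\geq D+1$, completing the proof. I expect the main obstacle, which is already addressed above, to be precisely the mismatch between the parameters $\gamma$ (fixed by $\eta$) and $\chi$ (produced by the pinching theorem and typically much smaller than $\gamma$): without the extraction of a sparse subsequence, the bad annuli could overlap and destroy the additivity. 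For the Riemannian version one would additionally invoke Theorem \ref{th_qrig_riemm}, requiring $\gamma^{i-1}\leq r_0$; the finitely many excluded coarse scales only increase $D$ by an additive constant depending on $r_0$.
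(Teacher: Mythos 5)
Your proof is correct and takes essentially the same approach as the paper: both convert each bad scale into a definite energy drop $\delta$ via Theorem \ref{th_qrig} and bound the number of drops using the monotonicity of $\theta$ and the total energy bound, with your gap parameter $K=\lceil\log\chi/\log\gamma\rceil$ playing exactly the role of the paper's integer $A$ with $\gamma^{A}\leq\chi$. The only cosmetic difference is that you secure additivity by greedily extracting a sparse subsequence of bad indices, whereas the paper works with the overlapping intervals $(\gamma^{i+A-1}/5,\gamma^{i-1}/5)$ and absorbs the overlap into the factor $A$ in the final bound $D\leq A\,C(m,p,\Lambda)/\delta$.
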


\begin{proof}
For $s<r$, we set
\begin{gather*}
 \cW_{s,r}(x)=\theta(x,r)-\theta (x,s)\geq 0\, .
\end{gather*}
If $(s_i,r_i)$ are \textit{disjoint} intervals with $\max\{r_i\}\leq 1/3$, then by monotonicity of $\theta$
\begin{gather}\label{eq_sum}
 \sum_i \cW_{s_i,r_i}(x)\leq \theta(x,1/3) -\theta(x,0) \leq C(m,p,\Lambda)\, .
\end{gather}

Let $\chi=\chi(m,N,\Lambda,p,\epsilon)$ be given by Theorem \ref{th_qrig} and let $A\in \N$ be such that $\gamma^{A} \leq \chi$.
Consider intervals of the form $(\gamma^{i-1}/5,  \gamma^{i+A-1}/5)$ for $i=1,2,...\infty$. By Theorem \ref{th_qrig}, there exists a $\delta$ independent of $x$ such that
\begin{gather*}
 \cW_{\gamma^{i-1}/5,\gamma^{i+A-1}/5}(x)\leq \delta \implies u \text{ is }(0,\epsilon,\frac{\gamma^{i-1}}{5},x)\text{-symmetric}\, .
\end{gather*}
in particular $x\in L_{ \gamma^{i},\epsilon}$, so that, if $i\leq j$, the $i$-th entry of $T^j$ is necessarily zero. By equation \eqref{eq_sum}, there can be only a finite number of $i$'s such that $\cW_{\gamma^{i-1}/5, \gamma^{i+A-1}/5}(x)>\delta$, and this number $D$ is bounded by:
\begin{gather}\label{eq_estK}
 D\leq A\frac{C(m,p,\Lambda)}{\delta}\, .
\end{gather}
This completes the proof.
\end{proof}

\paragraph{Point 2 in Lemma}
The proof of the second point in Lemma \ref{lemma_dec} is mainly based on Proposition \ref{prop_conespl}. In particular, for fixed $k$ and $\eta$ in the definition of $\cS^k_{\eta,\gamma^j}$, $\epsilon$ is chosen in such a way that Proposition \ref{prop_conespl} can be applied with $\tau = 10 ^{-1}\gamma$. Then we can restate the lemma as follows:
\begin{lemma}\label{lemma_cov}
 Let $\bar T^j_a =0$. Then the set $G=\cS^{k}_{\eta,\gamma^j}\cap B_{\gamma^{a-1}}(x)\cap E(\bar T^j)$ can be covered by $c_0(m)\gamma^{-k}$ balls centered in $G$ of radius $\gamma^{a}$.
\end{lemma}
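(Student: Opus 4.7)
The plan is to use the cone-splitting Proposition \ref{prop_conespl} to confine $G$ inside a thin tubular neighborhood of a $k$-dimensional affine subspace, and then to cover that tube by balls of radius $\gamma^a$. The crucial consequence of the hypothesis $\bar T^j_a=0$ is that every $y\in G\subseteq E(\bar T^j)$ lies in $L_{\gamma^a,\epsilon}(f)$, i.e.\ $f$ is $(0,\epsilon,\gamma^{a-1}/5,y)$-symmetric. This is exactly the uniform almost $0$-symmetry at scale $2\rho$ required by Proposition \ref{prop_conespl} once we set $\rho:=\gamma^{a-1}/10$.

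Since $G\subseteq B_{\gamma^{a-1}}(x)$ and the ratio $\gamma^{a-1}/\rho=10$ is a purely dimensional number, I would first cover $G$ by a bounded number (depending only on $m$) of balls $B_\rho(x_i)$ with $x_i\in G$ via a standard Vitali-type argument. The problem then reduces to covering each slice $G\cap B_\rho(x_i)$ with $\lesssim\gamma^{-k}$ balls of radius $\gamma^a$ centered in $G$. On such a slice, I apply Proposition \ref{prop_conespl} at $x_i$ with radius $\rho$, symmetry threshold $\eta$ (the one fixed in the statement of Theorem \ref{th_main_proof}) and geometric parameter $\tau=\gamma/10$; the value of $\epsilon$ in the outer decomposition scheme was selected precisely so that this application is legitimate. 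The two hypotheses are satisfied: (i) $x_i\in \cS^k_{\eta,\gamma^j}$ together with $\rho\in[\gamma^j,1]$ (which holds since $a\leq j$) prevents $f$ from being $(k+1,\eta,\rho,x_i)$-symmetric, and (ii) the opening observation says $f$ is $(0,\epsilon,2\rho,x_i)$-symmetric. The conclusion is the existence of a $k$-dimensional affine plane $V_i$ such that
\[
  G\cap B_\rho(x_i)\subseteq T_{\tau\rho}(V_i)=T_{\gamma^a/100}(V_i).
\]

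To finish, the tube $T_{\gamma^a/100}(V_i)\cap B_\rho(x_i)$ is a $k$-dimensional thin slab of length $\sim\rho=\gamma^{a-1}/10$, and a standard lattice argument covers it by at most $c(m)\gamma^{-k}$ Euclidean balls of radius $\gamma^a/2$. For every such ball that meets $G$, I pick a point in the intersection and enlarge the radius to $\gamma^a$: by the triangle inequality the enlarged ball contains the original one, and its center lies in $G$ as required. Summing over the dimensionally bounded family of $x_i$'s yields a covering of $G$ by at most $c_0(m)\gamma^{-k}$ balls of radius $\gamma^a$ centered in $G$, which is the statement of the lemma.

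The main obstacle is bookkeeping rather than ideas: one must fix $\epsilon=\epsilon(m,N,\Lambda,p,\eta,\tau)$ from Proposition \ref{prop_conespl} \emph{before} running the decomposition, and verify consistency of the three distinct scales appearing here — the $\gamma^{a-1}/5$ hard-wired into the definition of $L_{r,\epsilon}$, the scale $2\rho=\gamma^{a-1}/5$ needed to invoke cone-splitting, and the final covering scale $\gamma^a$ — together with the use of $\tau=\gamma/10$ and the requirement $\gamma<1/10$ noted in the setup. Once these are aligned the argument reduces to one application of quantitative cone-splitting followed by a tube covering, and the dimensional constant $c_0(m)$ absorbs the finite Vitali overhead.
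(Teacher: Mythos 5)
Your proposal is correct and follows essentially the same route as the paper: membership in $L_{\gamma^a,\epsilon}$ supplies the $(0,\epsilon,\gamma^{a-1}/5,\cdot)$-symmetry, Proposition \ref{prop_conespl} with $\tau=\gamma/10$ confines $G$ to a $\sim\gamma^a$-tube around a $k$-plane, and a standard covering of the tube by $\sim\gamma^{-k}$ balls of radius $\gamma^a$, recentered in $G$, finishes the count. Your only deviation is the preliminary Vitali covering of $B_{\gamma^{a-1}}(x)$ by dimensionally many balls of radius $\rho=\gamma^{a-1}/10$ so that the cone-splitting radius exactly matches the symmetry scale $2\rho=\gamma^{a-1}/5$; this is actually a careful repair of a factor-of-$10$ scale mismatch that the paper's proof glosses over (it applies the proposition at radius $10^{-1}\gamma^{a-1}$ but asserts the tube containment on all of $B_{\gamma^{a-1}}(x)$), at the harmless cost of finitely many planes $V_i$ instead of one, absorbed into $c_0(m)$.
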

\begin{proof}
 First of all, note that since $\bar T^j_a =0$, all the points in $E(\bar T^j)$ are in $L_{\epsilon, \gamma^a}(u)$.

The set $G$ is contained in $B_{10^{-1}\gamma^a}(V^k)\cap B_{\gamma^{a-1}}(x)$ for some $k$-dimensional subspace $V^k$.
Indeed, if there were a point $x\in G$, such that $x\not\in B_{10^{-1} \gamma^a}(V^k)\cap B_{\gamma^{a-1}}(x)$,
then by Proposition \ref{prop_conespl} (applied with $\tau=10^{-1}\gamma$ and $r=10^{-1}\gamma^{a-1}$) the map $u$ would be $(k+1,\eta,10^{-1}\gamma^{a-1},x)$-symmetric.
Since $10^{-1}\gamma^{a-1}>\gamma^j$, this contradicts $x\in \cS^k_{\eta,\gamma^j}$.
It is standard geometry that $V^k \cap B_{\gamma^{a-1}}(x)$ can be covered by $c_0(m)\gamma^{-k}$ balls of radius $\frac 9 {10} \gamma^a$,
and by the triangle inequality it is evident that the same balls with radius $\gamma^a$ cover the whole set $G$.
\end{proof}

If instead $\bar T^j_a =1$, then without any effort we can say that $G=\cS^{k}_{\eta,\gamma^j}\cap B_{\gamma^{a-1}}(x)\cap E(\bar T^j)$ can be covered by $c_1(m)\gamma^{-m}$ balls of radius $\gamma^a$. Now by a simple induction argument the proof is complete.
\begin{lemma}\label{lemma_cov2}
 Each (nonempty) $\cC^k_{\eta,\gamma^j}$ is the union of at most $(c_1\gamma ^{-m})^D (c_0\gamma^{-k})^{j-D}$ balls of radius $\gamma^j$.
\end{lemma}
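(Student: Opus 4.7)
The plan is a direct induction on $j$, following the recursive construction of the family $\{C^k_{\eta,\gamma^a}(\bar T^a)\}_{a=0}^{j}$ described just before the statement. Fix a $j$-tuple $\bar T^j$ for which $C^k_{\eta,\gamma^j}(\bar T^j)$ is nonempty; by Lemma \ref{lemma_K} this forces $s:=|\bar T^j|\leq D$. I will show by induction on the level $a=0,1,\dots,j$ that $C^k_{\eta,\gamma^a}(\bar T^a)$ consists of at most
$$\prod_{i=1}^{a}\kappa_i\quad \text{balls of radius }\gamma^a,$$
where $\kappa_i=c_0\gamma^{-k}$ if $\bar T^j_i=0$ and $\kappa_i=c_1\gamma^{-m}$ if $\bar T^j_i=1$. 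The base case $a=0$ is immediate since $C^k_{\eta,\gamma^0}=\{B_1(0)\}$ is a single ball.

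For the inductive step, each parent ball $B_{\gamma^{a-1}}(y)$ in $C^k_{\eta,\gamma^{a-1}}(\bar T^{a-1})$ is subcovered by a minimal cover of $G=\cS^k_{\eta,\gamma^j}\cap B_{\gamma^{a-1}}(y)\cap E(\bar T^j)$ by balls of radius $\gamma^a$ centered in $G$. There are two cases. If $\bar T^j_a=0$, Lemma \ref{lemma_cov} applies directly and produces at most $c_0(m)\gamma^{-k}$ such balls. If $\bar T^j_a=1$, no symmetry information is available at that scale, and I simply use the standard Euclidean volume-packing estimate: any subset of a ball of radius $\gamma^{a-1}$ in $\R^m$ can be covered by at most $c_1(m)\gamma^{-m}$ balls of radius $\gamma^a$ (for instance, take a maximal $\gamma^a$-separated subset and double the radii). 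Multiplying the per-parent-ball bound $\kappa_a$ by the inductive bound on the number of parent balls yields the claim at level $a$.

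Taking $a=j$, the total number of balls making up $C^k_{\eta,\gamma^j}(\bar T^j)$ is at most
$$(c_1\gamma^{-m})^{s}(c_0\gamma^{-k})^{j-s}.$$
To convert this into the uniform bound in the statement, I observe that, up to enlarging the constants, we may assume $c_1\geq c_0$; combined with $m\geq k$ and $\gamma<1$, this gives $c_1\gamma^{-m}\geq c_0\gamma^{-k}$, so the product above is monotone nondecreasing in $s$. Since $s\leq D$ by Lemma \ref{lemma_K}, we conclude
$$(c_1\gamma^{-m})^{s}(c_0\gamma^{-k})^{j-s}\leq (c_1\gamma^{-m})^{D}(c_0\gamma^{-k})^{j-D},$$
which is the desired estimate.

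There is essentially no serious obstacle: the two per-step covering bounds have already been established (the nontrivial one being Lemma \ref{lemma_cov}, itself reliant on the cone-splitting Proposition \ref{prop_conespl}), and Lemma \ref{lemma_K} supplies the bound $s\leq D$ that limits the number of "expensive" scales. The only bookkeeping point to be careful about is the monotonicity argument at the end, which requires the mild normalization $c_1\geq c_0$; this costs nothing since both constants are purely dimensional and can be enlarged freely.
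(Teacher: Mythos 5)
Your proof is correct and follows essentially the same route as the paper's: an induction through the scales of the recursive construction, applying Lemma \ref{lemma_cov} (cost $c_0\gamma^{-k}$ per parent ball) at scales with $\bar T^j_a=0$, a trivial Euclidean packing bound (cost $c_1\gamma^{-m}$) at scales with $\bar T^j_a=1$, and Lemma \ref{lemma_K} to bound the number of expensive scales by $D$. The only difference is that you make explicit the normalization $c_1\geq c_0$ (ensuring $c_1\gamma^{-m}\geq c_0\gamma^{-k}$, since $k\leq m$ and $\gamma<1$) needed to pass from $\abs{\bar T^j}\leq D$ to the stated uniform bound, a bookkeeping point the paper leaves implicit.
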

\begin{proof}
 Fix a sequence $\bar T^j$ and consider the set $\cC^k_{\eta,\gamma^j}(\bar T^j)$. By Lemma \ref{lemma_K}, we can assume that $\abs {\bar T^j}\leq D$, otherwise there is nothing to prove since $\cC^k_{\eta,\gamma^j}(\bar T^j)$ would be empty.

Consider that for each step $a$, if $\bar T^j_a=0$, in order to get a (minimal) covering of $B_{\gamma^{a-1}}(x)\cap \cS^{k}_{\eta,\gamma^i}\cap E(\bar T^j) $ for $B_{\gamma^{a-1}}(x)\in \cC^k_{\eta,\gamma^{a-1}}(\bar T^j)$, we require at most $(c_0 \gamma^{-k})$ balls of radius $\gamma^{a}$. If $\bar T^j_a=1$, we need $(c_1\gamma^{m})$ balls. Since the latter situation can occur at most $D$ times, the proof is complete.
\end{proof}

\subsection{Regularity estimates}
In this section, we collect the main theorems for minimizing maps. As anticipated in the introduction, we obtain estimates not only on the singular set, but also quantitative estimates on the regularity scale and, as a corollary, sharp integrability conditions for the minimizers $u$.

First of all, we stress that the regularity properties of the minimizers strongly depend on $p$ and $m$. For example, it is well known that if $m\leq \floor{p}$, then all $p$-minimizers have no singular points, and thus are $C^{1,\alpha}$ functions. Moreover, as shown in \cite[section 3.6]{simon}, one can prove that there exist uniform $C^{1,\alpha}$ bounds on $u$ depending only on $m,\Lambda,N$.. In the following theorem, we give a short proof of this statement.
\begin{theorem}\label{th_m<p}
 If $m\leq \floor{p}$, then there exists a constant $C(p,\Lambda,N)$ such that
 \begin{gather*}
  \int_{B_2(0)} \abs{\nabla u}^p\leq \Lambda \quad \Longrightarrow \quad \norm{\nabla u}_{C^{0,\alpha}(B_{1}(0))}\leq C\, .
 \end{gather*}
 Under the additional assumption \eqref{A}, if $m\leq a+1$, then there exists a constant $C(p,\Lambda,N)$ and an exponent $\alpha(m,N,p)>0$ such that
 \begin{gather*}
  \int_{B_2(0)} \abs{\nabla u}^p\leq \Lambda \quad \Longrightarrow \quad \norm{\nabla u}_{C^{0,\alpha}(B_{1}(0))}\leq C\, .
 \end{gather*}
\end{theorem}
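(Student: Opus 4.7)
The main observation is that when $m\leq\floor p$, or $m\leq a+1$ under \eqref{A}, every homogeneous $p$-minimizing map $h:\R^m\to N$ must be constant. Coupled with the almost-homogeneity Theorem \ref{th_qrig} and the $\epsilon$-regularity Theorem \ref{th_eps_pHL}, this yields a uniform $C^{1,\alpha}$ bound via a finite-covering argument.

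\textbf{Step 1 (Rigidity of homogeneous maps).} First, I would show that any homogeneous $h\in W^{1,p}_{loc}(\R^m,N)$ is constant in the relevant dimensions. For $m\leq\floor p$ this is just a Sobolev-exponent count: by homogeneity $\abs{\nabla h(x)}^p=\abs x^{-p}\abs{\nabla_{S^{m-1}}h(x/\abs x)}^p$, and integrating in polar coordinates gives
\begin{gather*}
 \int_{B_1(0)}\abs{\nabla h}^p\,dV=\ton{\int_0^1 r^{m-1-p}\,dr}\ton{\int_{S^{m-1}}\abs{\nabla_{S^{m-1}}h}^p\,d\sigma}\, ,
\end{gather*}
whose radial factor diverges exactly when $m\leq p$, forcing $\nabla_{S^{m-1}}h\equiv 0$. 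Under \eqref{A} with $m\leq a+1$, the same conclusion instead follows from the lemma stated right after Corollary \ref{cor_eps_+}: since $m-a-1\leq 0$, any homogeneous map is trivially $(m-a-1)$-symmetric, and that lemma forces it to be constant.

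\textbf{Step 2 (Uniform almost-$0$-symmetric scale).} Fix $\epsilon>0$ small enough to apply the $\epsilon$-regularity Theorem \ref{th_eps_pHL}, and let $\delta,\chi$ be the corresponding constants from Theorem \ref{th_qrig}. Set $K=\lceil\Lambda/\delta\rceil+1$ and $r_0=\chi^K$. By monotonicity of $\theta_f(x,\cdot)$ combined with $\theta_f(x,1)\leq\Lambda$, the telescoping sum $\sum_{k=1}^{K}\qua{\theta_f(x,\chi^{k-1})-\theta_f(x,\chi^k)}\leq\Lambda$ forces at least one term to be $\leq\delta$, so Theorem \ref{th_qrig} yields a scale $r(x)\in[r_0,1]$ at which $f$ is $(0,\epsilon,r(x),x)$-symmetric, with $r_0$ uniform in $x\in B_{1/2}(0)$.

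\textbf{Step 3 (Applying $\epsilon$-regularity and covering).} A compactness argument modelled on the proof of Theorem \ref{th_eps_+} combines Step 1 with this $(0,\epsilon,r(x),x)$-symmetry to show that, for $\epsilon$ small enough, $f$ is in fact $L^p$-close to a genuine constant $w_x\in N$ on $B_{r(x)}(x)$. Theorem \ref{th_eps_pHL}, applied to the rescaling $T_{x,r(x)}^f$, then gives $\norm{\nabla f}_{C^{0,\alpha}(B_{r(x)/2}(x))}\leq C\, r_0^{-(1+\alpha)}$ with the constant depending only on $m,N,p,\Lambda$. A finite covering of $B_{1/2}(0)$ by such half-balls (cardinality controlled by $r_0^{-m}$) yields the desired global bound.

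\textbf{Main obstacle.} The principal subtle point is the rigidity in Step 1 under \eqref{A}, which relies on the inductive dimension-reduction lemma combined with the $\epsilon$-regularity theorem and the hypothesis \eqref{A}. The un-conditioned case $m\leq\floor p$ is purely elementary, and the rest of the argument is a direct synthesis of Theorems \ref{th_qrig} and \ref{th_eps_pHL}.
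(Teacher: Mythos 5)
Your proof is correct and takes essentially the same route as the paper's: the paper likewise combines the pinching Theorem \ref{th_qrig} with a Lemma \ref{lemma_K}-type count of bad scales to produce a uniform scale $R(p,\Lambda,N)$ at which $f$ is $(0,\epsilon,R,x)$-symmetric, and then invokes Theorem \ref{th_eps_+} (resp.\ Corollary \ref{cor_eps_+}), whose symmetry hypothesis degenerates to almost $0$-symmetry precisely when $m\leq\floor p$ (resp.\ $m\leq a+1$), concluding with the monotonicity of $\norm{f}_{x,r}$ in $r$ in place of your explicit covering. Your Step 1 simply unpacks the rigidity already built into those cited results — with a nice self-contained polar-coordinate computation in the unconditioned case, where the paper instead relies on the classification from \cite{pHL} — so the two arguments coincide in substance.
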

\begin{proof}
 By Theorem \ref{th_eps_+}, there exist $\epsilon$ and $\alpha$ such that if $u$ is $(0,\epsilon,r,x)$ symmetric, then $\norm{u}_{x,r} \leq 1$.
By Theorem \ref{th_qrig}, we can rephrase this last property as follows: there exist $\delta,\chi>0$ such that 
\begin{gather*}
 \theta(x,r)-\theta(x,\chi r)\leq \delta \quad \Longrightarrow \quad  \norm{u}_{x,r}\leq 1\, .
\end{gather*}
Now we argue in a way similar to the proof of Lemma \ref{lemma_K}. Consider the sequence of scales $r_k = \chi^k$. By monotonicity, there exists a $K(p,\Lambda,N)<\infty$ such that $\theta(x,r_k)-\theta(x,r_{k+1})\leq \delta$ for some $0\leq k \leq K$. This implies that $\norm{u}_{x,r_K}\leq 1$, and thus we obtain the desired bounds.

Using Corollary \ref{cor_eps_+} instead of Theorem \ref{th_eps_+}, we prove the second statement.
\end{proof}

Naturally, the interesting case is when $m\geq \floor p +1$. As a corollary of the estimates obtained in the previous section, we can prove the main theorem.
\begin{theorem}
Let $u:B_2(0)\to N$ be a minimizing $p$-harmonic map with $\int_{B_2(0)}\abs{\nabla u}^p dV\leq \Lambda$. For every $\eta>0$, there exists a constant $C(m,\Lambda,N,p,\eta)$ such that
\begin{gather}\label{eq_rj}
  \Vol\ton{B_r(\cS(u))\cap B_1(0)}\leq \Vol\ton{B_r(\cB_r(u))\cap B_1(0)}\leq C r^{\floor{p}+1-\eta}\, .
 \end{gather}
Moreover, under the additional assumption \eqref{A}, we can improve the previous estimate to
\begin{gather*}
  \Vol\ton{B_r(\cS(u))\cap B_1(0)}\leq \Vol\ton{B_r(\cB_r(u))\cap B_1(0)}\leq C r^{a+2-\eta}\, .
 \end{gather*} 
\end{theorem}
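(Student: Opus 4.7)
The plan is to combine the two main ingredients already proved in this section: the scale-invariant $\epsilon$-regularity inclusion of Theorem \ref{th_eps_hom}, which identifies $\cB_r(f)$ with a quantitative stratum, and the Minkowski volume estimate for the quantitative strata of Theorem \ref{th_main_proof}. The singular-set bound $\cS(f)\subset \cB_r(f)$ is automatic, so the only substantive content is the estimate on $\Vol\ton{B_r(\cB_r(f))\cap B_1(0)}$.

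First I would rescale. The statement is given on $B_2(0)$ with normalization $\theta(0,2)\leq \Lambda$, while Theorems \ref{th_eps_hom} and \ref{th_main_proof} are formulated on $B_1(0)$ with an $L^p$-norm bound on $B_1(0)$. By a standard covering of $B_1(0)$ by a dimensional number of balls $B_{1/4}(x_i)$ with $x_i\in B_1(0)$, and rescaling each $B_1(x_i)\subset B_2(0)$ to the unit ball, the hypothesis $\theta(0,2)\leq \Lambda$ translates into a uniform energy bound on each rescaled map, and the required volume estimate on $B_1(0)$ follows by summing the estimates on each $B_{1/2}(x_i)$ produced by Theorem \ref{th_main_proof}. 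So it suffices to prove the estimate on $B_{1/2}(0)$ assuming $\int_{B_1(0)}\abs{\nabla f}^p\leq \Lambda$.

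Next I apply Theorem \ref{th_eps_hom} with the parameter $\epsilon=\epsilon(m,N,\Lambda,p)$ provided by it, which gives
\begin{gather*}
 \cS(f)\subset \cB_r(f)\subset \cS^{m-\floor p -1}_{\epsilon,r}(f)\, ,
\end{gather*}
and hence
\begin{gather*}
 B_r(\cB_r(f))\cap B_{1/2}(0)\subset B_r\ton{\cS^{m-\floor p -1}_{\epsilon,r}(f)}\cap B_{1/2}(0)\, .
\end{gather*}
Now I invoke Theorem \ref{th_main_proof} with the choices $k=m-\floor p-1$, the fixed parameter $\epsilon$ above in the role of $\eta$ in the strata definition, and the given $\eta>0$ in the role of the error exponent. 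This yields
\begin{gather*}
 \Vol\ton{B_r\ton{\cS^{m-\floor p-1}_{\epsilon,r}(f)}\cap B_{1/2}(0)}\leq C(m,N,\Lambda,p,\eta)\, r^{m-(m-\floor p-1)-\eta}=C\, r^{\floor p +1-\eta},
\end{gather*}
which is the first claim. Under assumption \eqref{A} I use the improved inclusion $\cB_r(f)\subset \cS^{m-a-2}_{\epsilon,r}(f)$ from the second part of Theorem \ref{th_eps_hom} and apply Theorem \ref{th_main_proof} with $k=m-a-2$, obtaining the exponent $a+2-\eta$ and thus the second claim.

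There is essentially no new obstacle: all the hard work has been absorbed into the $\epsilon$-regularity theorem (Theorem \ref{th_eps_+}, Corollary \ref{cor_eps_+}) and the quantitative stratification estimate (Theorem \ref{th_main_proof}). The only points requiring mild care are the bookkeeping of the two different small parameters (the $\epsilon$ fixed by $\epsilon$-regularity on the one hand, and the user-chosen error $\eta$ feeding into Theorem \ref{th_main_proof} on the other), and the rescaling from $B_2(0)$ to the unit scale at which those theorems are formulated; neither is a real difficulty.
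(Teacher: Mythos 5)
Your proposal is correct and is essentially the paper's own argument: the paper dispatches this theorem in one line as a corollary of Theorems \ref{th_eps_hom} and \ref{th_main_proof}, which is precisely your chain $\cS(f)\subset\cB_r(f)\subset\cS^{m-\floor{p}-1}_{\epsilon,r}(f)$ (resp. $\cS^{m-a-2}_{\epsilon,r}(f)$ under \eqref{A}) followed by the strata volume bound with $k=m-\floor{p}-1$ (resp. $k=m-a-2$). Your added bookkeeping --- the rescaling from $B_2(0)$ to the unit scale, and decoupling the fixed $\epsilon$-regularity parameter from the user-chosen exponent loss $\eta$ (which works since the strata shrink as the symmetry parameter grows, so one may run Theorem \ref{th_main_proof} with $\min(\epsilon,\eta)$ and use $r\leq 1$) --- merely makes explicit what the paper leaves implicit.
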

This theorem is a corollary of the estimates in Theorem \ref{th_eps_hom} and Theorem \ref{th_main_proof}. With this estimate, it is immediate to prove the following sharp integrability theorem.
\begin{corollary}
Let $u:B_2(0)\to N$ be a minimizing $p$-harmonic map with $\int_{B_2(0)}\abs{\nabla u}^p dV\leq \Lambda$. For all $\epsilon>0$, $\nabla u\in L^{\floor{p}+1-\epsilon}(B_1(0))$ with
 \begin{gather*}
  \int_{B_1(0)} \abs{\nabla u}^{\floor{p}+1-\epsilon}\leq C(m,\Lambda,N,p,\epsilon)\, .
 \end{gather*}
Moreover, under the additional assumption \eqref{A}, $\nabla u\in L^{a+2-\epsilon}(B_1(0))$ with
 \begin{gather*}
  \int_{B_1(0)} \abs{\nabla u}^{a+2-\epsilon}\leq C(m,\Lambda,N,p,\epsilon)\, .
 \end{gather*}
\end{corollary}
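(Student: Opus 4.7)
The plan is to deduce this corollary from the Minkowski-type volume bound on $\cB_r(f)$ provided by the previous theorem, via a standard layer-cake argument. The regularity scale $r_f$ of Definition \ref{deph_regscale} is the bridge between a volume estimate and an $L^q$ integrability statement for $\nabla f$.

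The starting point is the pointwise inequality
\begin{gather}
\abs{\nabla f(x)} \leq \frac{1}{r_f(x)} \qquad \text{for a.e. } x \in B_1(0),
\end{gather}
which is immediate from Definition \ref{deph_regscale}: for every $r < r_f(x)$ one has $\norm{f}_{x,r} \leq 1$, hence in particular $r\abs{\nabla f(x)} \leq \norm{f}_{x,r} \leq 1$; letting $r \uparrow r_f(x)$ yields the claim. Since $\cS(f)$ has zero Lebesgue measure by the Hausdorff dimension bound from \cite{pHL}, the inequality holds a.e.\ in $B_1(0)$. Consequently
\begin{gather}
\cur{x \in B_1(0) : \abs{\nabla f(x)} > \lambda} \subset \cB_{1/\lambda}(f) \cap B_1(0),
\end{gather}
and invoking the Minkowski estimate of the previous theorem with $r = 1/\lambda$ and an arbitrary $\eta > 0$ yields
\begin{gather}
\Vol\ton{\cur{x \in B_1(0) : \abs{\nabla f(x)} > \lambda}} \leq C(\eta)\, \lambda^{-(\floor p + 1 - \eta)}.
\end{gather}

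Given $\epsilon > 0$, I would fix $q = \floor p + 1 - \epsilon$ and apply Cavalieri's identity
\begin{gather}
\int_{B_1(0)} \abs{\nabla f}^q\, dV = q \int_0^{\infty} \lambda^{q-1}\, \Vol\ton{\cur{\abs{\nabla f} > \lambda} \cap B_1(0)}\, d\lambda.
\end{gather}
Splitting the outer integral at $\lambda = 1$: on $(0,1]$ the trivial bound $\Vol(\cdot) \leq \omega_m$ gives a finite contribution; on $(1, \infty)$ the distributional bound above with $\eta = \epsilon/2$ makes the integrand decay as $\lambda^{-1 - \epsilon/2}$, so the tail converges. This produces the desired $L^q$ bound with constant depending only on $m, N, \Lambda, p, \epsilon$.

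The second statement, under assumption \eqref{A}, follows by exactly the same argument using the improved exponent $a + 2 - \eta$ in place of $\floor p + 1 - \eta$. No step here is genuinely hard: all the substantive work lies in the preceding $\epsilon$-regularity theorem (which embeds $\cS(f) \subset \cB_r(f) \subset \cS^{m - \floor p - 1}_{\epsilon, r}$) and in the Minkowski volume estimate on the singular strata; this corollary is essentially a bookkeeping exercise converting volume decay of superlevel sets into $L^q$ integrability.
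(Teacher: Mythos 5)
Your proposal is correct and is precisely the argument the paper intends: the paper leaves the corollary as ``immediate'' from Theorem \ref{th_eps_hom} and the volume bound $\Vol\ton{B_r(\cB_r(f))\cap B_1(0)}\leq C r^{\floor{p}+1-\eta}$, and the standard route is exactly your chain --- the pointwise bound $\abs{\nabla f}\leq r_f^{-1}$ a.e.\ (or more directly, $\abs{\nabla f(x)}>\lambda$ forces $\norm{f}_{x,1/\lambda}>1$, i.e.\ $x\in\cB_{1/\lambda}(f)$), followed by the layer-cake formula with $\eta=\epsilon/2$ to absorb the loss in the exponent. Your write-up fills in the bookkeeping the authors omit, and the case \eqref{A} carries over verbatim with exponent $a+2-\eta$, as you say.
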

\begin{proof}
 The proof is an immediate corollary of the regularity scale estimates. Indeed, let $\eta=\epsilon/2>0$ and consider that for all $r>0$ we have by \eqref{eq_rj}
 \begin{gather*}
  \Vol\ton{\cur{x \ \ s.t. \ \ \abs{\nabla u(x)}>r^{-1}}}\leq C r^{\floor p +1-\epsilon/2} =C r^{\floor p +1-\epsilon} r^{\epsilon/2}\, .
 \end{gather*}
This immediately gives the desired integral estimates on $\abs{\nabla u}$.
\end{proof}

Note that the integrability is sharp. Indeed, consider the map $u:B_1(0)\subset \R^m\to S^{m-1}$ defined by $u(x)=x/\abs x$. This map is $p$-harmonic if $m>p$, but $\vert \nabla u \vert\not \in L^m(B_1(0))$. Thus we cannot improve the integrability to $\floor{p}+1$.

\subsection[Improved regularity for m equal to p+1]{Improved regularity for $m=\floor{p}+1$}\label{sec_isol_sing}
In this section, we focus on the special case $m=\floor p +1$ (or $m=a+2$ under the additional assumptions \eqref{A}). In this situation, it is known that singular points are isolated (see for example \cite{pHL}). We improve this result to an effective finiteness, which is new even in the case $p=2$. The next lemma describes the property that makes this case special.
\begin{lemma}
 Let $m=\floor p +1$, or let $m=a+2$ under the additional assumption \eqref{A}. There exists $\eta(p,\Lambda,N)$ such that if $u$ is $(0,\eta,r,x)$-symmetric and $(2r)^{2-m}\int_{B_{2r}(x)} \abs{\nabla u}^p\leq \Lambda $, then $u$ does not have singular points in the annulus $A_{r}(x) = B_{\frac{r}{2}}(x)\setminus B_{\frac{r}{4}}(x)$.
\end{lemma}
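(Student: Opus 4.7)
My plan is to argue by contradiction via a blow-up/compactness scheme. Suppose the lemma fails; then I can find $p$-minimizers $f_i$, base points $x_i$, scales $r_i$, and singular points $z_i \in A_{r_i}(x_i)$ with $f_i$ being $(0,1/i,r_i,x_i)$-symmetric and $\theta_{f_i}(x_i,2r_i) \leq \Lambda$. Rescaling via $g_i := T^{f_i}_{x_i,r_i}$, I may assume $x_i = 0$, $r_i = 1$, with $z_i$ a singular point of $g_i$ in the compact annulus $\overline{A_1(0)} = \overline{B_{1/2}(0)\setminus B_{1/4}(0)}$, with $g_i$ being $(0,1/i,1,0)$-symmetric and $\int_{B_2(0)}\abs{\nabla g_i}^p$ uniformly bounded.

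Next I would pass to a limit. By strong $W^{1,p}$ compactness for $p$-minimizers (the same ingredient invoked in Theorem \ref{th_qrig} and Proposition \ref{prop_conespl}), a subsequence of $g_i$ converges strongly in $W^{1,p}_{loc}$ to a $p$-minimizer $g$. The $(0,1/i,1,0)$-symmetry produces homogeneous maps $h_i$ with $\fint_{B_1(0)}d(g_i,h_i)^p \to 0$; combined with the strong $L^p$ convergence $g_i\to g$ and the closedness of homogeneous maps (Remark \ref{rem_khom}), this forces $g$ to be $0$-symmetric on $B_1(0)$. Extending $g$ to all of $\R^m$ by homogeneity yields a globally defined homogeneous $p$-minimizer, since minimality on every ball follows from scale invariance of the $p$-energy. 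After a further subsequence $z_i \to z_\infty \in \overline{A_1(0)}$, which sits at distance at least $1/4$ from the origin.

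The crucial step exploits the critical dimension $m=\floor{p}+1$ (or $m = a+2$ under \eqref{A}). By \cite{pHL} (respectively the lemma after Corollary \ref{cor_eps_+}), $\dim_{\mathcal{H}}(\cS(g)) \leq 0$. Since $\cS(g)$ is a cone with vertex at the origin by homogeneity of $g$, and the only cone in $\R^m$ of Hausdorff dimension at most $0$ is $\{0\}$ itself, I conclude that $g$ is $C^{1,\alpha}$ on an open neighbourhood of $\overline{A_1(0)}$, and in particular continuous there.

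To close the contradiction I would apply the $\epsilon$-regularity theorem to $g_i$ near $z_\infty$. Continuity of $g$ on $\overline{A_1(0)}$ lets me pick a ball $B_\rho(z_\infty)$ on which $g$ is within $L^p$-distance $\delta/2$ from some fixed $w\in N$, where $\delta$ is the constant from Theorem \ref{th_eps_pHL} corresponding to the uniform energy. Strong $L^p$ convergence then transfers the same bound to $g_i$ on $B_\rho(z_\infty)$ for $i$ large, so Theorem \ref{th_eps_pHL} yields $g_i \in C^{1,\alpha}(B_{\rho/2}(z_\infty))$; for $i$ sufficiently large, $z_i \in B_{\rho/2}(z_\infty)$, contradicting $z_i \in \cS(g_i)$. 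The main obstacle I anticipate is verifying that $\cS(g)$ really reduces to $\{0\}$ (rather than merely having small Hausdorff dimension) and that the limit is a minimizer on all of $\R^m$; both follow cleanly from the homogeneity/cone structure combined with the already-established dimension bound, but they are the genuinely nontrivial ingredients behind the argument.
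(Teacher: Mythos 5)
Your argument is correct and takes essentially the same route as the paper's own proof: rescale, use strong $W^{1,p}$ compactness of minimizers to extract a limit $g$ that is a homogeneous $p$-minimizer, and exploit the critical dimension $m=\floor{p}+1$ (resp.\ $m=a+2$ under \eqref{A}) to rule out singular points of $g$ away from the origin, contradicting $z_i\to z_\infty$ with $\abs{z_\infty}\geq 1/4$. The only difference is that you make explicit two steps the paper compresses into one line --- the cone/dimension-zero argument showing $\cS(g)\subset\cur{0}$, and the $\epsilon$-regularity transfer showing that a limit of singular points must be singular for the limit map --- which are precisely the implicit details behind the paper's assertion that $x$ is a singular point of the homogeneous limit.
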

\begin{proof}
 We will only deal with the case $m=\floor p +1$, the other being completely analogous. Moreover, by scale and translation invariance, we can assume that $x=0$ and $r=1$.
 
 Consider by contradiction a sequence of minimizers $u_i$ which are $(0,\eta,1,0)$-symmetric and such that $\int_{B_2 (0)} \abs{\nabla u}^2\leq 2^{n-2}\Lambda$, and let $x_i$ be a singular point of $u_i$ inside the annulus $B_{1}(0)\setminus B_{1/2}(0)$. By passing to a subsequence, we can assume that $u_i\to u$ in the strong $W^{1,p}$ sense and that $x_i\to x$, where $x$ is a singular point for $u$. Since $u$ is a homogeneous minimizing map, and since $m=\floor p +1$, it cannot have a singular point away from the origin.
%
%
\end{proof}


As an immediate corollary, we can prove that all points in $\cS(u)$ are isolated.
\begin{lemma}
Under the hypothesis of the previous lemma, the singular points of $u$ are locally finite.
\end{lemma}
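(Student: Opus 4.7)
The plan is to exploit the monotonicity of the normalized energy $\theta_f(x,\cdot)$ together with the quantitative rigidity Theorem \ref{th_qrig} and the previous lemma to show that every point of $\cS(f)$ is isolated; since $\cS(f)$ is closed in $B_1(0)$, this immediately yields local finiteness.

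Fix $x \in \cS(f)$. Since $r \mapsto \theta_f(x,r)$ is monotone non-decreasing (modulo the harmless exponential factor on Riemannian domains) and bounded above by a multiple of $\Lambda$ on $(0,1/2]$, the density $\theta_0 \equiv \lim_{r \to 0^+}\theta_f(x,r)$ exists and is finite. Let $\eta = \eta(p,\Lambda,N)$ be the constant produced by the previous lemma, and apply Theorem \ref{th_qrig} with this $\eta$ to obtain $\delta$ and $\chi \in (0,1/2]$ such that $\theta_f(x,r)-\theta_f(x,\chi r) \leq \delta$ forces $f$ to be $(0,\eta,r,x)$-symmetric. Using the existence of the limit $\theta_0$, pick $r_1 = r_1(x) > 0$ such that $\theta_f(x,r) - \theta_0 < \delta$ for all $r \leq r_1$; then by monotonicity
\begin{gather*}
 \theta_f(x,r) - \theta_f(x,\chi r) \leq \theta_f(x,r) - \theta_0 < \delta
\end{gather*}
for every $r \leq r_1$, so $f$ is $(0,\eta,r,x)$-symmetric at every scale $r \in (0,r_1]$.

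For each such scale, the hypothesis on the normalized energy in the previous lemma is satisfied automatically (by the uniform energy bound and monotonicity), so the lemma gives that $f$ has no singular points in the annulus $A_r(x) = B_{r/2}(x) \setminus B_{r/4}(x)$. As $r$ ranges over $(0,r_1)$, the annuli $\{r/4 < |y-x| < r/2\}$ sweep out the entire punctured ball $B_{r_1/2}(x) \setminus \{x\}$, and therefore $x$ is an isolated point of $\cS(f)$.

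Finally, $\cS(f)$ is relatively closed in $B_1(0)$: at any continuity point $y \notin \cS(f)$ the function $f$ is close in $L^p$ to the constant $f(y)$ on a sufficiently small ball $B_\rho(y)$, so Theorem \ref{th_eps_pHL} promotes this to $C^{1,\alpha}$ regularity on $B_{\rho/2}(y)$, showing that the continuity set is open. A closed subset of $B_1(0)$ consisting entirely of isolated points meets every compact subset in a finite set, which is the claimed local finiteness. The only mildly subtle step is ensuring the $(0,\eta,r,x)$-symmetry at \emph{every} scale $r \leq r_1$ rather than along a geometric subsequence; this is handled once and for all by the convergence $\theta_f(x,r) \to \theta_0$, after which the covering of the punctured ball by annuli is an elementary observation.
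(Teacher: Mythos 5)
Your proof is correct and follows essentially the same route as the paper: the paper also uses monotonicity to find, for each $x$, a scale $r_x$ with $\theta(x,r_x)-\theta(x,0)\leq\delta$, then invokes the pinching-to-symmetry result (Theorem \ref{th_qrig}) and the previous lemma at all scales $r\leq r_x$ so that the annuli sweep out a punctured ball, making each singular point isolated. You merely spell out the steps the paper leaves implicit (the existence of the density limit, the annulus covering, and the closedness of $\cS(f)$ via the $\epsilon$-regularity theorem), all correctly.
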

\begin{proof}
 Given the monotonicity of $\theta$, for each $x$ there exists an $r_x$ such that
 \begin{gather*}
  \theta(x,r_x)-\theta(x,0)\leq \delta\, .
 \end{gather*}
Then by applying the previous lemma to all $r\leq r_x$, we obtain that $u$ is continuous on $B_{r_x}(x)\setminus\cur{x}$. This proves that the $\cS(u)$ is an isolated close set, thus locally finite.
\end{proof}
By refining this lemma, we prove a uniform upper bound on the number of singular points.
\begin{theorem}
Suppose that $m=\floor{p}+1$, or that $m=a+2$ under the additional assumption \eqref{A}. Let $u$ be a $p$-minimizing map $u:B_1(0)\to N$, where 
 \begin{gather*}
  \int_{B_1(0)}\abs{\nabla u}^p dV \leq \Lambda\, .
 \end{gather*}
Then
 \begin{gather*}
  \# \cS(u)\cap B_{1}(0)\leq C(p,\Lambda,N)\, .
 \end{gather*}
\end{theorem}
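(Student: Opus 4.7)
The strategy is to upgrade the previous lemma (local finiteness of $\cS(f)$) to an effective, uniform bound by combining it with the quantitative rigidity theorem and a carefully controlled recursive covering.

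Let $\eta=\eta(p,\Lambda,N)$ be the constant from the previous lemma (enlarged slightly, via the same compactness argument, so that the singular-free annulus becomes $B_{r/2}(x)\setminus B_{\chi r/2}(x)$ rather than $B_{r/2}(x)\setminus B_{r/4}(x)$), and let $\chi,\delta>0$ be the constants provided by Theorem \ref{th_qrig} with $\epsilon=\eta$. By monotonicity of $\theta_f(x,\cdot)$ and the bound $\theta_f(x,1)\leq C\Lambda$, for every $x\in B_{1/2}(0)$ the set of ``bad'' scales $B(x):=\cur{k\in\N\colon \theta_f(x,\chi^k)-\theta_f(x,\chi^{k+1})>\delta}$ has cardinality at most $K:=\lceil C\Lambda/\delta\rceil$. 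At every $k\notin B(x)$ the map $f$ is $(0,\eta,\chi^k,x)$-symmetric, so the previous lemma yields $\cS(f)\cap\ton{B_{\chi^k/2}(x)\setminus B_{\chi^{k+1}/2}(x)}=\emptyset$; since these good annuli cover $B_{1/2}(x)\setminus\cur{x}$ as $k$ varies over $\N$, for every singular $x_0\in\cS(f)\cap B_{1/2}(0)$ the remaining singular points of $f$ inside $B_{1/2}(x_0)$ lie in at most $K$ thin annuli around $x_0$.

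I would then build a covering tree: the root is $B_{1/2}(0)$ with pivot $x_0$, and for each $k\in B(x_0)$ the annulus $A_k(x_0)=B_{\chi^k/2}(x_0)\setminus B_{\chi^{k+1}/2}(x_0)$ is covered by a dimensional number $c(m,\chi)$ of balls of radius $\chi^{k+1}/4$; inside each cover ball containing a singular point the process repeats with a new pivot. The branching factor per node is then at most $cK$, and leaves correspond to balls containing no singular points other than their pivot, contributing one singular point each.

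The main technical obstacle is to bound the depth of this tree by $O(K)$, which yields the final count $\#\cS(f)\cap B_{1/2}(0)\leq (cK)^{O(K)}=C(p,\Lambda,N)$. The idea is that each descent from a pivot $y$ along one of its bad scales consumes $\delta$ of energy at $y$, and when the pivot switches to $y'$ inside a bad annulus of $y$, the elementary comparison
\[
\theta_f(y',s)\leq C(m,p)\,\theta_f\ton{y,\,s+\abs{y-y'}}\, ,
\]
which follows from $B_s(y')\subset B_{s+\abs{y-y'}}(y)$ and the definition of $\theta_f$, allows the energy accounting to be transferred: bad scales of $y'$ at scales comparable to or smaller than $\abs{y-y'}$ reflect drops in $\theta_f(y,\cdot)$ at nearby scales. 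Combining these drops along a root-to-leaf path of length $d$ produces a cumulative drop of order $\delta d$ in a single suitable single-point energy, and since the total available drop is at most $C\Lambda$ this forces $d\leq O(K)$. Under assumption \eqref{A} the same argument applies with $\lfloor p\rfloor+1$ replaced by $a+2$ and using Corollary \ref{cor_eps_+} in place of Theorem \ref{th_eps_+}.
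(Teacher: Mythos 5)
Your setup (bad scales via Theorem \ref{th_qrig}, the annulus lemma, and a covering of the bad annuli) matches the paper's ingredients, but the step you yourself flag as the main obstacle --- bounding the depth of the tree by $O(K)$ --- is a genuine gap, and the energy-transfer mechanism you propose cannot close it. The inequality $\theta_f(y',s)\leq C\,\theta_f\ton{y,s+\abs{y-y'}}$ transfers \emph{upper bounds}, not \emph{drops}: from $\theta_f(y',r)-\theta_f(y',\chi r)>\delta$ you can only conclude $\theta_f\ton{y,r+\abs{y-y'}}\geq C^{-1}\delta$, a lower bound on the energy at $y$, not a pinching of $\theta_f(y,\cdot)$. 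To convert a drop at $y'$ into a drop at $y$ you would need two-sided comparisons at both scales, and these are uniform only when both scales are much larger than $\abs{y-y'}$; in your tree the relevant bad scales of the new pivot are ``comparable to or smaller than $\abs{y-y'}$,'' exactly the regime where the ratio $\ton{(s+\abs{y-y'})/s}^{m-p}$ blows up (note $m-p>0$ here). Moreover, even where a single transfer works, it costs a multiplicative constant $C>1$, so $d$ successive transfers degrade the drops geometrically, to order $\delta/C^{j}$ after $j$ changes of pivot; the cumulative drop along a path of length $d$ is then bounded by a convergent geometric series rather than being of order $\delta d$, and no contradiction with $\theta\leq C\Lambda$ is produced. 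So the claimed depth bound $d\leq O(K)$ does not follow from the argument as written.

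The paper avoids the transfer problem altogether by never changing the point at which the energy is pinched. It follows a \emph{single} chain of nested balls $B_{2^{-i}}(x_i)$, always choosing the cover ball containing the largest number of singular points, and records $T_i=1$ whenever that ball fails to capture all remaining singular points; each such event loses at most a factor $c(m)$ in the count, whence $S_0\leq c(m)^{\abs T}$. Crucially, when $T_i=1$ the construction produces a singular point $z_i$ with $2^{-i-1}\leq d(z_i,x_{\bar i})\leq 2^{-i+2}$, where $x_{\bar i}$ is the \emph{final} pivot, so every split event registers as a bad scale of the one fixed point $x_{\bar i}$, and the single-point bad-scale bound (the Lemma \ref{lemma_K} argument) gives $\abs T\leq 3C(p,\Lambda,N)$ directly. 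Your tree can be repaired by the same anchoring device: shrink the cover radii (say to $\chi^{k+1}/8$) so that along any root-to-leaf path the total drift after a split is small compared with the inner radius of the annulus where it occurred; then each intermediate pivot $y_j$ sits at distance comparable to $\chi^{k_j}$ from the leaf's pivot, so the splitting scales along the path are (up to bounded overlap) bad scales of that single leaf point, bounding the number of \emph{splitting} nodes on any path by $O(K)$ --- which is all the counting needs, since split-free stretches of the descent cost nothing.
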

\begin{proof}
Consider the sequence of scales $r_k=2^{-k}$. By an argument similar to the one in Lemma \ref{lemma_K}, for each fixed $x$ there exists at most $C(p,\Lambda,N)$ ``bad scales'', i.e., scales for which $u$ is not continuous on $A_{r_k}(x)$.

For any fixed $u$, the number of singular points in $B_{1}(0)$ is finite by the previous lemma. Let $S_0$ be this number, we will  prove by induction a uniform upper bound on $S_0$.

\paragraph{Induction step}
Define $T_i$ to be an infinite vector of zeros and ones, and let $\abs T = \sum_{i=1}^\infty T(i)$.

For $i=1$, consider all the balls of radius $2^{-1}$ centered at $x\in \cS(u)\cap B_1(0)$, and refine this covering of $\cS(u)$ by considering only a maximal subcovering such that $B_{2^{-2}}(x_j)$ are disjoint. 
By simple volume estimates the number of balls in this covering is at most $c(m)$. 

Consider a ball in this covering that contains the largest number of singular points, say $B_{2^{-1}}(x_1)$, containing $S_1$ singular points. If $S_1 = S_0\equiv \#\cS(u)$, equivalently if $B_{2^{-1}}(x_1)$ contains all the singular points, then set $T_1=0$, otherwise set $T_1=1$. In this second case,
\begin{gather*}
S_0> S_1 \geq c(m)^{-1}S_0\, .
\end{gather*}
Moreover there exists $y_1\in \cS(u)\cap B_1(0)\setminus B_{2^{-1}}(x_1)$. Thus for each $z\in B_{2^{-1}}(x_1)$, either $x_1$ or $y_1$ are in $B_2(z)\setminus B_{\frac{1}{4}}(z)$.

We repeat the process by covering $B_{2^{-i}}(x_i)\cap \cS(u)$ with balls of radius $2^{-i-1}$. Since singular points are finite, in a finite number $\bar i$ of steps we obtain $S_{\bar i} =1$, hence we stop. Evidently we have the estimate:
\begin{gather*}
 S_0 \leq c(m)^{\abs T}
\end{gather*}

In order to get a bound on $\abs T$, consider the singular point $x_{\bar i}$. If $T(i)=1$, then by construction there exists a singular point $z_i$ such that
\begin{gather*}
 2^{-i-1}\leq d(z_i,x_{\bar i})\leq 2^{-i+2}\, .
\end{gather*}
The bound on the number of bad scales ensures that $\abs{T}\leq 3C(p,\Lambda,N)$. This concludes the proof.
\end{proof}

\section{Stationary maps}
The study of the regularity of $p$-stationary harmonic maps is a little more complicated than in the minimizing case. There are two important differences: first of all, a sequence of $p$-stationary maps which is $W^{1,p}$ weakly convergent may not converge strongly (as opposed to the minimizing case). For this reason, we generalize the study of the defect measure carried out in \cite{lin_stat}.

It is worth mentioning also the work \cite{lin_p}, where the author studies the regularity of a class of $p$-minimizing functions. Some of the results available in this article are similar to the results we get here, for example the fact that the defect measure can be nonzero only if $p$ is an integer.

Moreover, in the stationary case a full-blown $\epsilon$-regularity theorem like \ref{th_eps_pHL} has not been proved yet, 
%
%
%
even though it seems very plausible to be valid. Note that, only for $p=2$, this problem has been completely solved by Bethuel in \cite{beth} (see also \cite{RS}), however the gauge techniques used in these papers are not easily adapted for generic $p$.

\subsection[e-regularity theorem]{$\epsilon$-regularity theorem}
Some partial results are available in literature under stricter assumptions. For example, see \cite{p-fuchs}, \cite{take_eps}, \cite{strze_eps}, \cite{mouya}, \cite{strze_eps_revenge}, \cite{Rtoc}. In \cite{take_eps} an $\epsilon$-regularity theorem is proved assuming that the target the standard sphere, and in \cite{Rtoc} under the strong assumption that the map is $W^{2,p}$. To the best of our knowledge, the most general result in this direction is the following, which assumes homogeneity of the target space.

\begin{theorem}\cite[Corollary 3.2]{torowang}\label{th_eps_stat}
 Let $N$ be a smooth compact homogeneous space with a left invariant metric. Then there exists $\epsilon(m,N,p),\alpha(m,N,p)>0$ such that if $u$ is a $p$-stationary harmonic map with $\theta(x,2r)<\epsilon$, then $u\in C^{1,\alpha} (B_x(r))$.
\end{theorem}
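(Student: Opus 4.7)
The plan is to follow the strategy of Toro--Wang, exploiting the homogeneity of $N$ to produce conservation laws that compensate for the absence of strong $W^{1,p}$-compactness in the stationary case. By scaling invariance of $\theta$, it is harmless to normalize $x=0$ and $r=1/2$, so that the hypothesis becomes $\int_{B_1(0)}|\nabla u|^p\le\epsilon$, and we aim to prove $u\in C^{1,\alpha}(B_{1/2}(0))$ with a uniform bound.

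First, I would use the structure of $N$. Since $N$ is a compact homogeneous space with a left-invariant metric, there exists a finite family of Killing vector fields $X_1,\dots,X_K$ on $N$ whose values $X_\alpha(q)$ span $T_qN$ at every point $q\in N$. Testing the weak $p$-harmonic equation against $X_\alpha\circ u$, one sees that the second fundamental form term $|\nabla u|^{p-2}II(u)(\nabla u,\nabla u)$ is annihilated (it is normal to $N$, while $X_\alpha\circ u$ is tangent), and Killing symmetry causes the residual quadratic term to vanish as well, leaving the conservation laws
\begin{gather}
\dive{|\nabla u|^{p-2}\langle X_\alpha(u),\nabla u\rangle}=0,\qquad \alpha=1,\dots,K.
\end{gather}
This is the key compensation which replaces the strong convergence available for minimizers.

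Second, I would upgrade the smallness hypothesis to a Morrey decay. Using the conservation laws together with a Hodge/Coulomb-type decomposition of the divergence-free vector fields $|\nabla u|^{p-2}\langle X_\alpha(u),\nabla u\rangle$ on $B_1(0)$, and combining this with the monotonicity formula from Proposition \ref{prop_mono_stat}, one establishes that for $\epsilon$ sufficiently small there exist $\delta>0$ and $C>0$ such that
\begin{gather}
\rho^{p-m}\int_{B_\rho(y)}|\nabla u|^p\,dV\le C\rho^{\delta}\qquad\forall y\in B_{1/2}(0),\ \rho\le 1/4.
\end{gather}
The argument is a Morrey iteration: at each scale the conservation laws let one solve an auxiliary elliptic system whose solution serves as a comparison map, and comparing $u$ with it yields geometric energy decay once $\epsilon$ is below the threshold.

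Finally, once such Morrey decay is in hand, standard Morrey--Campanato theory gives Hölder continuity of $u$, and then the interior $C^{1,\alpha}$ regularity theory for the degenerate $p$-Laplace system (Uhlenbeck/Tolksdorf/DiBenedetto type estimates, applied once $u$ is continuous and takes values in a small coordinate patch of $N$) upgrades this to $u\in C^{1,\alpha}(B_{1/2}(0))$ with a uniform bound depending only on $m,N,p$. The main obstacle throughout is the second step: the lack of a pointwise algebraic identity forcing the decay (as in the $p=2$ case one would use a Hodge decomposition directly), and one has to handle the degeneracy of $|\nabla u|^{p-2}$ carefully to make the comparison argument quantitative. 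The homogeneity of $N$ is precisely what supplies enough conservation laws to overcome this obstacle.
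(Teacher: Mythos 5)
The paper offers no proof of this statement at all---it is imported verbatim as \cite[Corollary 3.2]{torowang}---and your outline (conservation laws $\dive{|\nabla u|^{p-2}\langle X_\alpha(u),\nabla u\rangle}=0$ coming from a spanning family of Killing fields on the homogeneous target, small-energy decay of the scaled $p$-energy via the monotonicity formula, then $C^{1,\alpha}$ from the Uhlenbeck-type regularity theory for the degenerate system) is essentially a pr\'ecis of the Toro--Wang argument itself, so your approach coincides with that of the cited source. The one caveat is that your second step, where the real analytic weight lies, is compressed: Toro--Wang drive the Morrey decay through Hardy-space/BMO duality applied to the div-curl quantities produced by the conservation laws (the $p\neq 2$ degeneracy prevents a direct Hodge-decomposition comparison), so as written that step is a plausible sketch rather than a complete argument.
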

As an immediate corollary, we can obtain also estimates on the (scale-invariant) $C^{1,\alpha}$ norm of $u$.
\begin{corollary}\label{cor_eps_stat}
 Let $N$ be as above, and let $u:B_2(0)\subset \R^m\to N$ be a $p$-stationary harmonic map. There exist positive constants $\epsilon(m,N,p)$ and $\alpha(m,N,p)$ such that if $\theta(x,2r)<\epsilon$, then
 \begin{gather*}
  \norm{u}_{x,r} = r\norm{\nabla u}_{C^0(B_r(0))} + r^{1+\alpha} \sup_{x,y\in B_r(x)}\cur{\frac{\abs{\nabla u(x)-\nabla u(y)}}{\abs{x-y}^{\alpha}}   }  \leq 1\, .
 \end{gather*}
\end{corollary}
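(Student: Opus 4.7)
The plan is to reduce to a fixed scale via scale invariance and then argue by compactness from Theorem \ref{th_eps_stat}. A direct change of variables with the blow-up $\tilde u = T^u_{x,r}$ yields $\theta_{\tilde u}(0, 2) = \theta_u(x, 2r)$ and $\norm{\tilde u}_{0, 1} = \norm{u}_{x, r}$, so it suffices to produce $\epsilon(m,N,p) > 0$ such that every stationary $p$-harmonic map $u : B_2(0) \to N$ with $\theta_u(0, 2) < \epsilon$ satisfies $\norm{u}_{0, 1} \leq 1$, with $\alpha$ the exponent furnished by Theorem \ref{th_eps_stat}.

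Suppose by contradiction that the reduced statement fails. Extract a sequence of stationary $p$-harmonic maps $u_i : B_2(0) \to N$ with $\theta_{u_i}(0, 2) \to 0$ but $\norm{u_i}_{0, 1} > 1$. By Proposition \ref{prop_mono_stat} and a trivial comparison of balls, for every $y \in B_1(0)$ one has $\theta_{u_i}(y, 1/2) \leq C_m\, \theta_{u_i}(0, 2) \to 0$. Hence for $i$ large enough Theorem \ref{th_eps_stat} applies with base point $y$ and radius $1/4$ uniformly across $y \in B_1(0)$, yielding $u_i \in C^{1,\alpha}(B_1(0))$ with uniform bounds. Passing to a subsequence, $u_i \to u_\infty$ in $C^{1,\alpha'}_{\mathrm{loc}}(B_1(0))$ for every $\alpha' < \alpha$, and since $\int_{B_1(0)} \abs{\nabla u_i}^p \to 0$ the limit $u_\infty$ must be constant. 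Therefore $\nabla u_i \to 0$ uniformly on $B_1(0)$, contradicting $\norm{u_i}_{0, 1} > 1$.

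The only delicate step is extracting a \emph{quantitative} $C^{1, \alpha}$ bound from Theorem \ref{th_eps_stat}, which is stated in \cite{torowang} merely as a qualitative regularity assertion. This is in fact the standard output of the elliptic bootstrap performed in the proof of that theorem; alternatively, one can bypass the issue by inserting one more layer of the same compactness argument above, since what is really needed is not a uniform $C^{1,\alpha}$ bound on the $u_i$ but only their uniform $C^{1,\alpha'}$ precompactness, which follows from Theorem \ref{th_eps_stat} applied on a small fixed scale together with Arzelà--Ascoli.
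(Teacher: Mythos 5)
Your proposal is correct and is essentially the paper's own proof: the paper disposes of this corollary with the single sentence that it is ``obtained through a simple contradiction and compactness argument,'' which is exactly the scaling-plus-contradiction scheme you carry out in detail. One caveat: your suggested bypass of the quantitative issue --- inserting ``one more layer'' of compactness via Arzel\`a--Ascoli --- does not actually work, since Arzel\`a--Ascoli requires equicontinuity of the gradients, i.e.\ a uniform $C^{1,\alpha}$ bound, which is precisely what a merely qualitative reading of Theorem \ref{th_eps_stat} withholds; the correct resolution is the first one you name, namely that the bootstrap in \cite{torowang} produces the $C^{1,\alpha}$ estimate with constants depending only on $(m,N,p)$ once $\theta(x,2r)<\epsilon$, so the uniform bounds needed for your compactness step are legitimately available.
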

\begin{proof}
 The proof is obtained through a simple contradiction and compactness argument.
\end{proof}

Using a simple covering argument (see \cite[section 2.4.3]{EG}), we can obtain the following regularity theorem.
\begin{theorem}\cite[Theorem 2]{torowang}\label{th_statreg}
 Let $u$ be a stationary $p$-harmonic map $u:B_2(0)\to N$, where $N$ is a compact homogeneous manifold with a left invariant metric. Then for some $\alpha(n,p,N)>0$, $u\in C^{1,\alpha}(B_1(0)\setminus Z)$, where $Z$ is a closed set with $\cH^{m-p}(Z)=0$. In particular, if $p\geq n$, then $u$ is a $C^{1,\alpha}$ function on the whole domain.
\end{theorem}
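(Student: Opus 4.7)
Proof plan. The strategy is a classical $\epsilon$-regularity + Federer-type covering argument, with Corollary \ref{cor_eps_stat} and the (almost) monotonicity of $\theta_u$ from Proposition \ref{prop_mono_stat} as the two essential ingredients.

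I would begin by defining the candidate singular set intrinsically. Monotonicity guarantees that the density $\Theta_u(x) := \lim_{r \to 0^+} \theta_u(x, r)$ exists as a decreasing limit, and with $\epsilon = \epsilon(m, N, p)$ the constant of Corollary \ref{cor_eps_stat} I set
\begin{equation*}
Z := \{\,x \in B_1(0) \ :\ \Theta_u(x) \geq \epsilon\,\}.
\end{equation*}
If $\Theta_u(x_0) < \epsilon$, monotonicity yields an $r > 0$ with $\theta_u(x_0, 2r) < \epsilon$, and the continuity of the map $y \mapsto \theta_u(y, 2r)$ propagates the inequality to an open neighborhood $U$ of $x_0$. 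Applying Corollary \ref{cor_eps_stat} at every $y \in U$ shows that $u \in C^{1,\alpha}(B_1(0)\setminus Z)$ and, equivalently, that $Z$ is closed (being the superlevel set of the upper semicontinuous function $\Theta_u$, which is the pointwise infimum of the continuous functions $\theta_u(\cdot, r)$).

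Next, the Hausdorff bound on $Z$, for which I would follow the scheme in \cite[section 2.4.3]{EG}. Fix $\delta > 0$ and cover $Z$ by balls $B_{r_i}(x_i)$ with $x_i \in Z$ and $0 < r_i < \delta$; by a Vitali $5r$-selection extract a subcollection whose fifth-scaled balls $B_{r_i/5}(x_i)$ are pairwise disjoint while the original balls $B_{r_i}(x_i)$ still cover $Z$. The definition of $Z$ and monotonicity give
\begin{equation*}
\int_{B_{r_i/5}(x_i)} |\nabla u|^p = (r_i/5)^{m-p}\, \theta_u(x_i, r_i/5) \geq \epsilon\,(r_i/5)^{m-p},
\end{equation*}
and summing over the disjoint collection produces
\begin{equation*}
\sum_i r_i^{m-p} \leq C(m)\,\epsilon^{-1} \int_{V_\delta} |\nabla u|^p,
\end{equation*}
where $V_\delta$ denotes the $\delta$-neighborhood of $Z$ inside $B_2(0)$. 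Hence the $\delta$-approximating Hausdorff premeasure satisfies $\mathcal{H}^{m-p}_{2\delta}(Z) \leq C \epsilon^{-1}\int_{V_\delta} |\nabla u|^p < \infty$.

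To close the argument I would upgrade this finite bound to $\mathcal{H}^{m-p}(Z) = 0$. Since $\mathcal{H}^{m-p}(Z)$ is finite and $m-p < m$, the set $Z$ has $m$-dimensional Lebesgue measure zero. Consequently $\int_{V_\delta} |\nabla u|^p \to 0$ as $\delta \to 0$ by absolute continuity of the Lebesgue integral, and the estimate above forces $\mathcal{H}^{m-p}(Z) = 0$. Finally, when $p \geq m$ the bound $\theta_u(x,r) \leq \Lambda \, r^{p-m}$ (and, in the borderline case $p = m$, the absolute continuity of $\int_{B_r(x)}|\nabla u|^p$) gives $\Theta_u \equiv 0$, so $Z = \emptyset$ and a finite cover by Corollary \ref{cor_eps_stat} upgrades the conclusion to $u \in C^{1,\alpha}(B_1(0))$. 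The most delicate point is verifying that $Z$ is closed via upper semicontinuity of $\Theta_u$ and reconciling the almost-monotonicity (rather than exact monotonicity) at scale $r_i/5$ with the chosen threshold $\epsilon$; both follow from the explicit monotonicity formula \eqref{eq_p-der}.
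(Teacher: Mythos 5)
Your proposal is correct and follows essentially the same route the paper intends: the paper gives no independent proof but cites \cite[Theorem 2]{torowang} together with the covering argument of \cite[section 2.4.3]{EG}, which is precisely your scheme of defining $Z$ as the superlevel set of the density $\Theta_u$, applying the $\epsilon$-regularity result (Theorem \ref{th_eps_stat}, Corollary \ref{cor_eps_stat}) off $Z$, and using a Vitali covering plus absolute continuity of the integral to upgrade $\cH^{m-p}_{2\delta}(Z)<\infty$ to $\cH^{m-p}(Z)=0$. Your closing worry about almost-monotonicity is moot here, since the domain is a Euclidean ball and Proposition \ref{prop_mono_stat} gives exact monotonicity of $\theta_u$.
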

\begin{remark}
Note that this result is not quantitative, meaning that there is no upper bound on $\abs{\nabla u}$ of any kind.

Indeed, even if $u\in C^{1,\alpha}(B\setminus Z)$, there is no uniform local bound on $\abs{\nabla u}$ on $B\setminus Z$. A counterexample can be found in \cite[Example 1.1]{lin_stat}. Let $u$ be a a nonconstant stationary $m$-harmonic map from $\R^m$, which has no singular points. Since such maps are conformal invariant in $\R^m$, it is easy to build a sequence $u_i$ with $m$-energy independent of $i$ such that $u_i\wto const$ in $W^{1,m}$ but
\begin{gather*}
 \abs{\nabla u}^m dV \wto c \delta_0\, ,
\end{gather*}
where the convergence is weak in the sense of measure. Evidently, in such a situation there can be no uniform upper bound on $\abs{\nabla u_i}$.
\end{remark}

However, one can easily tweak the previous argument to get effective $C^{1,\alpha}$ away from a set of Minkowski dimension $m-p$.
\begin{theorem}\label{th_p_mink}
 Let $u:B_2(0)\to N$ be a stationary $p$-harmonic map, where $N$ is a compact homogeneous manifold with a left invariant metric. Then
 \begin{gather*}
  \Vol\ton{\cB_r(u)} \leq C(m,N,p)r^{p} \int_{B_2(0)} \abs{\nabla u}^p \, .
 \end{gather*}
\end{theorem}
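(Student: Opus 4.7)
}

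The idea is that Corollary \ref{cor_eps_stat} reduces the volume bound on $\cB_r(u)$ to a volume bound on a sublevel set of the normalized energy, which can then be controlled by a standard Vitali-type covering argument. For $r \geq 1/2$ the estimate is trivial (the right-hand side is already bigger than $\Vol(B_1(0))$ once $C$ is chosen large enough), so I will assume $r \leq 1/2$, in which case $B_{2r}(x) \subset B_2(0)$ for every $x \in B_1(0)$.

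First I would use Corollary \ref{cor_eps_stat} to show the inclusion
\begin{gather*}
 \cB_r(u) \;\subset\; E_r := \cur{x \in B_1(0) \ \ s.t. \ \ \theta_u(x,2r) \geq \epsilon},
\end{gather*}
where $\epsilon = \epsilon(m,N,p)$ is the constant in Corollary \ref{cor_eps_stat}. Indeed, if $\theta_u(x,2r)<\epsilon$, then $\norm{u}_{x,r}\leq 1$ and hence $r_u(x)\geq r$, so $x\notin \cB_r(u)$.

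Next I would cover $E_r$ by a Vitali family. Select a maximal collection of points $\{x_i\}_{i=1}^{N}\subset E_r$ such that the balls $\{B_r(x_i)\}$ are pairwise disjoint. By maximality every $y\in E_r$ satisfies $d(y,x_i)<2r$ for some $i$, so
\begin{gather*}
 E_r \;\subset\; \bigcup_{i=1}^N B_{2r}(x_i).
\end{gather*}
Since the centres $x_i$ are pairwise at distance at least $2r$, a standard packing argument gives a dimensional constant $c(m)$ such that $\sum_i \mathds{1}_{B_{2r}(x_i)} \leq c(m)$ pointwise. Combining this bounded overlap with the lower bound $\int_{B_{2r}(x_i)}\abs{\nabla u}^p \geq \epsilon (2r)^{m-p}$ coming from $x_i \in E_r$ yields
\begin{gather*}
 N\,\epsilon\,(2r)^{m-p} \;\leq\; \sum_{i=1}^N \int_{B_{2r}(x_i)}\abs{\nabla u}^p \;\leq\; c(m)\int_{B_2(0)} \abs{\nabla u}^p.
\end{gather*}

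Finally, summing the volumes of the covering balls gives
\begin{gather*}
 \Vol(\cB_r(u)) \;\leq\; \Vol(E_r) \;\leq\; N\, \omega_m (2r)^m \;\leq\; C(m,N,p)\, r^{p}\int_{B_2(0)}\abs{\nabla u}^p,
\end{gather*}
which is the claim. There is no real obstacle in this argument: the only non-routine input is the $\epsilon$-regularity statement of Corollary \ref{cor_eps_stat}, which is precisely where the assumption that $N$ is a compact homogeneous space with a left invariant metric is used; everything else is a Vitali covering together with the elementary bounded-overlap estimate.
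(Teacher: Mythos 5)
Your proof is correct and follows essentially the same route as the paper: Corollary \ref{cor_eps_stat} gives the inclusion of $\cB_r(u)$ into the set where the normalized energy exceeds $\epsilon$, and a Vitali covering with disjoint balls plus the scale-invariant energy lower bound counts the covering balls and yields the volume estimate. Your write-up is in fact slightly more careful than the paper's (you handle large $r$, use the $2r$-enlargement with an explicit bounded-overlap constant rather than the $5r$-covering lemma, and keep $\int_{B_2(0)}\abs{\nabla u}^p$ on the right-hand side throughout), but these are cosmetic differences, not a different argument.
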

\begin{proof}
 The theorem is an easy consequence of the inclusion
\begin{gather*}
 \cB_r(u)\subset \cur{x\in B_1(0) \ \ s.t. \ \ \theta(x,r)\geq \epsilon}\, .
\end{gather*}
Let $B_r(x_i)$ be pairwise disjoint balls with centers in $\cB_r(u)$ such that $\cB_r(u)\subset \bigcup_{i} B_{5r}(x_i)$. Then the number $N$ of such balls is bounded above by
\begin{gather*}
 N \epsilon r^{m-p} \leq \sum_i \int_{B_{r}(x_i)} \abs{\nabla u}^p dV \leq \Lambda\, ,
\end{gather*}
and the thesis follows immediately.
\end{proof}
The example before shows that this result is in some sense sharp.

The aim of the following sections is to prove that the following result can be improved if $p$ is not an integer, or else if there exists no continuous nonconstant stationary $p$-harmonic map from $S^p$ into $N$.

Since stationary $m$-harmonic maps into symmetric targets are regular if the domain has dimension $m$, the following theorem about removable singularities should not be surprising.
\begin{theorem}\label{th_rem}\cite[Theorem 5.1]{mouya}
 Let $u:B_1(0)\setminus\{0\}\subset \R^m \to N$ be an $m$-harmonic map in $C^{1}(B_1(0)\setminus \{0\})$. If $u\in W^{1,m}(B_1(0))$, the singularity in $0$ is removable.
\end{theorem}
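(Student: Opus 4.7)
The theorem is cited from \cite{mouya}, so one expects a relatively standard capacity-based argument. The strategy has two ingredients: first upgrade $u$ from a map that is $n$-harmonic only on the punctured ball to a stationary weakly $n$-harmonic map on all of $B_1(0)$; then apply the $\epsilon$-regularity theorem at the origin. Here I use that $m=n$, so the exponent of the map and the domain dimension coincide (this is what makes the single point $\{0\}$ a set of zero $n$-capacity, the critical exponent).

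The plan is as follows. First I would exploit that a single point in $\R^n$ has zero $n$-capacity by building cutoffs $\eta_\epsilon\in C^\infty_c(\R^n)$ with $\eta_\epsilon\equiv 1$ near $0$, $\mathrm{supp}(\eta_\epsilon)\subset B_\epsilon(0)$, and $\int|\nabla\eta_\epsilon|^n\to 0$ as $\epsilon\to 0$ (e.g.\ the standard logarithmic cutoff). Given any test map $\phi\in C^\infty_c(B_1(0),\R^N)$, write $\phi=\phi(1-\eta_\epsilon)+\phi\eta_\epsilon$. The first summand is compactly supported in the punctured ball, where by hypothesis $u$ satisfies the weak $n$-harmonic map equation. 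For the second summand, Hölder's inequality gives
\begin{gather}
\left|\int_{B_1(0)}|\nabla u|^{n-2}\nabla u\cdot\nabla(\phi\eta_\epsilon)\,dV\right|\leq C\|\nabla u\|_{L^n}^{n-1}\|\nabla(\phi\eta_\epsilon)\|_{L^n},
\end{gather}
and the right-hand side tends to $0$ as $\epsilon\to 0$ because $\nabla u\in L^n(B_1(0))$, $\phi$ is bounded, and $\|\nabla\eta_\epsilon\|_{L^n}\to 0$. Passing to the limit yields the weak equation on $B_1(0)$. Exactly the same cutoff argument, applied now to an arbitrary $\xi\in C^\infty_c(B_1(0),\R^m)$ inserted in the stationarity identity \eqref{eq_p-stat} (recall that $u\in C^1$ on $B_1(0)\setminus\{0\}$ is automatically stationary there), upgrades $u$ to a stationary $n$-harmonic map on the whole ball.

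Once stationarity on $B_1(0)$ is in hand, the conclusion follows from the $\epsilon$-regularity theorem. Indeed, since $p=n=m$ the normalized energy reduces to $\theta_u(0,r)=\int_{B_r(0)}|\nabla u|^n$, and the hypothesis $u\in W^{1,n}(B_1(0))$ together with absolute continuity of the integral force $\theta_u(0,r)\to 0$ as $r\to 0$. Pick $r_0$ so small that $\theta_u(0,2r_0)<\epsilon$, where $\epsilon$ is given by Corollary \ref{cor_eps_stat}; that corollary (valid here because $N$ is assumed compact homogeneous with a left invariant metric) immediately yields $u\in C^{1,\alpha}(B_{r_0}(0))$, and in particular $u$ extends $C^{1,\alpha}$-continuously across the origin, which is precisely the removability of the singularity.

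The main obstacle is step one, namely verifying that the stationarity (conservation law) identity, which involves the full stress-energy tensor quadratic in $\nabla u$, genuinely extends across $0$. The algebra is slightly more delicate than for the Euler--Lagrange equation because the integrand is of order $|\nabla u|^n$ rather than $|\nabla u|^{n-1}$, but the same Hölder plus $n$-capacity bound on $\|\nabla(\xi\eta_\epsilon)\|_{L^n}$ closes the estimate; this is the only place where the critical integrability $u\in W^{1,n}$ is used in an essential way.
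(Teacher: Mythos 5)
You should first note that the paper itself offers no proof of this statement: it is quoted verbatim from \cite[Theorem 5.1]{mouya}, so your argument has to stand on its own. Its architecture (capacity removal of the point singularity, then $\epsilon$-regularity at the origin) is reasonable, but there is a genuine gap at precisely the step you flag as "the main obstacle" and then wave away. For the Euler--Lagrange equation your Hölder estimate does close: $\abs{\nabla u}^{n-2}\nabla u$ has modulus $\abs{\nabla u}^{n-1}\in L^{n/(n-1)}$, which pairs against $\norm{\nabla(\phi\eta_\epsilon)}_{L^n}\to 0$ (you should also record the right-hand side term $\abs{\nabla u}^{n-2}II(u)(\nabla u,\nabla u)$, but it is order $\abs{\nabla u}^n$ against the bounded factor $\phi\,\eta_\epsilon$ supported in $B_\epsilon$, so absolute continuity of $\int\abs{\nabla u}^n$ kills it). For the stationarity identity \eqref{eq_p-stat}, however, the stress--energy integrand is of size $\abs{\nabla u}^{n}\,\abs{\nabla(\xi\eta_\epsilon)}$, and $\abs{\nabla u}^{n}$ lies only in $L^1$. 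Hölder against $\norm{\nabla(\xi\eta_\epsilon)}_{L^n}$ would require $\abs{\nabla u}^{n}\in L^{n/(n-1)}$, i.e.\ $\nabla u\in L^{n^{2}/(n-1)}$, which is not available; and the complementary pairing $L^1\times L^\infty$ fails because $\norm{\nabla\eta_\epsilon}_{L^\infty}\to\infty$ for every cutoff with small $n$-capacity. So "the same Hölder plus $n$-capacity bound" does not close the estimate, and as written step one is incomplete.

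The gap is repairable, because the singularity is a single point rather than a general null-capacity set. Since \eqref{eq_p-stat} involves only $\partial_i\xi^j$, constant vector fields are annihilated by it, so you may normalize $\xi(0)=0$, whence $\abs{\xi(x)}\leq L\abs{x}$ with $L=\operatorname{Lip}(\xi)$. Taking the radial cutoff $\eta_\epsilon(x)=\eta(\abs{x}/\epsilon)$, with $\abs{\nabla\eta_\epsilon}\leq C/\abs{x}$ supported in $B_{2\epsilon}\setminus B_\epsilon$, the dangerous product satisfies $\abs{\xi}\abs{\nabla\eta_\epsilon}\leq CL$ uniformly, so the bad term is bounded by $CL\int_{B_{2\epsilon}}\abs{\nabla u}^n\to 0$ by absolute continuity, and the remaining term $\eta_\epsilon\,\partial_i\xi^j$ is handled the same way; this is the missing idea. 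With that repair, your second step is fine: since $m=n=p$ the normalized energy is $\theta_u(0,r)=\int_{B_r(0)}\abs{\nabla u}^n\to 0$, and Corollary \ref{cor_eps_stat} gives $C^{1,\alpha}$ regularity across the origin. Two caveats remain: your route invokes the Toro--Wang $\epsilon$-regularity theorem and therefore proves the statement only for compact homogeneous targets with a left invariant metric --- sufficient for how Theorem \ref{th_rem} is used in this paper (it is the standing assumption of the section), but strictly weaker than \cite{mouya}, which holds for general compact $N$; and the statement's $\R^m$ must indeed be read with $m=n$, as you correctly assumed.
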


\subsection{The defect measure}
As we have seen, weak convergence of stationary maps does not imply strong convergence. The defect measure studied in \cite{lin_stat} gives a quantitative tool to measure how far the convergence is from being strong. In this section we study some of the properties of the defect measure. Most of the results are easy generalizations of the equivalent results available in \cite[section 1]{lin_stat} for the $p=2$ case, thus sometimes we will refer the reader to this article for the proofs.

The aim of this section is to show that the defect measure is absolutely continuous wrt the $\cH^{m-p}$ Hausdorff measure, and that it satisfies all the properties needed in order to apply the Federer's dimension reduction argument (see \cite[Appendix A]{sim_stat}).
\begin{remark}
 Throughout this section the $\epsilon$-regularity theorem \ref{th_eps_stat} will be an essential tool. Thus we will always assume to work with $p$-stationary maps $u:B_3(0)\to N$, where the target space $N$ is a compact homogeneous manifold with a left invariant metric.
\end{remark}

Let $\cH(\Lambda)$ be the set of stationary $p$-harmonic maps $u:B_2(0)\to N$ such that $\theta_u(x,2)\leq \Lambda$ for all $x\in B_1(0)$, and $\overline{\cH(\Lambda)}$ be its weak closure in the $W^{1,p}$ sense (recall that in this case the weak closure coincides with the weak sequential closure). Since $\theta(x,2)\leq (3/2)^{m-p} \theta(0,3)$, it is easy to see that
\begin{gather*}
 \theta_u(0,3)\leq \ton{\frac 2 3}^{m-p}\Lambda \quad \Longrightarrow \quad  u\in \cH(\Lambda)\, .
\end{gather*}

Consider a sequence $u_i\in \cH(\Lambda)$ and the corresponding sequence of measures $\abs{\nabla u_i}^p dV$. Given the uniform bound on the $p$-energies of $u_i$, up to passing to a subsequence, we can write that $u_i\wto u$ in the weak $W^{1,p}$ sense, and also that $\abs{\nabla u_i}^p dV \wto d\mu$ in the sense of weak convergence of measures. Note that by Fatou's lemma we can write
\begin{gather*}
 \abs{\nabla u_i}^p dV \wto d\mu = \abs{\nabla u}^p dV + d\nu\, ,
\end{gather*}
where $\nu$, a nonnegative Radon measure, is defined to be the defect measure.

Let $\cM(\Lambda)$ be the set of Radon measures $d\mu$ which can be obtained in this way. Note that $\cM(\Lambda)$ is closed under weak convergence in the sense of measures.

Following the study of the defect measure in \cite{lin_stat}, we generalize the results of this article to generic $p\in (1,\infty)$, and not only $p=2$. Since all the proofs in this section are similar to the ones in \cite{lin_stat}, we will sketch only the more complex ones.

\begin{theorem}\label{th_m-p}
 Let $u_i$ be a sequence in $\cH(\Lambda)$ such that $u_i\wto u$ in $W^{1,p}$ and $\abs{\nabla u_i}dV \wto d\mu$. Define the set
 \begin{gather*}
  \Sigma = \bigcap_{r>0} \cur{x\in \overline{B_1(0)} \ \ s.t. \ \ \liminf_{i\to \infty} r^{p-m}\int_{B_r(x)} \abs{\nabla u_i}^p dV > \epsilon }\, ,
 \end{gather*}
where $\epsilon=\epsilon(m,N,p)$ is chosen according to Theorem \ref{th_eps_stat}. Then
\begin{enumerate}
 \item $\Sigma$ is a closed subset of $B_1(0)$,
 \item $\Sigma$ has bounded $m-p$ Minkowski content, more precisely
 \begin{gather*}
  \Vol \ton{B_r(\Sigma)} \leq C(m,N,p,\Lambda)r^{m-p}\, ,
 \end{gather*}
 \item $\Sigma= \operatorname{supp}(\nu) \cup \operatorname{sing}(u)$, where $\operatorname{sing}(u)=\cur{x\in B_1(0) \ \ s.t. \ \ u \text{ is not } C^{1,\alpha} \text{ around } x}$ is the singular set of $u$,
 \item $d\nu$ is absolutely continuous wrt $\cH^{m-p}$. Moreover for almost all $x\in \Sigma$ wrt $\cH^{m-p}$, $d\nu = f(x) \cH^{m-p}|_{\Sigma}$ where $\epsilon \leq f(x)\leq C(n,\Lambda)$,
\end{enumerate}
\end{theorem}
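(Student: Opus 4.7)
The plan is to combine the $\epsilon$-regularity theorem for stationary maps (Corollary \ref{cor_eps_stat}) with the weak convergence $\abs{\nabla u_i}^p dV \wto d\mu = \abs{\nabla u}^p dV + d\nu$. The guiding principle is that $\Sigma$ is essentially the mass concentration set of $\mu$ at scale $m-p$, so the four claims reduce to standard density arguments once we relate $\liminf_i \theta_{u_i}(x,r)$ to $\mu(\bar B_r(x))$. The inequalities $r^{p-m}\mu(B_r(x)) \leq \liminf_i \theta_{u_i}(x,r) \leq r^{p-m}\mu(\bar B_r(x))$, coming from lower/upper semicontinuity of weak limits of measures on open/closed sets, together with the monotonicity of $\theta_{u_i}$ in $r$, are the key inputs.

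For closedness (1), I would use upper semicontinuity of $x \mapsto \mu(\bar B_r(x))$ for fixed $r$: if $x_k \to x_\infty$ with $x_k \in \Sigma$, then $B_{r'}(x_k) \subset B_r(x_\infty)$ eventually for any $r' < r$, so $\theta_{u_i}(x_\infty,r) \geq (r'/r)^{m-p}\theta_{u_i}(x_k,r')$; passing to the $\liminf$ in $i$ and then sending $r' \to r$ shows $x_\infty \in \Sigma$. The Minkowski bound (2) uses that $x \in \Sigma$ forces $\mu(\bar B_r(x)) \geq \epsilon r^{m-p}$: a maximal disjoint family $\{B_r(x_j)\}$ with $x_j \in \Sigma \cap B_1(0)$ must have cardinality $\leq C\Lambda\epsilon^{-1} r^{p-m}$ by the total mass estimate $\mu(B_2(0)) \leq C\Lambda$, and then $B_r(\Sigma) \subset \bigcup_j B_{2r}(x_j)$ yields the claimed volume bound. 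For (3), the inclusion $\operatorname{supp}(\nu) \cup \operatorname{sing}(u) \subset \Sigma$ follows from $\epsilon$-regularity: outside $\Sigma$ one finds $\theta_{u_i}(x,r) \leq \epsilon$ along a subsequence, giving uniform $C^{1,\alpha}$ bounds by Corollary \ref{cor_eps_stat} and hence strong $C^1$ convergence near $x$, so both $\nu$ vanishes and $u$ is regular there. Conversely, if $x \in \Sigma$ but $\nu \equiv 0$ on a neighborhood of $x$, then $\mu = \abs{\nabla u}^p dV$ locally and $\theta_u(x,r) > \epsilon$ persists as $r \to 0$, which is incompatible with $C^{1,\alpha}$ regularity of $u$ at $x$.

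The main obstacle is (4). A localized version of the covering in (2), applied to $\Sigma \cap B_r(x)$, gives the upper density bound $\mu(B_r(x)) \leq C(\Lambda)r^{m-p}$; combined with the lower bound $\geq \epsilon$ built into the definition of $\Sigma$, this yields two-sided Ahlfors regularity for $\mu$ at scale $m-p$. A Federer-type differentiation lemma then shows that $\mu|_{\Sigma}$ is absolutely continuous with respect to $\cH^{m-p}|_{\Sigma}$ with density $f(x) \in [\epsilon, C(n,\Lambda)]$ at $\cH^{m-p}$-almost every $x \in \Sigma$. Since $\Sigma$ has Lebesgue measure zero (using (2) with $r \to 0$), the absolutely continuous part $\abs{\nabla u}^p dV$ contributes nothing to $\mu|_{\Sigma}$, and the same conclusion transfers to $\nu$. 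The technical difficulty is promoting the uniform-in-$r$ density bounds into pointwise $\cH^{m-p}$-almost-everywhere density bounds on $\Sigma$, which requires a careful Vitali or Besicovitch covering argument in the style of \cite{lin_stat}.
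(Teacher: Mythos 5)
Your proposal is correct and follows essentially the same path as the paper's own (sketched) proof, which likewise combines the $\epsilon$-regularity theorem, the monotonicity of $\theta$, the covering argument of Theorem \ref{th_p_mink}, and the standard density comparison theorems of \cite[section 2.4.3]{EG}, referring to \cite{lin_stat} for the remaining details. Your two small deviations --- proving closedness of $\Sigma$ via weak-$*$ semicontinuity and monotonicity instead of using $\epsilon$-regularity to show the complement is open (which, strictly, yields the threshold ``$\geq\epsilon$'' rather than ``$>\epsilon$'', a harmless adjustment of constants that the paper also elides), and handling point (4) by noting that $\Sigma$ is Lebesgue-null so that $\nu=\mu|_{\Sigma}$ once $\operatorname{supp}(\nu)\subset\Sigma$ is known, rather than invoking the $\cH^{m-p}$-a.e.\ vanishing of $\limsup_{r\to 0}r^{p-m}\int_{B_r(x)}\abs{\nabla u}^p\,dV$ --- are sound and equivalent in substance.
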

\begin{proof}
 The proof of this theorem is based on standard covering arguments and the monotonicity of the normalized $p$-energy for stationary harmonic maps, which in turn easily yields the monotonicity of the quantity $\theta_\mu(x,r)= r^{p-m}d\mu(B_r(x))$. In the following, we sketch the main arguments in the proof. For more details, we refer the reader to \cite[Lemma 1.5 and Lemma 1.6]{lin_stat}.

 Point (1) follows easily from the $\epsilon$-regularity theorem. Indeed, if $x\not \in \Sigma$, then there exists a positive $r_x$ such that $\theta_\mu(x,2r_x)<\epsilon$. This implies that $u_i$ has uniform $C^{1,\alpha}$ bounds on $B_{r_x}(x)$, and thus $B_{r_x}(x)\cap \Sigma =\emptyset$.
 
 The uniform volume bound can be obtained by the same covering argument used in the proof of Theorem \ref{th_p_mink}.
 
 As for point (3), if $x\in B_1(0)\setminus \Sigma$, then the uniform $C^{1,\alpha}$ bounds given by the $\epsilon$-regularity theorem imply that $u_i$ converge in the $C^1$ sense to $u$. Thus $u$ is $C^{1,\alpha}$ around $x$ and $x\not \in \operatorname{supp}(\nu) \cup \operatorname{sing}(u)$. On the other hand, if $x\in \Sigma \setminus \operatorname{sing}(u)$, then there exists a radius $r_x$ small enough such that for all $s\leq r_x$, $s^{p-m}\int_{B_s(x)} \abs{\nabla u}^p <\epsilon/4$. Thus $s^{p-m}\nu(B_s(x))>0$, and so $x\in \operatorname{supp}(\nu)$.
 
 The last point follows from the monotonicity of the energy. Indeed, for all $x\in B_1(0)$ and $r<1$, we have $r^{p-m}\mu(B_r(x))\leq \mu(B_1(x)) \leq \Lambda$, thus $\mu$ is absolutely continuous wrt $\cH^{m-p}$. In particular, there exists a function $f$ such that $\mu=f(x)\cH^{m-p}|_\Sigma$.

 Moreover, by \cite[section 2.4.3]{EG}, $\limsup_{r\to 0} r^{p-m}\int_{B_r(x)} \abs{\nabla u}^p =0$ for $\cH^{m-p}$ a.e. $x\in \Sigma$. Thus we obtain the thesis.

\end{proof}

By Proposition \ref{prop_mono_stat}, it is easy to see that if $\theta_u(x,r)=\theta_u(x,0)$, then $u$ is a homogeneous function on $B_r(x)$. The next lemma, which is an immediate generalization of \cite[Lemma 1.7]{lin_stat}, shows that the same property holds for any measure $d\mu\in \cM(\Lambda)$.
\begin{lemma}\label{lemma_homdef}
 Let $u_i$ be a sequence in $\cH(\Lambda)$ such that $u_i\wto u$ in $W^{1,p}$ and $\abs {\nabla u_i}^p dx \wto d\mu = \abs{\nabla u}^p dx + d\nu$. Suppose also that for some $r_i\to 0$,
 \begin{gather*}
  \theta^{u_i} (0,1)-\theta^{u_i} (0,r_i) \to 0\, .
 \end{gather*}
Then both $\mu$ and $\nu$ are homogeneous measures, meaning that
\begin{gather*}
 d\mu = r^{m-p-1} dr d\sigma(\theta) \, ,
\end{gather*}
where the measure $\sigma $ is invariant wrt $r$, and $\partial_r u=0$ for a.e. $r\in (0,\infty)$.
\end{lemma}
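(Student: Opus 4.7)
The plan is to pass both the monotonicity formula and the stationary (Pohozaev) identity for $u_i$ to the limit, using the sandwich provided by the hypothesis to conclude that all radial derivatives vanish. Let $\Lambda_0 := \lim_i \theta^{u_i}(0,1)$, possibly after passing to a subsequence. By monotonicity of $\theta^{u_i}(0,\cdot)$ and the assumption, for every fixed $\rho \in (0,1]$ the sandwich
\[\theta^{u_i}(0,r_i) \leq \theta^{u_i}(0,\rho) \leq \theta^{u_i}(0,1)\]
(valid as soon as $r_i<\rho$) gives $\theta^{u_i}(0,\rho) \to \Lambda_0$. Weak convergence $\mu_i \wto \mu$, together with the fact that $\mu(\partial B_\rho) = 0$ for all but countably many $\rho$ and the right-continuity of $\rho \mapsto \mu(B_\rho)$, then forces $\mu(B_\rho) = \Lambda_0\rho^{m-p}$ for every $\rho \in (0,1)$.

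Next, I exploit the differential form of the monotonicity from Proposition \ref{prop_mono_stat}, namely
\[\theta^{u_i}(0,s) - \theta^{u_i}(0,r) = p\int_{B_s\setminus B_r}\abs{x}^{p-m}\abs{\nabla u_i}^{p-2}\abs{\partial_r u_i}^2 dx,\]
whose left-hand side tends to zero for every $0<r<s<1$. Since $\abs{x}^{p-m}$ is bounded below on such annuli, $\abs{\nabla u_i}^{p-2}\abs{\partial_r u_i}^2 \to 0$ in $L^1_\text{loc}(B_1\setminus\{0\})$. For $p\geq 2$ the elementary pointwise bound $\abs{\partial_r u_i}^p \leq \abs{\nabla u_i}^{p-2}\abs{\partial_r u_i}^2$ applies; for $1<p<2$ Hölder with exponents $2/p$ and $2/(2-p)$ yields
\[\int\abs{\partial_r u_i}^p dx \leq \ton{\int\abs{\nabla u_i}^{p-2}\abs{\partial_r u_i}^2 dx}^{p/2}\ton{\int\abs{\nabla u_i}^p dx}^{(2-p)/2}.\]
In both cases $\partial_r u_i \to 0$ strongly in $L^p_\text{loc}(B_1\setminus\{0\})$, and combined with $\partial_r u_i \wto \partial_r u$ in $L^p$ this forces $\partial_r u=0$ a.e., so $u$ is $0$-homogeneous. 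Consequently $\abs{\nabla u}^p dx$ is already a cone measure of the form $r^{m-p-1}dr\, d\sigma_1(\theta)$.

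The third step is to pass the stationary identity \eqref{eq_p-stat} with test field $\xi(x)=\eta(\abs{x})x$, $\eta \in C_c^\infty((0,1))$, to the limit. A direct computation of $\partial_i\xi^j$ gives
\[(m-p)\int\eta(\abs{x})d\mu_i + \int \eta'(\abs{x})\abs{x}d\mu_i - p\int \eta'(\abs{x})\abs{x}\abs{\nabla u_i}^{p-2}\abs{\partial_r u_i}^2 dx = 0.\]
The first two terms converge by weak convergence against the continuous functions $\eta(\abs{x})$ and $\eta'(\abs{x})\abs{x}$, and the third vanishes by the previous step since $\eta'(\abs{x})\abs{x}$ is bounded on $\mathrm{supp}(\eta)$. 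Disintegrating $\mu$ in polar coordinates as $d\mu = d\lambda(r)\, d\mu_r(\theta)$ with $\mu_r$ probability measures on $S^{m-1}$, the limiting identity becomes the distributional equation $(m-p)\lambda - \partial_r(r\lambda) = 0$ on $(0,1)$, whose only solution compatible with Step 1 is $d\lambda = (m-p)\Lambda_0 r^{m-p-1}dr$.

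Finally, to upgrade the radial marginal to the full cone structure (angular invariance of $\mu_r$), I apply the same argument to the blow-ups $v_i^\lambda(x):=u_i(\lambda x)$, $\lambda\in(0,1)$. These also satisfy the hypothesis because $\theta^{v_i^\lambda}(0,1) - \theta^{v_i^\lambda}(0,r_i/\lambda) = \theta^{u_i}(0,\lambda) - \theta^{u_i}(0,r_i) \to 0$, and their weak limit is $\tilde\mu_\lambda(E) = \lambda^{p-m}\mu(\lambda E)$. Repeating the Pohozaev analysis with the tangential field $\xi(x)=\eta(\abs{x})\phi(x/\abs{x})$ for $\phi$ a $0$-homogeneous vector field (all terms involving $\partial_r u$ vanish in the limit by Step 2) yields enough identities to force $\tilde\mu_\lambda=\mu$ on $B_1$, equivalently $\mu(\lambda E)=\lambda^{m-p}\mu(E)$, which is the scaling invariance characterising cone measures. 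The defect $d\nu = d\mu - \abs{\nabla u}^p dx$ is then also of the same form. The main obstacle lies in this last step: the scalar measure $\mu$ does not record the full tensor $\abs{\nabla u_i}^{p-2}\nabla u_i\otimes \nabla u_i$ whose weak limit would a priori be needed for the tangential identities; this is bypassed exactly as in \cite[Lemma 1.7]{lin_stat}, using that every tensor contraction containing $\partial_r u$ vanishes in the limit thanks to Step 2, so only the scalar $d\mu$ survives.
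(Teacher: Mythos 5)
Your Steps 1--3 are sound and essentially parallel the paper: the sandwich argument gives $\theta^{u_i}(0,\rho)\to\Lambda_0$ for every $\rho$, the integrated form of \eqref{eq_p-der} gives $\abs{\nabla u_i}^{p-2}\abs{\partial_r u_i}^2\to 0$ in $L^1_{loc}(B_1\setminus\{0\})$ (your case split at $p=2$ is correct), hence $\partial_r u=0$ a.e., and the purely radial field $\xi=\eta(\abs x)x$ passes to the limit since the only tensor contraction it produces is $\ps{\nabla u_i\cdot x}{\nabla u_i\cdot x}$, which carries two factors of $\partial_r u_i$. Note, however, that Step 3 only recovers the radial marginal of $\mu$, which Step 1 already determined; the entire content of the lemma is the angular statement that the disintegrated measures $\mu_r$ on $S^{m-1}$ do not depend on $r$, and that is exactly where your argument breaks.

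The gap is in Step 4. Testing stationarity with a tangential field $\xi(x)=\eta(\abs x)\phi(x/\abs x)$ produces, from the term $p\,\abs{\nabla u_i}^{p-2}\nabla^i u_i\nabla_j u_i\,\partial_i\xi^j$, contractions of the form
\begin{gather}
\eta(\abs x)\,\abs{\nabla u_i}^{p-2}\,\ps{\nabla_{S^{m-1}} u_i}{\nabla_{S^{m-1}} u_i}:\nabla_{S^{m-1}}\phi\, ,
\end{gather}
which contain \emph{no} factor of $\partial_r u_i$: Step 2 cannot kill them, and their weak limit is governed by the matrix-valued measure $\abs{\nabla u_i}^{p-2}\nabla u_i\otimes\nabla u_i\,dx$, which is not determined by the scalar $d\mu$. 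Your closing claim that this is bypassed ``exactly as in \cite[Lemma 1.7]{lin_stat}'' mischaracterizes that argument: Lin --- and the paper --- never uses tangential fields. The paper's proof takes the \emph{radial} field with an angular scalar weight and the homogeneity weight,
\begin{gather}
\xi^j(x)=\psi_{a,\epsilon}(\abs x)\,\phi(\theta)\,\abs x^{p-m}x^j\, ,
\end{gather}
for which the exponent $p-m$ makes the pure $\abs{\nabla u_k}^p$ bulk terms cancel identically (the ``$+0$'' term in the paper's computation), so that what survives is precisely $\frac{d}{da}E_k(a,\epsilon)$ with $E_k(a)=\int_{S^{m-1}}\phi\,d\sigma_k(a,\theta)$, plus error terms each carrying at least one factor $\partial_r u_k$; these are then estimated by Cauchy--Schwarz against $\theta_k(t)-\theta_k(s)\to 0$ and the energy bound $\Lambda$, giving $\lim_k\int\phi\ton{d\sigma_k(t,\cdot)-d\sigma_k(s,\cdot)}=0$ for every angular test function $\phi$, i.e.\ the $r$-invariance of $\sigma$ directly. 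Since $\xi$ is parallel to $x$, no tangential--tangential tensor contraction ever appears --- that is the structural trick your proposal is missing, and without it the blow-up identity $\tilde\mu_\lambda=\mu$ (which you only verify on balls centered at the origin, where it is equivalent to Step 1) does not follow.
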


\begin{proof}
Consider the measures $\abs{\nabla u_k}^p dV = \abs{\nabla u_k}^p r^{m-1} dr d\theta \equiv r^{m-p-1} dr d\sigma_k(r,\theta)$. By the monotonicity formula, the limit function $u$ is homogeneous because $\partial_r u=0$ a.e. away from the origin. Thus $\mu$ is homogeneous if and only if $\nu$ is homogeneous.

We want to prove that for almost every $r,R$, and every smooth test function $\phi: S^{m-1}\to \R$
\begin{gather*}
 \int_{S^{m-1}} \phi(\theta) d\sigma(r,\theta)=\int_{S^{m-1}} \phi(\theta) d\sigma(R,\theta)\, .
\end{gather*}

In order to do that, let $\psi$ a standard mollifier, i.e., let $\psi$ be a function such that $\operatorname{supp}(\psi)\subset [-1,1]$, $\psi\geq 0$ and $\int_\R \psi =1$.

For $a>\epsilon$, define the functions
\begin{gather*}
 \psi_ {a,\epsilon}(x) = \frac 1 \epsilon \psi\ton{\frac {\abs x - a} \epsilon }\, ,\\
 E_k(a,\epsilon)=\int_{0}^\infty\int_{S^{n-1}} \phi(\theta) \psi_{a,\epsilon} d\sigma_k(r,\theta)dr\, .
\end{gather*}
Note that, for a.e. $r\in (0,\infty)$,
\begin{gather*}
 \lim_{\epsilon \to 0} E_k(a,\epsilon) =: E_k(a)= \int_{S^{n-1}} \phi(\theta) d\sigma_k(a,\theta)\, .
\end{gather*}

In order to prove that $d\sigma(r,\theta)$ is invariant wrt $r$, we will show that its derivative in $r$ is zero, at least in a weak sense. Consider that
\begin{gather}\label{eq_deae}
 \frac{d}{da} E_k(a,\epsilon) =  \int_0^\infty \int_{S^{n-1}} \phi(\theta) \frac{d}{da} \psi_{a,\epsilon} d\sigma_k(r,\theta)dr = -\int_0^\infty \int_{S^{n-1}} \phi(\theta) \partial_r\psi_{a,\epsilon} d\sigma_k(r,\theta)dr\, .
\end{gather}

Set for simplicity $\varphi(r,\theta) = \psi_{a,\epsilon}(r) \phi(\theta)$, and consider the vector field (which is smooth for $\epsilon < a$)
\begin{gather*}
 \xi^j(x) = \varphi(x) \abs x ^{p-m} x^j\, .
\end{gather*}
By equation \eqref{eq_p-stat},
\begin{gather*}
 \int \abs{\nabla u_k}^{p-2} \ton{\abs{\nabla u_k}^2 \delta_{ij} - p \nabla_i u_k \nabla_j u_k} \partial_i \xi^j dV =0\, ,
\end{gather*}
which immediately yields
\begin{align*}
& \int \abs{\nabla u_k}^{p-2} \ton{\abs{\nabla u_k}^2 \delta_{ij} - p \nabla_i u_k \nabla_j u_k} \ton{\partial_i \varphi \abs x ^{p-m} x^j + (p-m)\varphi \abs x^{p-m-2} x^i x^j + \varphi \abs x ^{p-m} \delta_{ij}} dV =0\, ,\\
&\nonumber \iint \abs{\nabla u_k}^p \partial_r \varphi r^{p-m+1} r^{m-1} dr d\theta -p \iint \abs{\nabla u_k}^{p-2} \ps{\nabla u_k}{\nabla \varphi} r^{p-m+1} \partial_r u_k r^{m-1} drd\theta+\\
&  -p(p-m) \iint  \varphi r^{p-m} \abs{\nabla u_k}^{p-2}\abs{\partial_r u_k}^2r^{m-1} drd\theta+0\iint \varphi r^{p-m} \abs{\nabla u_k}^p r^{m-1} drd\theta=0\, .
\end{align*}
Equivalently
\begin{gather*}
\nonumber \iint  \abs{\nabla u_k}^p r^p \partial_r \psi_{a,\epsilon} \phi(\theta) dr d\theta = p(p-m) \iint  \psi_{a,\epsilon} \phi(\theta) r^{p-1} \abs{\nabla u_k}^{p-2}\abs{\partial_r u_k}^2 drd\theta +\\
 +p \iint  \abs{\nabla u_k}^{p-2} \abs{\partial_r u_k}^2 \partial_r \psi_{a,\epsilon} \phi(\theta) r^{p} dr d\theta
 +p\iint \abs{\nabla u_k}^{p-2} \ps{\nabla_{S^{m-1}} u_k}{\nabla_{S^{m-1}}\phi(\theta) }_{S^{m-1}} \partial_r u_k \psi_{a,\epsilon} r^{p-2} dr d\theta\, .
\end{gather*}
By equation \eqref{eq_deae}, the derivative of $E$ can be expressed as
\begin{gather*}
\nonumber \frac{d}{da} E_k(a,\epsilon) = p(m-p) \iint \psi_{a,\epsilon} \phi(\theta) r^{p-1} \abs{\nabla u_k}^{p-2}\abs{\partial_r u_k}^2 drd\theta +\\
 - p\iint\abs{\nabla u_k}^{p-2} \partial_\theta u_k \partial_r u_k \psi_{a,\epsilon} \partial_\theta \phi(\theta) r^{p-2} dr d\theta
 + p\frac{d}{da} \iint \abs{\nabla u_k}^{p-2} \abs{\partial_r u_k}^2 \psi_{a,\epsilon} \phi(\theta) r^{p} dr d\theta \, .
\end{gather*}
Integrating this equation on $[s,t]$ we get
\begin{gather*}
\nonumber E_k(s,\epsilon)-E_k(t,\epsilon) = p(m-p)\int_t^s da \iint \psi_{a,\epsilon} \phi(\theta) r^{p-1} \abs{\nabla u_k}^{p-2}\abs{\partial_r u_k}^2 drd\theta +\\
 - p \int_t^s da \iint\abs{\nabla u_k}^{p-2} \partial_\theta u_k \partial_r u_k \psi_{a,\epsilon} \partial_\theta \phi(\theta) r^{p-2} dr d\theta +
  p\qua{ \iint \abs{\nabla u_k}^{p-2} \abs{\partial_r u_k}^2 \psi_{a,\epsilon} \phi(\theta) r^{p} dr d\theta}_s^t\, .
\end{gather*}
By letting $\epsilon\to 0$, we obtain (at least a.e. in $s$ and $t$):
\begin{gather*}
 E_k(s)-E_k(t) = p(m-p)\int_t^s da \int  \phi(\theta) a^{p-1} \abs{\nabla u_k}^{p-2}\abs{\partial_r u_k}^2 d\theta +\nonumber\\
 - p \int_t^s da \int\abs{\nabla u_k}^{p-2} \partial_\theta u_k \partial_r u_k \partial_\theta \phi(\theta) a^{p-2} d\theta +
  p\qua{ \int \abs{\nabla u_k}^{p-2} \abs{\partial_r u_k}^2 \phi(\theta) r^{p} d\theta}_s^t\, .
\end{gather*}
Note that, by \eqref{eq_p-der},
\begin{eqnarray}
&&0\leq p(m-p)\int_t^s da \int  \phi(\theta) a^{p-1} \abs{\nabla u_k}^{p-2}\abs{\partial_r u_k}^2 d\theta \leq \norm{\phi}_{\infty}(m-p) [\theta_k(t)-\theta_k(s)]\, ,\\
&&\abs{\int_t^s da \int\abs{\nabla u_k}^{p-2} \partial_\theta u_k \partial_r u_k \partial_\theta \phi(\theta) a^{p-2} d\theta } \nonumber\\
&&\leq\ton{\int_{B_t(0)\setminus B_s(0)} dV r^{p-m} \abs{\nabla u_k}^{p-2} \abs{\partial_r u_k}^2 }^{1/2} \ton{\int_{B_t(0)\setminus B_s(0)} dV r^{p-m} \abs{\nabla u_k}^{p-2} r^{-2}\abs{\partial_\theta u_k}^2 \abs{\partial_\theta \phi}^2 }^{1/2}\nonumber\\
&&\leq s^{(p-m)/2}\norm{\nabla \phi}_{\infty} \Lambda^{1/2} \ton{\theta_k(t)-\theta_k(s)}^{1/2}\, , \\
&&\abs{ \qua{ \int \abs{\nabla u_k}^{p-2} \abs{\partial_r u_k}^2 \phi(\theta) r^{p} dr d\theta}_s^t}\leq \norm{\phi}_{\infty} \ton{\abs{\theta_k'(t)}+\abs{\theta_k'(s)}}\, .
\end{eqnarray}

Thus we obtain that, for a.e. $s,t>0$,
\begin{gather*}
 \lim_{k\to \infty} \int \phi(\theta) \ton{d\sigma_k(t,\theta)-d\sigma_k(s,\theta)} = 0.
\end{gather*}
Let $\tau_a$ be a translation in the radial coordinate by $a$. This implies that for every $a$:
\begin{gather*}
 \lim_{k\to \infty} \tau_a(d\sigma_k dr) -d\sigma_k dr = 0\, \quad \Longrightarrow\quad \, \tau_a(d\sigma dr) =d\sigma dr \, .
\end{gather*}
Thus we have proved the invariance of the measure $d\sigma$, and in turn the homogeneity of $d\mu$ and $d\nu$.
\end{proof}

This lemma will play a crucial role in proving a generalization of Theorem \ref{th_eps_+} for stationary functions and in the dimension reduction properties of the space $\cM(\Lambda)$ explained in the following section.

\subsection{Dimension reduction arguments}
In this section, we show that the dimension reduction argument proved in \cite[Theorem A.4]{sim_stat} can be applied to the measures in $\cM(\Lambda)$. As a corollary, we will prove that if $p$ is not an integer, then there cannot be any defect measure, and if $p$ is an integer, $\cM(\Lambda)$ contains a constant multiple of $\cH^{m-p}|_L$, where $L$ is some $m-p$ dimensional subspace of $\R^m$.

\begin{definition}
 Given $\mu\in \cM(\Lambda)$, $y\in B_1(0)$ and $r\leq 2$, we define the Radon measure
 \begin{gather*}
  \mu_{y,r}(A)= r^{m-p} \mu(y+rA)\, .
 \end{gather*}
\end{definition}
It is clear from the definition that $\mu_{y,r}\in \cM(\Lambda)$ for every $r>0$ sufficiently small, and since $\cM(\Lambda)$ is closed under weak convergence of measure, given any sequence $r_k\to 0$, there always exists a subsequence such that $\mu_{y,r_{k_i}}\wto \mu_{y,0}\in \cM(\Lambda)$ (note that $\mu_{y,0}$ may depend on the sequence $r_{k_i}$).

\begin{definition}
 Let $\cF$ be the set of closed subsets of $\overline{B_1(0)}\subset \R^m$. Define the map $\pi:\cM(\Lambda)\to \cF$ by $\pi(\mu)=\Sigma$, where $\Sigma$ is the set defined in Theorem \ref{th_m-p}.
\end{definition}

The following lemma generalizes \cite[Lemma 1.7]{lin_stat} and is the key to proving the dimension reduction properties.
\begin{lemma}\label{lemma_17}
 Let $\mu\in \cM(\Lambda)$, $y\in B_1(0)$ and $\lambda \leq 2$. Then
 \begin{enumerate}
  \item $\cM(\Lambda)$ is closed under rescaling, meaning that $\mu_{y,\lambda}$ belongs to $\cM(\Lambda)$,
  \item given any sequence $\lambda_k \to 0$, there exists a subsequence $\lambda_{k_i}$ such that
  \begin{gather*}
   \mu_{y,\lambda_{k_i}}\wto \bar \mu\in \cM(\Lambda) \quad \text{ with } \quad \bar \mu_{0,r}=\bar \mu \ \ \ \forall r>0\, ,
  \end{gather*}
  \item $\pi(\mu_{y,\lambda})=\lambda^{-1}\ton{\pi(\mu)-y}$,
  \item if $\mu$ is absolutely continuous wrt the $n$-dimensional Lebesgue measure, then $\pi(\mu)=\emptyset$
  \item if $\mu_k\wto \mu$, then for every $\epsilon>0$, there exists $\bar k(\epsilon)$ such that for $k\geq \bar k$:
  \begin{gather*}
   \pi(\mu_k) \subset \cur{x\in \overline B_1(0) \ \ s.t. \ \ d(x,\pi(\mu))<\epsilon}\, .
  \end{gather*}
 \end{enumerate}
\end{lemma}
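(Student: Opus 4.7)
The plan is to exploit the monotonicity of the normalized $p$-energy, the weak compactness of $\cM(\Lambda)$, and Lemma \ref{lemma_homdef} in a systematic way. Parts (1), (3), (5) are essentially formal scaling/compactness statements, part (2) is the technical heart and feeds the Federer dimension reduction, and part (4) combines (2) with a Lebesgue-differentiation argument.

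For (1), pick a realizing sequence $u_i \in \cH(\Lambda)$ with $\abs{\nabla u_i}^p dV \wto \mu$ and consider the translated/rescaled stationary maps $u_i^{y,\lambda}(x) := u_i(y+\lambda x)$. A change of variables yields $\abs{\nabla u_i^{y,\lambda}}^p dV \wto \mu_{y,\lambda}$ at the level of measures (the scaling convention $r^{m-p}$ in the definition should read $r^{p-m}$ for dimensional consistency with the energy), and the bound $\theta^{u_i^{y,\lambda}}(x',2) = \theta^{u_i}(y+\lambda x', 2\lambda) \leq \theta^{u_i}(y+\lambda x', 2) \leq \Lambda$ follows from monotonicity whenever $y+\lambda x' \in B_1(0)$, placing $u_i^{y,\lambda} \in \cH(\Lambda)$; the precise allowable range of $\lambda$ is handled by slightly enlarging the base ball as in the remark preceding the definition of $\cH(\Lambda)$. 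Part (3) reduces to the direct scaling identity $\theta_{\mu_{y,\lambda}}(x,r) = \theta_\mu(y+\lambda x, \lambda r)$ together with the intrinsic reformulation $\pi(\mu) = \{x : \lim_{r \to 0^+}\theta_\mu(x,r) \geq \epsilon\}$ provided by the monotonicity of $\theta_\mu$. For (5), given $x_k \in \pi(\mu_k)$ with $x_k \to x$, for every $r' < r$ and large $k$ we have $B_{r'}(x) \subset B_r(x_k)$, so weak convergence of measures gives $\mu(B_{r'}(x)) \leq \liminf_k \mu_k(B_r(x_k))$; multiplying by $r^{p-m}$, using $\theta_{\mu_k}(x_k,r) \geq \epsilon$, letting $r' \nearrow r$ along continuity points of $\theta_\mu(x,\cdot)$, and finally $r \to 0$ yields $x \in \pi(\mu)$.

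For (2), by (1) the rescalings $\mu_{y,\lambda_k}$ lie in $\cM(\Lambda)$ with mass bounded by $\Lambda$, so weak compactness provides a subsequence converging to some $\bar\mu \in \cM(\Lambda)$. To upgrade this to the full self-similarity $\bar\mu_{0,r} = \bar\mu$, I would realize $\bar\mu$ as $\abs{\nabla v_i}^p dV \wto \bar\mu$ via a diagonal extraction $v_i := u_i^{y,\lambda_{k(i)}}$, and check the pinching hypothesis of Lemma \ref{lemma_homdef}. Since $\theta^{v_i}(0,s) = \theta^{u_i}(y,\lambda_{k(i)} s)$, and since weak convergence together with monotonicity of the limit profile $\theta_\mu(y,\cdot)$ force $\theta^{u_i}(y,\lambda_{k(i)} s) \to \theta_\mu(y, 0^+)$ for every fixed $s > 0$ as $i \to \infty$, a sufficiently slow choice of $r_i \to 0$ produces the pinching $\theta^{v_i}(0,1) - \theta^{v_i}(0,r_i) \to 0$. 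Lemma \ref{lemma_homdef} then forces $\bar\mu$ into the product form $r^{m-p-1} dr\, d\sigma(\theta)$ with $\sigma$ independent of $r$, which is exactly the scale-invariance $\bar\mu_{0,r} = \bar\mu$ for every $r > 0$.

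For (4), assume by contradiction $y \in \pi(\mu)$. Parts (2), (3), (5) jointly deliver a nonzero self-similar tangent $\bar\mu$ at $y$ with $0 \in \pi(\bar\mu)$, in particular $\theta_{\bar\mu}(0,r) \geq \epsilon$ for all $r > 0$. If the density $g$ of $\mu$ is locally bounded this is refuted immediately by $\theta_\mu(y,r) \leq \omega_m \norm{g}_\infty r^p \to 0$. The main obstacle, and where I expect the principal difficulty to lie, is extending this to $g \in L^1_{loc}$: absolute continuity is not preserved under weak limits of rescalings, so one cannot simply conclude $\bar\mu \ll \mathcal{L}^m$. The argument must exploit the product form of $\bar\mu$ from (2) together with Lebesgue differentiation of $\mu$ at $y$ to show that a nonzero $(m-p)$-homogeneous tangent, realized as a weak limit of rescalings of an absolutely continuous measure, forces a radial density scaling incompatible with the local integrability of $g$ near $y$, yielding the desired contradiction.
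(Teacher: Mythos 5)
Your parts (1)--(3) match the paper, which asserts them as direct consequences of the definitions: your observation that the scaling factor in the definition of $\mu_{y,r}$ should be $r^{p-m}$ rather than $r^{m-p}$ is a correct catch, and your proof of (2) is the paper's argument up to a cosmetic difference --- the paper uses the closedness of $\cM(\Lambda)$ under weak convergence to take an \emph{arbitrary} realizing sequence $w_i$ for $\bar\mu$ and extracts the pinching $\theta_{w_i}(0,1)-\theta_{w_i}(0,r)\to 0$ from the constancy of $\theta_{\bar\mu}(0,\cdot)$ (itself forced by monotonicity and the identity $\theta_\mu(x,r)=\theta_{\mu_{x,r}}(0,1)$), whereas you re-realize $\bar\mu$ by a diagonal sequence of rescaled maps; both routes feed Lemma \ref{lemma_homdef} in the same way.

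In (5), however, your portmanteau step points the wrong way. From $B_{r'}(x)\subset B_r(x_k)$ you obtain $\mu(B_{r'}(x))\leq \liminf_k \mu_k(B_r(x_k))$, which is an \emph{upper} bound on the $\mu$-mass near $x$; combined with the lower bound $\theta_{\mu_k}(x_k,r)\geq \epsilon$ on the right-hand side it yields nothing about $\theta_\mu(x,\cdot)$. What is needed is the reverse containment: for $r<s$ one has $B_r(x_k)\subset \overline{B_{(r+s)/2}(x)}$ for $k$ large, and upper semicontinuity of weak limits on compact sets gives
\begin{gather}
\mu\ton{B_s(x)}\geq \limsup_k \mu_k\ton{\overline{B_{(r+s)/2}(x)}}\geq \limsup_k \mu_k\ton{B_r(x_k)}\geq \epsilon\, r^{m-p}\, ,
\end{gather}
whence $\theta_\mu(x,s)\geq (r/s)^{m-p}\epsilon$ for every $r<s$; this is precisely the paper's displayed inequality $\theta_{\bar\mu}(x,r+\delta)\geq \ton{1+\delta/r}^{p-m}\epsilon$. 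The fix is one line, but the step as written fails.

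The genuine gap is (4), which you explicitly leave open for densities $g\in L^1_{loc}$, proposing a blow-up strategy you do not carry out. The paper runs no blow-up here at all: it dismisses (4) as an immediate consequence of the definition of $\pi(\mu)=\Sigma$ in Theorem \ref{th_m-p}, the relevant mechanism being that every $x\in\pi(\mu)$ carries the density lower bound $\mu(\overline{B_r(x)})\geq \epsilon\, r^{m-p}$ at \emph{every} scale $r$, directly at the original measure, so no preservation of absolute continuity under rescaled limits is ever needed. Moreover, your suspicion that the general-$L^1_{loc}$ case hides a real difficulty is in fact vindicated in the strongest sense: for arbitrary absolutely continuous $\mu\in\cM(\Lambda)$ the literal statement is not provable, since the constant sequence $u_i=u$ with $u(x)=x/\abs{x}$ (minimizing, hence stationary, for instance with $p=2$, $m=3$, $N=S^2$) produces $\mu=c\abs{x}^{-p}dV\in\cM(\Lambda)$, which is absolutely continuous and yet has $0\in\pi(\mu)$ by $\epsilon$-regularity. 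So the program you sketch --- deriving a contradiction between a nonzero homogeneous tangent and local integrability of $g$ --- cannot be completed in that generality. Property (4) must be read as it is actually used in the Federer--Simon dimension reduction of Proposition \ref{prop_nicelim}, where it only ever meets translation-invariant measures, i.e.\ constant multiples of $\mathcal{L}^m$; for those, $\theta_\mu(x,r)=c\,\omega_m\, r^p\to 0<\epsilon$ gives $\pi(\mu)=\emptyset$ at once, which is the ``easy consequence'' the paper intends.
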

\begin{remark}
 Note that properties $1$ to $5$ coincide with properties A.1, A.2 and A.3 in \cite{sim_stat}.
\end{remark}
\begin{proof}
Properties $1$ and $3$ follow directly from the definitions given above, while property 4 is an easy consequence of the definition of $\pi(\mu)=\Sigma$ given in Theorem \ref{th_m-p}.

Property $2$ is a direct consequence of Lemma \ref{lemma_homdef} and the monotonicity of $\theta$. First of all, observe that
\begin{gather*}
 \theta_\mu(x,r)=r^{p-m} \mu(B_r(x))
\end{gather*}
is a monotone nondecreasing quantity for all $\mu\in \cM(\Lambda)$. Moreover $\theta_{\mu}(x,r)=\theta_{\mu_{x,r}} (0,1)$, and thus $\theta_{\bar \mu}(0,r)=\theta_{\bar \mu}(0,0)$ for all $r>0$.

Consider a sequence of functions $w_i\in \cH(\Lambda)$ such that $\abs{\nabla w_i}^p dV \wto \bar \mu$. The weak convergence implies that for all $\epsilon$ and $r>0$
\begin{gather*}
 \lim_{i\to \infty} \theta_{w_i}(0,1)=\lim_{i\to \infty} \int_{B_1(0)} \abs{\nabla w_i}^p dV \leq \bar \mu(B_{1+\epsilon}(0)) 
 =(1+\epsilon)^{m-p} \theta_{\bar \mu}(0,0)\, ,\\
 \lim_{i\to \infty} \theta_{w_i}(0,r)=\lim_{i\to \infty} r^{p-m}\int_{B_r(0)} \abs{\nabla w_i}^p dV \geq r^{p-m}\bar \mu(B_{r(1-\epsilon)}(0)) 
 =(1-\epsilon)^{m-p} \theta_{\bar \mu}(0,0)\, .
\end{gather*}
In other words, for every $r>0$ $\lim_{i\to \infty}\theta_{w_i}(0,1)-\theta_{w_i}(0,r)=0$, and property $2$ follows directly from Lemma \ref{lemma_homdef}.

As for property $5$, the proof is a simple application of the $\epsilon$-regularity theorem. Let $\mu_i$ be a sequence of measures in $\cM(\Lambda)$, and consider the sequence of compact sets $\pi(\mu_i)$. By Hausdorff compactness principle, up to passing to a subsequence, $\pi(\mu_i)\to E$, where $E$ is a closed set and the convergence is the Hausdorff convergence in $\R^m$. This in particular implies that for every $\epsilon>0$, there exists $\bar k(\epsilon)$ such that for $k\geq \bar k$:
  \begin{gather*}
   \pi(\mu_k) \subset \cur{x\in \overline B_1(0) \ \ s.t. \ \ d(x,E)<\epsilon}\, .
  \end{gather*}
We are left to prove that $E\subset \pi(\mu)$. Let $x\in E$, then there exists a sequence $x_i\in \pi(\mu_i)$ such that $x_i\to x$ in the usual Euclidean sense. By definition of $\pi(\mu)$, $\theta_{\mu_i}(x,0)>\epsilon$, and by monotonicity of $\theta$, for all $r>0$ and for all $i$, $\theta_{\mu_i}(x_i,r)>\epsilon$.

This immediately implies that for all $\delta>0$ and for all $r>0$:
\begin{gather*}
 \theta_{\bar \mu}(x,r+\delta)=(r+\delta)^{p-m} \bar \mu(B_{r+\delta}(x)) \geq \ton{\frac{r+\delta}{r}}^{p-m}\lim_{i\to \infty} \theta_{\mu_i}(x_i,r) > \ton{1+\frac{\delta} {r} }^{p-m} \epsilon\, .
\end{gather*}
Thus we can conclude that $\theta_{\bar \mu}(x,0)>0 \ \Longleftrightarrow\ \theta_{\bar \mu}(x,0)>\epsilon$, and thus $x\in \pi(\bar \mu)$.

\end{proof}

As an application of this lemma, we can apply the dimension reduction argument in \cite[Appendix A]{sim_stat} and prove that if there exists a nonzero defect measure, then $\cM(\Lambda)$ contains a measure $\mu$ which is exactly a constant multiple of the $m-p$ Hausdorff measure on an $m-p$ dimensional subspace of $\R^m$. As a corollary, we obtain that there cannot be any nonzero defect measure if $p$ is not an integer.
\begin{proposition}\label{prop_nicelim}
 Suppose that there exists some sequence $u_i\in \cH(\Lambda)$ such that $\abs{\nabla u_i}^p dV \wto \abs{\nabla u}^p dV +d\nu$, where $d\nu\neq 0$. Then $p$ must be an integer, and there exists a sequence $w_i\in \cH(\Lambda)$ such that
 \begin{gather*}
  w_i\wto const\,  , \quad \quad \abs{\nabla w_i}^pdV \wto d\nu\, ,
 \end{gather*}
where $d\nu$ is a constant multiple of the $m-p$ Hausdorff measure on a $m-p$ subspace of $\R^m$.
\end{proposition}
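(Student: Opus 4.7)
The plan is to apply the Federer-type dimension reduction scheme of \cite[Appendix A]{sim_stat} to the class $\cM(\Lambda)$, relying on properties $(1)$--$(5)$ of Lemma \ref{lemma_17}. The goal is to produce a maximally symmetric element $\mu^\ast \in \cM(\Lambda)$ which is $0$-homogeneous and translation invariant along a linear subspace $L\subset \R^m$; the $(m-p)$-Minkowski content bound and the $\cH^{m-p}$-absolute continuity from Theorem \ref{th_m-p} will then force $\dim L = m-p$, yielding $p\in \N$, and the concentration of $\mu^\ast$ on $L$ (a set of zero Lebesgue measure) will imply that the weak $W^{1,p}$-limit of the approximating sequence is constant.

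Since $d\nu\neq 0$, Theorem \ref{th_m-p}(4) lets us pick $y_0\in \operatorname{supp}(\nu)$ with positive $(m-p)$-density. Blowing up $\mu_{y_0,r_k}$ along $r_k\to 0$, Lemma \ref{lemma_17}(2) yields a $0$-homogeneous $\mu^{(1)}\in \cM(\Lambda)$ with $0\in \pi(\mu^{(1)})$. We then iterate: given $\mu^{(j)}$ which is $0$-homogeneous about the origin and translation invariant along a subspace $L_j\subseteq \pi(\mu^{(j)})$, if there exists $z\in \pi(\mu^{(j)})\setminus L_j$ we apply Lemma \ref{lemma_17}(2) at $z$. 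The $0$-homogeneity gives the self-similarity relation $\mu^{(j)}_{w,r}=\mu^{(j)}_{\lambda w,\lambda r}$ for every $\lambda>0$; substituting $\lambda=(1-tr_k)^{-1}$ shows that translation by $tz$ commutes with the blow-up limit, so that $\mu^{(j+1)}$ is again $0$-homogeneous and is translation invariant along $L_{j+1}:=L_j\oplus \R z$. Since $\dim L_j\leq m$, the procedure terminates at some $\mu^\ast\in \cM(\Lambda)$ with $\pi(\mu^\ast)=L$ a subspace of dimension $k$, along which $\mu^\ast$ is invariant.

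By translation invariance along $L$ and Fubini, $\mu^\ast=\cH^k|_L\otimes \sigma$ for some Radon measure $\sigma$ on $L^\perp$, and the $0$-homogeneity together with the monotonicity of $\theta_{\mu^\ast}$ give $\mu^\ast(B_r)=c\,r^{m-p}$, that is,
\begin{gather}
\omega_k\int_{|y|<r}(r^2-|y|^2)^{k/2}\,d\sigma(y)=c\,r^{m-p}\quad \text{for all } r>0.
\end{gather}
Letting $r\to 0^+$ forces $k=m-p$ and $\sigma(\{0\})=c/\omega_k$; imposing the identity at every $r>0$ then yields $\sigma=(c/\omega_k)\delta_0$, so $\mu^\ast=a\,\cH^{m-p}|_L$ and $p=m-k\in \N$. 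Finally, extracting by diagonalization a sequence $w_i\in \cH(\Lambda)$ with $\abs{\nabla w_i}^pdV\wto \mu^\ast$ and (up to further extraction) $w_i\wto w$ weakly in $W^{1,p}$, the decomposition $\mu^\ast=\abs{\nabla w}^pdV+d\nu_w$ combined with the singularity of $\mu^\ast$ with respect to Lebesgue measure forces $\nabla w=0$, hence $w$ is constant and $d\nu_w=\mu^\ast$ is the required defect. The main technical obstacle is the iteration step, where one must verify that each blow-up preserves all previously accumulated symmetries while strictly adding the new direction $\R z$; this rests on the $0$-homogeneity propagated by property $(2)$ of Lemma \ref{lemma_17} together with the covariance $\pi(\mu_{z,\lambda})=\lambda^{-1}(\pi(\mu)-z)$ in property $(3)$.
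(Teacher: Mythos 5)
Your skeleton --- blow-up at a point of positive $(m-p)$-density, iterated dimension reduction via Lemma \ref{lemma_17}, terminating at a homogeneous measure $\mu^\ast$ invariant along a subspace $L$ --- is the same as the paper's. Your finish is even a genuine simplification: once $\mu^\ast=a\,\cH^{m-p}|_L$ is known, deducing $\nabla w=0$ from $\abs{\nabla w}^p\,dV\leq \mu^\ast$ together with $\mu^\ast\perp\operatorname{Leb}$ is correct and avoids the paper's detour through the removable-singularity Theorem \ref{th_rem} and the homogeneity of the limit map. However, there is a genuine gap at the step where you identify $\dim L=m-p$. The functional equation
\begin{gather}
\omega_k\int_{|y|<r}\ton{r^2-|y|^2}^{k/2}\,d\sigma(y)=c\,r^{m-p}\qquad \forall\, r>0
\end{gather}
does \emph{not} force $k=m-p$: for any $k<m-p$, the measure $\sigma=c'\abs{y}^{-p}\,dy$ on $L^\perp\cong\R^{m-k}$ is a locally finite Radon measure (integrability at the origin holds precisely because $m-k-p>0$), and $\cH^k|_L\otimes\sigma$ is $0$-homogeneous of degree $m-p$, translation invariant along $L$, and satisfies your identity by pure scaling. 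In this case letting $r\to 0^+$ only yields $\sigma(\{0\})=0$, not a contradiction. So your slicing argument correctly rules out $k>m-p$ (since $c\,r^{m-p}\leq C r^k$ forces $k\leq m-p$), but not $k<m-p$; whether such a measure can actually arise in $\cM(\Lambda)$ is beside the point, since your argument invokes nothing beyond homogeneity, $L$-invariance, and the volume identity. Moreover, nothing in your iteration prevents it from terminating with $\pi(\mu^{(j)})\subseteq L_j$ while $\dim L_j<m-p$, which is exactly the scenario the flawed step was meant to exclude.

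The missing ingredient is the measure-theoretic payload that the paper carries through the induction: the positive $\cH^{m-p}$-measure of the singular set must be propagated into each blow-up limit. Concretely, one must (i) choose each new point $z\in\pi(\mu^{(j)})\setminus L_j$ to be a point of positive $(m-p)$-density of $\pi(\mu^{(j)})$ --- such a point exists as long as $\cH^{m-p}(\pi(\mu^{(j)}))>0$, because $\cH^{m-p}(L_j)=0$ when $\dim L_j<m-p$ --- and (ii) verify that $\cH^{m-p}(\pi(\mu^{(j+1)}))>0$ for the blow-up limit. Step (ii) is the compactness-covering argument in the paper's proof: if $\cH^{m-p}_\infty(\pi(\mu^{(j+1)}))=0$, cover this compact set by balls of arbitrarily small total $(m-p)$-content; by property (5) of Lemma \ref{lemma_17} the rescaled singular sets $\lambda_i^{-1}\ton{\pi(\mu^{(j)})-z}$ are eventually contained in the same cover, contradicting the density lower bound \eqref{eq_hinfty} at $z$. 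With this in hand the iteration cannot stop while $\dim L_j<m-p$ (the positive-measure singular set cannot be contained in $L_j$), and combined with your valid upper bound $k\leq m-p$ this gives $k=m-p$, hence $p\in\N$ and the nonexistence of a defect measure for non-integer $p$; the remainder of your proof then goes through as written.
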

\begin{proof}
By point (4) in Theorem \ref{th_m-p}, the measure $d\nu$ is absolutely continuous wrt $\cH^{m-p}$ and nonzero. Thus there exists a point $x\in \Sigma$ with positive $m-p$ density (see \cite[2.10.19]{Fed}). Specifically we have
\begin{gather}\label{eq_hinfty}
 \limsup_{r\to 0} \frac{\cH^{m-p}_\infty (\Sigma \cap B_r(x))}{r^{m-p}} >0 \quad \Longrightarrow \quad \exists \lambda_k\to 0 \ \ s.t. \ \ \lim_{k\to \infty} \frac{\cH^{m-p}_\infty (\Sigma \cap B_{\lambda_k}(x))}{\lambda_k^{m-p}} >0\, ,
\end{gather}
where $\cH^{m-p}_r (A)= \inf\cur{\sum_{j=1}^\infty \omega_{m-p}\ton{\frac{\diam(C_j)}{2}}^{m-p} \ \ s.t. \ \ C_j\subset \R^m \ \ \text{and}\ \ A\subset \cup_j C_j \ \ \text{and}\ \ \diam(C_j)\leq r}$.

By Lemma \ref{lemma_17}, up to passing to a subsequence, $\mu_{x,\lambda_k}\wto \bar \mu$, where $\bar \mu$ is homogeneous (and thus we can extend the definition of $\bar \mu$ to the whole $\R^m$).

Let $\bar \Sigma=\pi(\bar \mu)$ be the singular set of $\bar \mu$. We are going to show that this set must have positive $m-p$ Hausdorff measure. Indeed, suppose by contradiction that $\cH^{m-p}(\bar \Sigma)=0$, which is equivalent to $\cH^{m-p}_\infty (\bar \Sigma)=0$.
Then for every $\delta>0$ there exists a family of balls $B_{\rho_i}(z_i)=B_i$ such that $\bar \Sigma \subset \cup_i B_i$ and $\sum_i \rho_i^{m-p}\leq \delta$. Note that $\bar \Sigma$ is a compact set, thus, by Lemma \ref{lemma_17}, for all $k$ sufficiently large such that also $\Sigma_k=\pi(\mu_{x,\lambda_k})$ is contained in $\cup_i B_i$. Since $\pi(\mu_{x,\lambda_k})=\lambda_k^{-1} (\pi(\mu)-x)=\lambda_k^{-1} (\bar \Sigma-x)$, this contradicts \eqref{eq_hinfty}.

Define the set $S$ to be the invariant subspace of $\bar \mu$, i.e.,
\begin{gather*}
 S=\cur{y\in \R^m \ \ s.t. \ \ \bar \mu_{y,\lambda}=\bar \mu \ \ \forall \lambda>0}\, .
\end{gather*}
It is evident that $0\in S$. Moreover, by homogeneity of $\bar \mu$, $S$ is a vector subspace of $\R^m$.

Let $d\in \N$ be its dimension. If $d<m-p$, then there exists a point $x\in \bar \Sigma\setminus S$ with positive $m-p$ density. Let $r_k\to 0$ be such that $\bar \mu_{x,r_k}$ converges weakly to some measure $\mu'$ with $\cH^{m-p}(\pi(\mu'))>0$.

For all $y\in S$, $\bar \mu_{x+y,\lambda}=\bar \mu_{x,\lambda}$, and so $\mu'_{1,\lambda}=\mu'$. This proves that $S$ is an invariant subspace for $\mu'$ as well. Moreover, also $x$ belongs to the invariant space of $\mu'$. Indeed
\begin{gather*}
 \mu'_{x,1} = \lim \bar \mu_{x+r_k x,r_k} = \lim \bar \mu_{x,r_k/(1+r_k)} = \mu'\, ,
\end{gather*}
where the limits are in the weak measure sense. Note that $\theta_{\mu'}(0,r)=\theta_{\bar \mu}(x,0)>\epsilon$ for all $r$, thus $0$ is a singular point for $\mu'$.

Thus, if $d<m-p$, then there exists $\mu'\in \cM$ such that its invariant subspace $S'$ has dimension $d+1$ and all points in $S$ are singular points.

\paragraph{If $p$ is not an integer}
By applying induction on $d$ to the previous argument, we can find a measure $\mu\in \cM$ with an invariant set $S$ of dimension $m-\floor p > m-p$ containing only singular points. This contradicts the fact that the singular set of $\mu'$ must have Hausdorff dimension $m-p$. Thus, as long as $p$ is not an integer, there cannot be any nonzero defect measure. Moreover, the singular set of all $\mu\in \cM$ must have zero $m-p$ Hausdorff measure, and actually its Hausdorff dimension must be $\leq m-\ceil p$.

\paragraph{If $p$ is an integer}
By applying induction on $d$ to the previous argument, we can find a measure $\mu\in \cM$ with an invariant set $S$ of dimension $d=m-p$ containing only singular points. Note that the singular set $\Sigma$ of $\mu$ coincides with $S$. Indeed, $S\subset \Sigma$, and if there existed some $x\in \Sigma\setminus S$, then we could apply the blow-up arguments discussed above to obtain a homogeneous measure $\mu'\in \cM$ with invariant subspace $S'$ of dimension $d+1$ with $S'\subset \Sigma'$, which is impossible.

Now consider a sequence $u_i\in \cH(\Lambda)$ with $\abs{\nabla u_i}^p dV \wto d\mu$ and $u_i\wto u$ in the weak $W^{1,p}$ sense. It is easy to see that for every $\epsilon,r>0$ and every $x\in S$:
\begin{gather*}
 \limsup_k \theta_{u_k}(x,1)\leq \int_{B_{1+\epsilon}(x)} d\mu = (1+\epsilon)^{m-p} \theta_\mu (x,r)=(1+\epsilon)^{m-p} \theta_\mu (0,0)\, ,\\
 \liminf_k \theta_{u_k}(x,r)\geq r^{m-p} \int_{B_{r(1-\epsilon)}(x} d\psi = (1-\epsilon)^{m-p} \theta_\mu (x,0)=(1-\epsilon)^{m-p} \theta_\mu (0,0)\, .
\end{gather*}
Thus for each $x\in S$, there exists a sequence $r_k\to 0$ such that $\theta_{u_k}(x,1)-\theta_{u_k}(x,r_k)\to 0$. By Lemma \ref{lemma_homdef}, both $u$ and the defect measure $\nu$ are homogeneous wrt every point $x\in S$, and thus $S$ is an invariant set for both $u$ and $d\nu$.

In particular, $u$ induces a homogeneous $p$-harmonic map $u:\R^p\setminus \{0\}\to N$ with finite $p$-energy. By the removable singularity Theorem \ref{th_rem}, $u$ can be extended to a $C^{1,\alpha}$ map on the whole $\R^p$. Moreover, since this map is continuous and homogeneous, it has to be constant.

As for the measure $d\nu$, its support must be the invariant subspace $S$, and thus $d\nu(A)= c \cH^{m-p}(A\cap S)$, where $c$ is either $0$ or some constant $>\epsilon$.

\end{proof}

\subsection{\texorpdfstring{Defect measure and $p$-harmonic spheres for integer $p$}{Defect measure and p-harmonic spheres for integer p}}

Here we study the case where $p$ is an integer, following the analysis made by Lin in \cite{lin_stat}.

We want to show that
\begin{proposition}
 If there exists a nonzero defect measure, then there exists a nonconstant $C^{1,\alpha}$ $p$-harmonic map $v:S^p\to N$. As a corollary, if such a map does not exist then regularity of stationary $p$-harmonic maps improves.
\end{proposition}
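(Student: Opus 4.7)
My plan is to combine Proposition \ref{prop_nicelim} with a Sacks--Uhlenbeck style bubbling argument performed in the $p$-dimensional direction transverse to the defect, and then to realise the resulting bubble as a map from $S^p$ using the conformal invariance of the $p$-energy in dimension $p$ together with the removable-singularity Theorem \ref{th_rem}.

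First I would apply Proposition \ref{prop_nicelim}: $p$ is forced to be an integer, and one obtains a sequence $w_i \in \cH(\Lambda)$ with $w_i \wto \mathrm{const}$ in $W^{1,p}$ and $\abs{\nabla w_i}^p\,dV \wto c\,\cH^{m-p}|_L$, where $L \subset \R^m$ is an $(m-p)$-plane and $c > \epsilon$. After an orthogonal change of coordinates write $\R^m = L \times L^\perp$ with $L^\perp \cong \R^p$ and denote points by $(a,y)$. I would then perform a transverse rescaling: choose $\lambda_i \downarrow 0$, $a_i \to 0 \in L$ and $y_i \to 0 \in L^\perp$ such that the blow-ups
\[
 v_i(z) := w_i\ton{a_i,\; y_i + \lambda_i z}, \qquad z \in \R^p,
\]
have uniformly bounded $p$-energy on every $B_R(0)\subset \R^p$ and carry a fixed definite amount of $p$-energy on $B_1(0)$, say $\epsilon/2$. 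Such a choice of scales exists because the form of the defect measure forces $\abs{\nabla w_i}^p$ to put mass at least $c$ on every transverse slab around $L$ of positive thickness, and the $p$-energy is scale-invariant in the $p$-dimensional transverse direction. Viewing the $v_i$ as stationary $p$-harmonic maps from $\R^p$ into $N$ (stationarity being inherited from that of $w_i$ together with the concentration of derivatives in the $L^\perp$ directions), Theorem \ref{th_eps_stat} and Corollary \ref{cor_eps_stat} supply uniform $C^{1,\alpha}_{\mathrm{loc}}$ bounds away from a residual concentration set of small $(m-p)$-content. A diagonal subsequence then converges to a stationary $p$-harmonic $v:\R^p\to N$ which is $C^{1,\alpha}$ away from that set, satisfies $\int_{\R^p}\abs{\nabla v}^p \leq \Lambda$, and is nonconstant because of the normalisation of $\int_{B_1}\abs{\nabla v_i}^p$.

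To pass from $\R^p$ to $S^p$, I would use that in dimension $p$ the $p$-energy is conformally invariant: composing $v$ with the inverse stereographic projection $\sigma^{-1}:S^p\setminus\{\mathrm{pole}\}\to\R^p$ yields a stationary $p$-harmonic map $\tilde v:S^p\setminus\{\mathrm{pole}\}\to N$ with finite $p$-energy, and Theorem \ref{th_rem} extends $\tilde v$ to a $C^{1,\alpha}$ map on all of $S^p$, nonconstant by construction. The stated corollary is then immediate: if no such $v:S^p\to N$ exists, Proposition \ref{prop_nicelim} forces every defect measure to vanish, hence $W^{1,p}$-weakly convergent sequences of stationary $p$-harmonic maps converge strongly, and the entire argument of Section \ref{sec_minimizing} (cone-splitting, quantitative rigidity, energy decomposition built on top of $\epsilon$-regularity) transfers verbatim to the stationary setting, giving the Minkowski-type estimates and the sharp integrability of $\nabla u$.

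The hardest step will be the bubbling, which in the stationary case lacks the quasi-minimality that the minimizing case uses to guarantee strong convergence of the rescaled sequence. Without it, the $\{v_i\}$ might themselves carry a nontrivial defect, so that the limit $v$ absorbs only part of the normalising energy. I would resolve this iteratively: each time the $v_i$ fail to converge strongly, reapply Proposition \ref{prop_nicelim} to $\{v_i\}$ to extract a sub-bubble. Because $\cH(\Lambda)$ has uniformly bounded total $p$-energy and each extracted bubble uses up at least $\epsilon$ of it (the threshold in Theorem \ref{th_eps_stat}), the procedure must terminate in finitely many iterations with a strongly convergent blow-up whose limit is the desired nonconstant finite-energy stationary $p$-harmonic sphere.
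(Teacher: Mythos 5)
Your overall skeleton (blow up transverse to the defect plane $L$, get a finite-energy $p$-harmonic map on $\R^p$, then use conformal invariance in dimension $p$ plus Theorem \ref{th_rem} to close it up to $S^p$) matches the paper's endgame, but the central compactness step has a genuine gap, in two places. First, you define the rescaled maps by \emph{restriction to a $p$-dimensional slice}, $v_i(z)=w_i(a_i,\,y_i+\lambda_i z)$ with $z\in\R^p$, and assert that these are stationary $p$-harmonic maps on $\R^p$, ``stationarity being inherited from that of $w_i$ together with the concentration of derivatives in the $L^\perp$ directions.'' This does not work: stationarity is a statement about inner variations in all $m$ domain variables, and the trace of a stationary map on an affine $p$-plane satisfies no useful equation, even approximately. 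The information you actually have — that $\sum_k\int \abs{\nabla w_i}^{p-2}\abs{\partial w_i/\partial x^k}^2\to 0$ for the $L$-directions, which the paper extracts from the monotonicity formula applied at the $m-p+1$ points $x^0,\dots,x^{m-p}$ — is a weighted integral statement that vanishes only in the limit, so it cannot be used to promote slices of $w_i$ to solutions. The paper's fix is structural: it rescales in \emph{all} $m$ coordinates, $v_i(y)=w_i(Q_i+\delta_i y)$, so each $v_i$ remains an honest $m$-dimensional stationary map, and only the \emph{limit} $v$ is shown to be invariant along $L$ (by passing \eqref{3.5} to the limit), whence $v$ induces a map on $\R^p$.

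Second, your fallback for possible failure of strong convergence — iteratively reapplying Proposition \ref{prop_nicelim} to extract sub-bubbles until the procedure ``must terminate in finitely many iterations'' — is not justified: the energy-quantization count bounds the number of bubbles at one level but does not by itself rule out an infinite tower of concentrations at ever-smaller scales, and in any case Proposition \ref{prop_nicelim} is formulated for sequences of stationary maps on $m$-dimensional balls, not for your slice maps. The paper avoids iteration entirely by a maximal-scale selection: $\delta_i$ is chosen (equation \eqref{max}) so that the \emph{supremum over transverse translations} of the unit-scale normalized energy equals exactly $\epsilon_0/c(n)$ — critical but not supercritical. To convert this single maximum into smallness on \emph{every} unit ball of a large slab (which is what Corollary \ref{cor_eps_stat} needs), it introduces the localized energy $F_i(a)=\int\abs{\nabla v_i}^p(x+a)\phi(x)\,dV$ and uses the stationarity identity \eqref{eq_p-stat} together with \eqref{3.5} to show $\partial F_i/\partial a_k\to 0$ uniformly for the $L$-directions, i.e.\ the energy is asymptotically translation-invariant along $L$; a covering argument then places all unit balls below the $\epsilon$-regularity threshold, giving uniform $C^{1,\alpha}$ bounds, genuinely strong $C^{1,\alpha'}$ convergence, and non-constancy of $v$ since the normalized energy $\epsilon_0/c(n)$ survives the limit. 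Your choice of ``$\epsilon/2$ of energy on $B_1$'' controls only one ball at one scale and neither prevents concentration inside it nor propagates smallness to neighboring balls, so the convergence you need is exactly what is left unproved. Also note a small slip in the statement of your conclusion: the Hardy--Littlewood maximal function step in the paper is what selects the good slice $X_1^i$; your $a_i$ is chosen with no analogous control, so even the starting bounds \eqref{3.5} for your blow-ups are not available.
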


\begin{remark}
 As the referee pointed out to us, this proposition has already been proved in \cite{wang_changyou}, where the author studies limits of solutions to the generalized Ginzburg-Landau functional. Also in this article, the technique is based on \cite{lin_stat}. For the sake of completeness, here we present a similar proof.
\end{remark}

\begin{proof}
Let $w_i$ be one of the sequences of maps in $\cH(\Lambda)$ given by Proposition \ref{prop_nicelim}, such that $w_i\wto const$ and $\abs{\nabla w_i}^p dV \wto d\nu$, 
where $d\nu$ is a constant multiple of the $m-p$ Hausdorff measure on a $m-p$ subspace of $\R^m$ (say $\R^{m-p}\subset\R^{m-p}\times\R^p$). 
Let $x_0=0$ and $x^i$, $i=1,\cdots,m$  be the canonical basis for $\R^m$. 
Since the defect measure is a constant multiple of $\cH^{m-p}|_{\R^{m-p}}$, for all $0<r<R$ and $k=0,\dots,(m-p)$ it holds 
\begin{gather*}
\theta_\nu(x^k,r)=\theta_\nu(x^k,R). 
\end{gather*}
Accordingly, the monotonicity formula \eqref{prop_mono_stat} gives
\begin{gather}\label{eq_0}
 \lim_{i\to\infty} \theta_{w_i}(x^k,R)-\theta_{w_i}(x^k,r) =  \lim_{i\to\infty} p\int_{B_R(x^k)\setminus B_r(x^k)}\abs{y-x^k}^{p-m}\abs{\nabla w_i}^{p-2}\abs{\frac{\partial w_i}{\partial n_k}}^2 dV(y)=0,
\end{gather}
where $\partial_{n_k}$ is the exterior normal derivative with respect to the point $x^k$.

For any $k=1,\cdots,(m-p)$, it is easy to see that for all $f$:
\begin{gather*}
\frac{\partial f }{\partial x^k}(y)= \abs{y-x_0}\frac{\partial f }{\partial n_0 } u(y) - \abs{y-x^k} \frac{\partial f }{\partial n_k} u(y)\, .
\end{gather*}
Fix any $r>0$, then
\begin{gather*}
 \int_{B_1(0)}\abs{\nabla w_i}^{p-2}\abs{\frac{\partial w_i}{\partial x^k}}^2 dV = \int_{A_r}\abs{\nabla w_i}^{p-2}\abs{\frac{\partial w_i}{\partial x^k}}^2 dV +\int_{B_r(0)}\abs{\nabla w_i}^{p-2}\abs{\frac{\partial w_i}{\partial x^k}}^2 dV +\int_{B_r(x^k)}\abs{\nabla w_i}^{p-2}\abs{\frac{\partial w_i}{\partial x^k}}^2 \, ,
\end{gather*}
where $A_r = B_1(0)\setminus \left(B_r(0) \cup B_r(x^k)\right)$. As $i$ goes to infinity, the first integral converges to zero because by \eqref{eq_0}
\begin{gather*}
 \frac 1 2 \int_{A_r}\abs{\nabla w_i}^{p-2}\abs{\frac{\partial w_i}{\partial x^k}}^2 dV\leq \nonumber\\
 \leq \int_{B_2(0)\setminus B_r(0)}\abs{\nabla w_i}^{p-2}\abs{y-0}^2 \abs{\partial_{n_0} w_i}^2 dV + \int_{B_2(x^k)\setminus B_r(x^k)}\abs{\nabla w_i}^{p-2}\abs{y-x^k}^2 \abs{\partial_{n_k} w_i}^2 dV \to 0\, .
\end{gather*}
As for the second integral, we can estimate
\begin{gather*}
\nonumber \int_{B_r(0)}\abs{\nabla w_i}^{p-2}\abs{\frac{\partial w_i}{\partial x^k}}^2 dV \leq \int_{B_r(0)}\abs{\nabla w_i}^{p-2}\abs{y-0}^2 \abs{\partial_{n_0} w_i}^2 dV + \int_{B_r(0)}\abs{\nabla w_i}^{p-2}\abs{y-x^k}^2 \abs{\partial_{n_k} w_i}^2 dV \leq \\
 \leq r^2 \int_{B_r(0)} \abs{\nabla w_i}^2 + 4 \int_{B_2(x^k)\setminus B_r(x^k)}\abs{\nabla w_i}^{p-2}\abs{\partial_{n_k} w_i}^2 dV\, .
\end{gather*}
In a similar way, we can estimate the third integral. Since $r>0$ is arbitrary, we obtain that for every $k=1,\cdots,(m-p)$
\begin{gather}\label{3.1}
 \lim_{i\to \infty} \int_{B_1(0)}\abs{\nabla w_i}^{p-2}\abs{\frac{\partial w_i}{\partial x^k}}^2 dV =0\, .
\end{gather}

We now proceed as in in \cite[Lemma 3.1]{lin_stat}. Set $X_1=(x_1,\dots,x_{m-p})$, $X_2=(x_{m-p+1},\dots,x_m)$, and
\begin{gather*}
f_i(X_1)= \sum_{k=1}^{m-p}\int_{B^p(0,1/2)}\abs{\nabla w_i}^{p-2}\abs{\frac{\partial w_i}{\partial x^k}}^2(X_1,X_2)dX_2,
\end{gather*}
defined on $B^{m-p}(0,1/2)$. By \eqref{3.1} $f_i\to 0$ in $L^1(B^{m-p}(0,1/2))$. 
Theorem \ref{th_statreg} ensures that $w_i$ is $C^{1,\alpha}$ in a neighborhood of $\{X_1\}\times B^p(0,1/2)$ for $\cH^{m-p}$-a.e. point $X_1\in B^{m-p}(0,1/2)$. In particular we can choose a sequence $\{X_1^i\}_{i=1}^\infty$ of such points. 
The weak-$L^1$ estimate for the Hardy-Littlewood maximal function says that
\begin{gather*}
\abs{\left\{\sup_{r>0}\frac{1}{\abs{B^{m-p}(X_1^i,r)}}\int_{B^{m-p}(X_1^i,r)}f_i(X_1)dX_1>\lambda\right\}}< \frac{C(m-p)}{\lambda}\left\|f_i\right\|_{L^1(B^{m-p}(0,1/2))}
\end{gather*}
for all positive $\lambda$. Then
\begin{gather}\label{3.3}
\sup_{r>0}r^{p-m}\int_{B^{m-p}(X_1^i,r)}f_i(X_1)dX_1\to 0,\ \textrm{as }i\to\infty.
\end{gather}
Let $\epsilon_0>0$ be such that Corollary \ref{cor_eps_stat} works on $B_3(0)$ with $r=3/2$ and let $c(n)$ be a dimensional constant chosen in such a way that $B^{m-p}(0,3)\times B^p(0,3)$ can be covered with $c(n)/2$ balls of radius $1/2$. Fix $\delta>0$. 
Since there exists a nonzero defect measure, then $\abs{\nabla w_i}$ can not be uniformly bounded on $B^{m-p}(X_1^i,\delta/2)\times B^{p}(0,\delta)$. 
Hence by Corollary \ref{cor_eps_stat}
\begin{gather*}
\max_{X_2\in B^p(0,1/2)}\delta^{p-m}\int_{B^{m-p}(X_1^i,\delta)\times B^{p}(X_2,\delta)}\abs{\nabla w_i}^p dV \geq\epsilon_0,
\end{gather*}
for all $i$ large enough. On the other hand, since $w_i$ is $C^{1,\alpha}$ in a neighborhood of $\{X_1^i\}\times B^p(0,1/2)$, the $\epsilon$-regularity gives that
for every $i$ there exists $\delta(i)>0$ such that
\begin{gather*}
\delta^{p-m}\int_{B^{m-p}(X_1^i,\delta)\times B^{p}(X_2,\delta)}\abs{\nabla w_i}^p dV \leq \frac{\epsilon_0}{2c(n)},\ \forall 0<\delta<\delta(i),\ \forall X_2\in B^p(0,1/2).
\end{gather*}
Then for $i$ large enough we can find a sequence $\{\delta_i\}$ of positive numbers, $\delta_i\to 0$ as $i\to\infty$, such that 
\begin{gather}\label{max}
\max_{X_2\in B^p(0,1/2)}\delta_i^{p-m}\int_{B^{m-p}(X_1^i,\delta_i)\times B^{p}(X_2,\delta_i)}\abs{\nabla w_i}^p dV =\frac{\epsilon_0}{c(n)}.
\end{gather}
Moreover the maximum is achieved at some $X_2^i\in B^p(0,1/4)$, since otherwise for all $i$ large enough (such that $\delta_i<1/8$),
\begin{gather*}
\int_{B^{m-p}(0,1)\times (B^{p}(0,1/2)\setminus B^p(0,1/8))}\abs{\nabla w_i}^p dV\geq C(n,p,\epsilon_0)>0,
\end{gather*}
contradicting the assumption that $w_i\to const$ in $C^{1,\alpha}(B^{m-p}(0,1)\times (B^{p}(0,1/2)\setminus B^p(0,1/8)))$.\\
Now, set $Q_i=(X_1^i, X_2^i)$, $R_i=1/(4\delta_i)$ (so that $R_i\to\infty$ as $i\to\infty$) and define the $p$-stationary maps $v_i(y)=w_i(Q_i+\delta_iy)$ on $B^{m-p}(0,R_i)\times B^{m-p}(0,R_i)$. The convergence in \eqref{3.3} can be read as 
\begin{gather}\label{3.5}
V_i:=\sup_{0<R<2R_i}R^{p-m}\int_{B^{m-p}(0,R)\times B^p(0,2R_i)}\sum_{k=1}^{m-p}\abs{\nabla v_i}^{p-2}\abs{\frac{\partial v_i}{\partial x^k}}^2 dV\to 0,\ \textrm{as }i\to\infty.
\end{gather}
From \eqref{max} we deduce that
\begin{gather}\label{3.6}
\int_{B^{m-p}(0,1)\times B^{p}(0,1)}\abs{\nabla v_i}^p dV = \frac{\epsilon_0}{c(n)}\\
=\max_{Y_2\in B^p(-4R_iX_2^i,2R_i)} \int_{B^{m-p}(0,1)\times B^{p}(Y_2,1)}\abs{\nabla v_i}^p dV
=\max_{Y_2\in B^p(0,R_i-1)} \int_{B^{m-p}(0,1)\times B^{p}(Y_2,1)}\abs{\nabla v_i}^p dV
\end{gather}
Finally, since $w_i\in \cH(\Lambda)$ for all $i$, then for every $0<R<R_i$,
\begin{gather}\label{3.7}
\sup_i \int_{B^{m-p}(0,R)\times B^p(0,R)}\abs{\nabla v_i}^p dV \leq \Lambda R^{m-p}.
\end{gather}
Since $R_i$ is increasing, this latter ensures that, for every positive $R$, up to extract a subsequence $v_i$ weakly converges in $W^{1,p}$ on $B^{m-p}(0,R)\times B^p(0,R)$. 
Hence by a diagonalisation process we can find a map $v\in W^{1,p}_{loc}(\R^m,N)$ such that, up to extract a subsequence, $v_i\wto v$ in $W^{1,p}(B^{m-p}(0,R)\times B^p(0,R))$ for all $R>0$. 
Moreover, thanks to the lower semicontinuity of the $p$-energy
\begin{gather}\label{eq_fin_en}
\int_{B^{m-p}(0,R)\times B^p(0,R)}\abs{\nabla v}^p dV \leq \Lambda R^{m-p} \ \forall\, R.
\end{gather}
Let $\phi\in C^\infty_c(B^{m-p}(0,1)\times B^p(0,1))$ such that $0\leq\phi\leq 1$, $\phi\equiv 1$ in $B^{m-p}(0,3/4)\times B^p(0,1/2)$ and $\abs{\nabla\phi}<8$. Set
\begin{gather*}
F_i(a)=\int_{B^{m-p}(0,1)\times B^p(0,1)}\abs{\nabla v_i}^p(x+a)\phi(x) dV(x),
\end{gather*}
for $a\in B^{m-p}(0,3)\times B^p(0,R_i-1)$. The divergence formula \eqref{eq_p-stat}, H\"older inequality, \eqref{3.6} and \eqref{3.5}  give that
\begin{gather*}
\abs{\frac{\partial F_i}{\partial a_k}} = \abs{\int_{B^{m-p}(0,1)\times B^p(0,1)}\frac{\partial}{\partial x^k}\abs{\nabla v_i}^p(x+a)\phi(x) dV(x) }
\\=p\abs{\int_{B^{m-p}(0,1)\times B^p(0,1)}\abs{\nabla v_i}^{p-2}(x+a)\nabla_l v_i(x+a)\nabla_k v_i(x+a)\nabla^l \phi(x) dV(x) }
\\\leq 8p \int_{B^{m-p}(0,1)\times B^p(0,1)}\abs{\nabla v_i}^p(x+a) dV \int_{B^{m-p}(0,1)\times B^p(0,1)}\sum_{k=1}^{m-p}\abs{\nabla v_i}^{p-2}(x+a)\abs{\frac{\partial v_i}{\partial x^k}}^2(x+a) dV \to 0,\ \textrm{as }i\to\infty,
\end{gather*}
uniformly on compact sets, for each $k=1,\dots,(m-p)$. Then, for $i$ large enough 
\begin{gather*}
\int_{B^{n}(a,1/2)}\abs{\nabla v_i}^p(x) dV(x)\leq 2F_i(0)\leq\frac{2\epsilon_0}{c(n)},\ \forall\,a\in B^{m-p}(0,3)\times B^p(0,3),
\end{gather*}
and by the choice of $c(n)$ 
\begin{gather*}
\int_{B^{m-p}(0,3)\times B^p(0,3),}\abs{\nabla v_i}^p(Y_1,Y_2+b) dV(Y,_1,Y_2)\leq\epsilon_0,\ \forall\,b\in B^{p}(0,R_i-3).
\end{gather*}
Hence Corollary \ref{cor_eps_stat} yields that for all positive $R$, as $i\to\infty$, $v_i\to v$ up to a subsequence in $C^{1,\alpha'}(B^{n_p}(0,3/2)\times B^p(0,R))$. 
The limit map $v$ is a $C^{1,\alpha'}$ $p$-harmonic map defined on $B^{m-p}(0,3/2)\times\R^p$ which is non-constant since by strong convergence
\begin{gather*}
\int_{B^{m-p}(0,1)\times B^p(0,1)}\abs{\nabla v}^p(x) dV(x) = \frac{\epsilon_0}{c(n)}.
\end{gather*}
Moreover taking limits in \eqref{3.5} and \eqref{3.7} it is clear that
\begin{gather*}
\int_{B^{m-p}(0,R)\times\R^p}\sum_{k=1}^{m-p}\abs{\nabla v}^{p-2}\abs{\frac{\partial v}{\partial x^k}}^2 dV =0,
\end{gather*}
i.e., $v$ induces a nonconstant $C^{1,\alpha'}$ $p$-harmonic maps from $\R^p$ to $N$ which, thanks to \eqref{eq_fin_en}, has finite $p$-energy. 
By a conformal change, $v$ can be seen as a nonconstant, $C^{1,\alpha}$ $p$-harmonic map from $S^p\setminus {0}$ to $N$ with finite $p$-energy. Given the removable singularity theorem \ref{th_rem}, $v$ is a $C^{1,\alpha}$ $p$-harmonic map from the entire $S^p$ into $N$.
\end{proof}

\subsection{Regularity estimates}
As we have seen, an important difference between stationary and minimizing maps is that a weakly convergent sequence of stationary maps need not converge strongly, while this is true in the minimizing case. However, by analyzing the defect measure, we have concluded that
\begin{lemma}
 Let $u_i$ be a $W^{1,p}$ weakly convergent sequence of stationary $p$-harmonic maps $u_i:B_2(0)\to N$, where $N$ is a compact homogeneous space with a left invariant metric. If $p$ is not an integer, or if there are no nonconstant $C^1$ stationary $p$-harmonic maps from $S^p\to N$, then $u_i$ converges strongly to its limit, which is a stationary $p$-harmonic map.
\end{lemma}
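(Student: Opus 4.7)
The plan is to reduce the lemma to the structural results on the defect measure already established in the previous sections, and then to pass to the limit in the first variation formula.

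First I would set up the defect measure. Given the weakly convergent sequence $f_i \wto f$, the sequence has uniformly bounded $p$-energy on compact subsets of $B_2(0)$, so, after extracting a subsequence, we can write
\begin{gather}
 \abs{\nabla f_i}^p dV \wto d\mu = \abs{\nabla f}^p dV + d\nu
\end{gather}
in the weak sense of Radon measures on $B_2(0)$, where $\nu \geq 0$ is the associated defect measure. Standard uniform convexity of $L^p$ for $1<p<\infty$ implies that $\nabla f_i \to \nabla f$ strongly in $L^p_{loc}(B_2(0))$ if and only if $\nu \equiv 0$. So the entire task is to exclude $\nu \neq 0$.

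Suppose by contradiction that $\nu$ is nontrivial. Then Proposition \ref{prop_nicelim} applies and forces $p$ to be an integer; this already rules out the first alternative in the hypothesis. Under the second alternative (no nonconstant $C^1$ stationary $p$-harmonic map $S^p \to N$), Proposition \ref{prop_nicelim} together with the subsequent proposition on $p$-harmonic spheres produces a nonconstant $C^{1,\alpha}$ stationary $p$-harmonic map $v \colon S^p \to N$, again contradicting our assumption. Hence $\nu \equiv 0$, and consequently $\nabla f_i \to \nabla f$ strongly in $L^p_{loc}(B_2(0))$.

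It remains to check that the limit $f$ is stationary. Fix any $\xi \in C^\infty_c(B_2(0), \R^m)$. By strong $L^p_{loc}$ convergence of the gradients and a.e.\ convergence of a further subsequence, the nonlinear expression $\abs{\nabla f_i}^{p-2}[\abs{\nabla f_i}^2 \delta^i_j - p\, \nabla^i f_i \nabla_j f_i]$ converges strongly in $L^{p/(p-1)}_{loc}$ to the analogous expression for $f$. Passing to the limit in the stationary identity \eqref{eq_p-stat} for $f_i$ against $\xi$ yields the same identity for $f$, so $f$ is stationary. The same strong convergence allows us to pass to the limit in the weak Euler--Lagrange equation $\Delta_p f_i = -\abs{\nabla f_i}^{p-2} II(f_i)(\nabla f_i, \nabla f_i)$ against any smooth test map, showing that $f$ is also weakly $p$-harmonic. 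The main (already-paid) cost of the argument is the defect-measure analysis; once $\nu = 0$ is in hand, the strong convergence and stationarity follow from essentially routine limiting.
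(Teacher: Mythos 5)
Your proof is correct and follows essentially the same route as the paper, which states this lemma as a direct consequence of the defect-measure analysis (Proposition \ref{prop_nicelim} together with the subsequent $p$-harmonic sphere proposition); your Radon--Riesz argument for equivalence of strong convergence with $\nu\equiv 0$ and the passage to the limit in \eqref{eq_p-stat} are routine steps the paper leaves implicit. One small slip: the stress tensor $\abs{\nabla f_i}^{p-2}\qua{\abs{\nabla f_i}^2\delta^i_j - p\,\nabla^i f_i\nabla_j f_i}$ has modulus comparable to $\abs{\nabla f_i}^p$ and therefore converges only in $L^1_{loc}$, not $L^{p/(p-1)}_{loc}$ (it is the vector field $\abs{\nabla f_i}^{p-2}\nabla f_i$ that lies in $L^{p/(p-1)}$), but $L^1_{loc}$ convergence still suffices to pass to the limit against the bounded test field $\partial_i\xi^j$, so the argument is unaffected.
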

This lemma allows us to reproduce all the results studied in the minimizing case, in particular Propositions \ref{prop_conespl}, \ref{th_qrig}, \ref{th_eps_+}, \ref{cor_eps_+}. Thus, under these assumptions, stationary $p$-harmonic maps enjoy the same regularity properties of minimizing maps.
\begin{theorem}
 Let $u:B_2(0)\to N$ be a stationary $p$-harmonic map, where $N$ is a smooth compact homogeneous space with a left invariant metric. If $p$ is not an integer, then for all $\epsilon>0$:
\begin{gather*}
  \Vol\ton{B_r(\cS(u))\cap B_1(0)}\leq \Vol\ton{B_r(\cB_r(u))\cap B_1(0)}\leq C(m,N,p,\epsilon) r^{\floor p +1-\epsilon}\, .
\end{gather*} 
Moreover, for any $p$ under the additional assumption \eqref{B}, we can improve the previous estimate to
\begin{gather*}
  \Vol\ton{B_r(\cS(u))\cap B_1(0)}\leq \Vol\ton{B_r(\cB_r(u))\cap B_1(0)}\leq C r^{b+2-\eta}\, .
 \end{gather*}  
\end{theorem}

As in the minimizing case, we get the following sharp integrability results.
\begin{corollary}
Under the hypothesis of the previous theorem, if $p$ is not an integer then for all $\epsilon>0$, $\nabla u\in L^{\floor{p}+1-\epsilon}(B_1(0))$ with
 \begin{gather*}
  \int_{B_1(0)} \abs{\nabla u}^{\floor{p}+1-\epsilon}\leq C(m,\Lambda,N,p,\epsilon)\, .
 \end{gather*}
Moreover, for all $p$ and under the additional assumption \eqref{B}, $\nabla u\in L^{b+2-\epsilon}(B_1(0))$ with
 \begin{gather*}
  \int_{B_1(0)} \abs{\nabla u}^{b+2-\epsilon}\leq C(m,\Lambda,N,p,\epsilon)\, .
 \end{gather*}
\end{corollary}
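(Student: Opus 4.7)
The proof is the standard distribution-function (layer-cake) argument that converts the Minkowski volume bound on $\cB_r(f)$ into an integrability statement for $|\nabla f|$. I will treat the two cases uniformly by writing $\beta$ for the relevant exponent, where $\beta = \floor p + 1$ under the hypothesis that $p$ is not an integer, and $\beta = b+2$ under assumption \eqref{B}.

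The first step is to use the regularity scale to compare the superlevel sets of $|\nabla f|$ with the sets $\cB_r(f)$. Since $\|f\|_{x,r} \geq r |\nabla f(x)|$ whenever $f \in C^{1,\alpha}(B_r(x))$, the inequality $|\nabla f(x)| > t$ forces $\|f\|_{x, 1/t} > 1$, hence $r_f(x) \leq 1/t$. Consequently, for every $t > 0$,
$$
\gr{x \in B_1(0) \ \ s.t. \ \ |\nabla f(x)| > t} \subset \cB_{2/t}(f) \cap B_1(0).
$$
Combining this with the Minkowski estimate from the previous theorem (applied with a small parameter $\eta > 0$, to be chosen in terms of $\epsilon$) yields
$$
\Vol\bigl(\gr{|\nabla f| > t} \cap B_1(0)\bigr) \leq C(m, N, \Lambda, p, \eta)\, t^{-(\beta - \eta)}
$$
for every $t \geq 1$.

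The second step is the layer-cake formula. Fix $\epsilon > 0$, set $q = \beta - \epsilon$, and choose $\eta = \epsilon/2$. Then
$$
\int_{B_1(0)} |\nabla f|^q \, dV = q \int_0^\infty t^{q-1}\, \Vol\bigl(\gr{|\nabla f| > t} \cap B_1(0)\bigr)\, dt.
$$
Split this integral at $t = 1$. On $[0,1]$ we bound the measure trivially by $\Vol(B_1(0))$, which gives a finite contribution depending only on $m$ and $q$. On $[1, \infty)$ the volume estimate makes the integrand bounded by $C t^{q-1-(\beta - \eta)} = C t^{-1-\epsilon/2}$, which is integrable at infinity. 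Summing the two contributions gives the desired bound with a constant depending only on $m, N, \Lambda, p, \epsilon$.

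There is no serious obstacle: the argument is purely mechanical once the preceding Minkowski estimate is in place. The only points to track carefully are the relationship $r_f(x) \leq 1/t$ on $\gr{|\nabla f| > t}$, and the choice $\eta = \epsilon/2$ which provides a strictly integrable tail. The estimate cannot be pushed to $q = \beta$ itself, as is illustrated by the example $f(x) = x/|x|$ mentioned in the minimizing section, for which $|\nabla f| \notin L^m$ even though $f$ is $p$-harmonic for all $p<m$.
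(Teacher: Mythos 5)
Your proof is correct and is precisely the argument the paper leaves implicit when it declares the corollary ``immediate'' from the preceding volume estimate: the inclusion $\cur{\abs{\nabla f}>t}\subset \cB_{2/t}(f)$ forced by the definition of the regularity scale, followed by the layer-cake integration of the Minkowski bound with $\eta=\epsilon/2$ to get an integrable tail $t^{-1-\epsilon/2}$. This is the same mechanism used in \cite{ChNa2}, so there is nothing to add beyond noting that your sharpness remark via $f(x)=x/\abs{x}$ also matches the paper's own discussion.
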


Also the improved covering arguments of Section \ref{sec_isol_sing} carry over immediately to the stationary case.
\begin{theorem}
Under the hypothesis of the previous theorem, suppose that $p$ is not an integer and $m=\floor{p}+1$, or that $m=b+2$ under the additional assumption \eqref{B}. Let $u$ be a stationary $p$-harmonic map $u:B_2(0)\to N$, where 
 \begin{gather*}
  \int_{B_2(0)}\abs{\nabla u}^p dV \leq \Lambda\, .
 \end{gather*}
Then
 \begin{gather*}
  \# \cS(u)\cap B_{1}(0)\leq C(p,\Lambda,N)\, .
 \end{gather*}
\end{theorem}

\subsection*{Acknowledgements.}The first author has been supported by NSF grant DMS-1406259. The second author has been supported by SNSF projects 149539 and 157452. The second and third authors have been partially supported by the Gruppo Nazionale per l'Analisi Matematica, la Probabilit\`a e le loro Applicazioni (GNAMPA) of the Istituto Nazionale di Alta Matematica (INdAM). 

\bibliographystyle{aomalpha}
\bibliography{p-bib}

\end{document}